\renewcommand{\PrintDOI}[1]{\doi{#1}}
\def\nlabel{\upshape({\itshape \arabic*\,})} 
\newtheorem{theorem}{Theorem}[section]
\newtheorem{lemma}[theorem]{Lemma}
\newtheorem{proposition}[theorem]{Proposition}
\newtheorem{fact}[theorem]{Fact}
\newtheorem{corollary}[theorem]{Corollary}
\newtheorem{conjecture}[theorem]{Conjecture}
\newcommand{\oldqed}{}
\def\endofClaim{\hfill\scalebox{.6}{$\Box$}}
\newcommand{\cF}{\mathcal{F}}
\newcommand{\cP}{\mathcal{P}}
\newcommand{\cK}{\mathcal{K}}
\newcommand{\cI}{\mathcal{I}}
\newcommand{\cT}{\mathcal{T}}
\newcommand{\PP}{\mathbb{P}}
\newcommand{\RR}{\mathbb{R}}
\newcommand{\Bin}{\mathrm{Bin}}
\newcommand{\EE}{\mathbb{E}}
\newcommand{\eps}{\varepsilon}
\newcommand{\cC}{\mathcal{C}}
\newcommand{\upth}{^{\rm th}}
\newcommand{\upst}{^{\rm st}}
\let\subset\subseteq
\DeclareMathOperator{\polylog}{{\rm polylog}}
\newcommand*\patchAmsMathEnvironmentForLineno[1]{%
\expandafter\let\csname old#1\expandafter\endcsname\csname #1\endcsname
\expandafter\let\csname oldend#1\expandafter\endcsname\csname end#1\endcsname
\renewenvironment{#1}%
{\linenomath\csname old#1\endcsname}%
{\csname oldend#1\endcsname\endlinenomath}}%
\newcommand*\patchBothAmsMathEnvironmentsForLineno[1]{%
\patchAmsMathEnvironmentForLineno{#1}%
\patchAmsMathEnvironmentForLineno{#1*}}%
\title{Near-perfect clique-factors in sparse pseudorandom
  graphs}
\author[J. Han]{Jie Han}
\author[Y. Kohayakawa]{Yoshiharu Kohayakawa}
\author[Y. Person]{Yury Person}
\thanks{%
  JH~is supported by FAPESP
  (2014/18641-5, 2013/03447-6).  YK is partially supported by FAPESP
  (2013/03447-6) and CNPq (310974/2013-5, 459335/2014-6).  YP was
  supported by DFG grant PE 2299/1-1.  The cooperation of the authors
  was supported by a joint CAPES-DAAD PROBRAL project (Proj.\
  no.~430/15, 57350402, 57391197).  }
\address{Instituto de Matem\'atica e Estat\'{\i}stica, Universidade de
    S\~ao Paulo, S\~ao Paulo, Brazil}
\email{\{jhan|yoshi\}@ime.usp.br}
\address{Institut f\"ur Mathematik, Technische Universit\"at Ilmenau, 98684 Ilmenau, Germany}
\email{yury.person@tu-ilmenau.de}
\begin{document}
 \onehalfspace
\shortdate
\yyyymmdddate
\settimeformat{ampmtime}
\date{\today, \currenttime}
\footskip=28pt
\allowdisplaybreaks

\begin{abstract}
  We prove that, for any $t\ge 3$, there exists a constant $c=c(t)>0$
  such that any $d$-regular $n$-vertex graph with the second largest
  eigenvalue in absolute value~$\lambda$ satisfying
  $\lambda\le c d^{t-1}/n^{t-2}$ contains vertex-disjoint copies of $K_t$ covering all but at most
  $n^{1-1/(8t^4)}$ vertices.  This provides further support for
  the conjecture of Krivelevich, Sudakov and Sz\'abo [\emph{Triangle
    factors in sparse pseudo-random graphs}, Combinatorica \textbf{24}
  (2004), pp.~403--426] that $(n,d,\lambda)$-graphs with
  $n\in 3\mathbb{N}$ and $\lambda\leq cd^{2}/n$ for a suitably small
  absolute constant~$c>0$ contain triangle-factors.
\end{abstract}

\maketitle

\section{Introduction}
\label{sec:intro}
The study of conditions under which certain spanning or almost
spanning structures are forced in random or pseudorandom graphs is one
of the central topics in extremal graph theory and in random graphs.
 
An $(n,d,\lambda)$-graph is an $n$-vertex $d$-regular graph whose
second largest eigenvalue in absolute value is at most~$\lambda$.
Graphs with $\lambda\ll d$ are considered to be \textit{pseudorandom},
i.e., they behave in certain respects as random graphs do; for
example, the edge count between `not too small' vertex subsets is
close to what one sees in random graphs of the same density.  As
usual, let $e(A,B)=e_G(A,B)$ denote the number of pairs
$(a,b)\in A\times B$ so that $ab$ is an edge of~$G$ (note that edges
in $A\cap B$ are counted twice).  The following result makes what we
discussed above precise.

\begin{theorem}[Expander mixing lemma~\cite{AC88}]
  \label{thm:EML}
  If $G$ is an $(n,d,\lambda)$-graph and $A$, $B\subseteq V(G)$, then
  \begin{equation}
    \left|e(A,B)-\frac{d}{n}|A||B|\right|<\lambda\sqrt{|A||B|}.\label{eq:EML}
  \end{equation}
\end{theorem}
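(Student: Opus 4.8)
The plan is the classical spectral argument: realise $e(A,B)$ as a bilinear form in the adjacency matrix and split off the contribution of the trivial eigenvalue~$d$. Let $M$ be the adjacency matrix of~$G$. Since $M$ is real symmetric it admits an orthonormal eigenbasis $v_1,\dots,v_n$ with real eigenvalues $\lambda_1\ge\dots\ge\lambda_n$. Because $G$ is $d$-regular, the all-ones vector is an eigenvector with eigenvalue~$d$; and inspecting a coordinate of maximum absolute value in an arbitrary eigenvector shows that every eigenvalue has absolute value at most~$d$. Hence $\lambda_1=d$ and we may take $v_1=\mathbf 1/\sqrt n$, and by the definition of an $(n,d,\lambda)$-graph $|\lambda_i|\le\lambda$ for every $i\ge2$.

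Next, write $\mathbf 1_A,\mathbf 1_B\in\{0,1\}^{V(G)}$ for the characteristic vectors of $A$ and~$B$; straight from the definition of $e(A,B)$ as a count of ordered pairs of $A\times B$ spanning an edge, $e(A,B)=\mathbf 1_A^{\mathsf T}M\mathbf 1_B$. I would then expand $\mathbf 1_A=\sum_i\alpha_iv_i$ and $\mathbf 1_B=\sum_i\beta_iv_i$. Taking inner products with $v_1$ gives $\alpha_1=|A|/\sqrt n$ and $\beta_1=|B|/\sqrt n$, while orthonormality (Parseval) gives $\sum_i\alpha_i^2=\|\mathbf 1_A\|^2=|A|$ and $\sum_i\beta_i^2=|B|$. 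Using $Mv_i=\lambda_iv_i$ and the orthonormality of the $v_i$,
\[
  e(A,B)=\sum_{i=1}^n\lambda_i\alpha_i\beta_i
  =\lambda_1\alpha_1\beta_1+\sum_{i=2}^n\lambda_i\alpha_i\beta_i
  =\frac dn\,|A||B|+\sum_{i=2}^n\lambda_i\alpha_i\beta_i .
\]

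It remains only to bound the error term $\sum_{i\ge2}\lambda_i\alpha_i\beta_i$. By the triangle inequality, $|\lambda_i|\le\lambda$ for $i\ge2$, and the Cauchy--Schwarz inequality,
\[
  \Bigl|\sum_{i=2}^n\lambda_i\alpha_i\beta_i\Bigr|
  \le\lambda\sum_{i=2}^n|\alpha_i|\,|\beta_i|
  \le\lambda\Bigl(\sum_{i=2}^n\alpha_i^2\Bigr)^{1/2}\Bigl(\sum_{i=2}^n\beta_i^2\Bigr)^{1/2},
\]
and $\sum_{i\ge2}\alpha_i^2=|A|-\alpha_1^2=|A|(1-|A|/n)$, and likewise for~$B$. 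This yields the sharper bound $\lambda\sqrt{|A||B|(1-|A|/n)(1-|B|/n)}$, which is $\le\lambda\sqrt{|A||B|}$ and strict once $A$ and $B$ are nonempty, giving~\eqref{eq:EML}. Honestly I do not expect a genuine obstacle here: the argument is routine linear algebra, and the only points that call for a moment's care are the identification $\lambda_1=d$ together with the choice $v_1=\mathbf 1/\sqrt n$, and keeping track of the factors $1-|A|/n$ and $1-|B|/n$ that make the inequality strict.
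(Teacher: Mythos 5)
Your proof is correct, and it is the standard spectral argument that appears in the cited reference of Alon and Chung; the paper itself states Theorem~\ref{thm:EML} as a cited black box and gives no proof, so there is nothing in the paper to compare against. Your identification $e(A,B)=\mathbf 1_A^{\mathsf T}M\mathbf 1_B$ matches the paper's convention of counting ordered pairs (edges inside $A\cap B$ counted twice), the decomposition along $v_1=\mathbf 1/\sqrt n$ is sound, and your Cauchy--Schwarz step correctly produces the sharper bound $\lambda\sqrt{|A||B|(1-|A|/n)(1-|B|/n)}$, from which~\eqref{eq:EML} follows (with strictness holding in the nondegenerate cases $|A|,|B|\ge1$ and $\lambda>0$, which is all the applications ever need).
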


As starting points to the extensive literature on pseudorandom graphs,
the reader is refereed to, e.g., \cite{KS06}, \cite{CFZ12}
or~\cite[Chapter~9]{AS16}.

It is an interesting problem to understand optimal or asymptotically
optimal conditions on the parameter~$\lambda$ in terms of~$d$ and~$n$
that force an $(n,d,\lambda)$-graph to possess a desired property.  To
demonstrate the optimality of a condition, one needs to show the
existence of an $(n,d,\lambda)$-graph that certifies that the
condition is indeed optimal.

Unfortunately, there are very few examples certifying optimality.  A
celebrated example is due to Alon, who showed~\cite{Alon94} that there
are $(n,d,\lambda)$-graphs that are $K_3$-free and yet satisfy
$\lambda=cd^2/n$ for some absolute constant~$c>0$.  This is in
contrast with the fact that, as it follows easily from the expander
mixing lemma above, for, say, $\lambda\le 0.1 d^2/n$, any
$(n,d,\lambda)$-graph contains a triangle (in fact, every vertex lies
in a triangle).  It turns out that $(n,d,\lambda)$-graphs with
$\lambda=\Theta(d^2/n)$ must satisfy $d=\Omega(n^{2/3})$.  The
construction of Alon~\cite{Alon94} provides an example of the
essentially sparsest possible $K_3$-free $(n,d,\lambda)$-graph with
$d=\Theta(n^{2/3})$ and $\lambda=\Theta(n^{1/3})$. The other known example 
is a generalization of this construction by Alon and Kahale~\cite{AK98} (see also~\cite[Section~3]{KS06}) to graphs without 
odd cycles of length at most $2\ell+1$.

Our focus here is on spanning or almost spanning structures in
$(n,d,\lambda)$-graphs.  One of the simplest spanning structures is
that of a perfect matching. Alon, Krivelevich and Sudakov
(see~\cite{KS06}) proved that $(n,d,\lambda)$-graphs with
$\lambda\le d-2$ and~$n$ even contain perfect matchings.
\textit{Factors} generalize perfect matchings: for a graph~$F$, an
\emph{$F$-factor} in a graph~$G$ is a collection of vertex-disjoint
copies of~$F$ in~$G$ whose vertex sets cover~$V(G)$ (this requires
that $v(G):=|V(G)|$ should be divisible by $v(F)$).  Motivated by the
study of spanning structures in graphs, Krivelevich, Sudakov and
Szab\'o~\cite{KSS04} proved that $(n,d,\lambda)$-graphs with
$\lambda=o\left(d^3/(n^2\log n)\right)$ contain a triangle-factor
if~$3\mid n$.
 
A \textit{fractional} triangle-factor in a graph $G=(V,E)$ is a
non-negative weight function $f$ on the set $\cK_3(G)$ of all
triangles $T$ of $G$, such that, for every~$v\in V$, we have
$\sum_{T\colon v\in V(T)}f(T)=1$.  Krivelevich, Sudakov and Szab\'o
further proved~\cite{KSS04} that $(n,d,\lambda)$-graphs with
$\lambda\le 0.1 d^2/n$ admit a fractional triangle-factor.  Moreover,
they conjectured the following.
 
\begin{conjecture}[Conjecture~7.1 in~\cite{KSS04}]
  \label{conj:KSS}
  There exists an absolute constant $c>0$ such that if
  $\lambda\le cd^2/n$, then every $(n,d,\lambda)$-graph~$G$ on
  $n\in 3\mathbb{N}$ vertices has a triangle-factor.
\end{conjecture}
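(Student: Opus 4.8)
The plan is to deduce Conjecture~\ref{conj:KSS} from our main theorem by the \emph{absorption method}: set aside in advance a small but highly flexible ``absorbing set'' $A\subseteq V(G)$, use our main theorem in the case $t=3$ to cover $V(G)\setminus A$ by vertex-disjoint triangles up to a tiny leftover $W$, and then complete the factor inside $G[A\cup W]$ using the flexibility engineered into $A$. The point is that our main theorem already leaves only a \emph{polynomially small} deficiency --- at most $n^{1-1/648}$ uncovered vertices when $t=3$ --- so the absorbing set need only cope with $n^{1-\Omega(1)}$ vertices rather than a linear number; this is exactly the kind of slack one would hope lets the hypothesis be relaxed all the way down to $\lambda\le cd^2/n$, in contrast with the reservoir argument of Krivelevich--Sudakov--Szab\'o, whose sublinear fix-up forces the stronger bound $\lambda=o(d^3/(n^2\log n))$.

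Fix a small $\delta\in(0,1/648)$. In a first step I would construct, inside $G$, a vertex set $A$ with $3\mid |A|$ and $|A|=\Theta(n^{1-\delta})$, together with a family $\cB$ of pairwise vertex-disjoint constant-size \emph{absorber gadgets} contained in $A$, with the following property: for every $W\subseteq V(G)\setminus A$ with $|W|\le n^{1-1/648}$, the induced subgraph $G[A\cup W]$ has a triangle-factor. (No divisibility condition on $W$ is needed: since $3\mid n$ and $3\mid |A|$, and $W$ is what remains of $V(G)\setminus A$ after deleting vertex-disjoint triangles, automatically $3\mid |A\cup W|$.) Following Montgomery's distributive absorption, the gadgets in $\cB$ should be wired together through an auxiliary bipartite ``template'' whose near-perfect matchings allow any sufficiently small set of leftover vertices to be matched to distinct gadgets that absorb them into a local re-tiling; the existence of the required matchings should in turn be read off from the \emph{fractional} triangle-factor of Krivelevich--Sudakov--Szab\'o, which is available already under $\lambda\le 0.1 d^2/n$.

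Granting Step~1, put $G':=G-A$. Since $A$ is chosen at random and $|A|=\Theta(n^{1-\delta})$, with high probability every vertex of $G'$ has degree $(1-o(1))d$, and by the expander mixing lemma (Theorem~\ref{thm:EML}) applied inside $V(G')$ the graph $G'$ satisfies the same discrepancy bound as $G$ with $\lambda$ replaced by $(1+o(1))\lambda$; a routine adaptation of the proof of our main theorem then applies to such an almost-regular pseudorandom graph, giving vertex-disjoint triangles in $G'$ covering all but a set $W$ with $|W|\le n^{1-1/648}$. (Alternatively, one may interleave this with a bounded-length iteration over nested random subsets $V(G)\supset U_1\supset\cdots\supset U_\ell\supset A$, covering the shells $U_i\setminus U_{i+1}$ one by one and pushing the leftover inward, which further reduces the burden on the absorber.) By the defining property of $A$, $G[A\cup W]$ has a triangle-factor; together with the triangles covering $V(G')\setminus W$ this is a triangle-factor of $G$, proving the conjecture.

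The genuine obstacle is Step~1, and it is precisely here that the bound $\lambda\le cd^2/n$ is sharp: Alon's $K_3$-free $(n,d,\lambda)$-graphs with $\lambda=\Theta(d^2/n)$ show that at this density a prescribed edge need not lie in \emph{any} triangle, so the absorber gadgets must live on a robust, triangle-rich part of $G$. I would isolate that part from the operator-norm estimate $\|M^2-(d^2/n)J\|\le\lambda^2$ for the adjacency matrix $M$ of $G$: a second-moment computation then yields $|N(u)\cap N(v)|=(1+o(1))d^2/n$ for all but $o(n^2)$ pairs $\{u,v\}$, and a further application of the expander mixing lemma controls the number of edges spanned by each such common neighbourhood, so that after deleting a sparse set of ``bad'' vertices every remaining vertex lies in the expected number of triangles through a positive proportion of its incident edges. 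Building enough pairwise disjoint gadgets and the connecting template on this robust part --- while guaranteeing that the surviving absorbing capacity still exceeds $n^{1-1/648}$ after the unavoidable deletions --- is the crux: one must arrange that the quantitative losses in the robustness step are dominated by the polynomially small deficiency supplied by our main theorem, and it is the tightness of Alon's construction that makes this balance delicate.
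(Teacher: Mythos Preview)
The statement you are attempting to prove is not a theorem of the paper: it is Conjecture~\ref{conj:KSS}, stated as an open problem. The paper's contribution is Theorem~\ref{thm:main}, which supplies \emph{evidence} for the conjecture by producing a near-perfect $K_t$-tiling, but the paper does not claim a proof of the conjecture itself. Indeed, the concluding remarks (Section~\ref{sec:remarks}) record that the best result in this direction, due to Nenadov, still requires $\lambda\le \eps d^{t-1}/(n^{t-2}\log n)$, a $\log n$ factor away from the conjectured bound. So there is no ``paper's own proof'' to compare against.

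Your outline is a reasonable research strategy --- combine the polynomially small leftover of Theorem~\ref{thm:main} with a template-style absorber --- and is in spirit close to what Nenadov does. But it is not a proof, and you correctly identify why: Step~1 is the entire difficulty, and you do not carry it out. Two concrete issues. First, the absorber must be able to swallow \emph{every} leftover vertex, so it is not enough to delete a ``sparse set of bad vertices'' and build gadgets on the remainder; those bad vertices are exactly the ones that might end up in $W$, and they too need absorbers wired into the template. Second, the quantitative balance you allude to is precisely where known arguments fail: to guarantee that each vertex has enough pairwise-disjoint absorbing gadgets, and that the template can be embedded with the required robustness, current techniques spend an extra $\log n$ in the spectral condition. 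Your second-moment idea via $\|M^2-(d^2/n)J\|\le\lambda^2$ controls codegrees on average, but does not by itself give the per-vertex control on triangle-rich neighbourhoods that the absorber construction needs at the sharp threshold $\lambda=\Theta(d^2/n)$. Until Step~1 is made rigorous with explicit gadgets and a template embedding under the bare hypothesis $\lambda\le cd^2/n$, the proposal remains a plan rather than a proof of the conjecture.
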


The \emph{$t\upth$ power $H^t$} of a graph $H$ is the graph on the
vertex set $V(H)$ where~$uv$ ($u\neq v$) is an edge if there is a
$u$-$v$-path of length at most $t$ in $H$.  Since the $(t-1)\upst$
power of a Hamilton cycle contains a $K_t$-factor if $t\mid n$, powers
of Hamilton cycles are also of interest when investigating
clique-factors.

Allen, B\"ottcher, H\`an and two of the authors~\cite{ABHKP17} proved
that, if $\lambda=o(d^{3t/2}n^{1-3t/2})$ and~$t\geq3$, then any
$(n,d,\lambda)$-graph contains the $t\upth$ power of the Hamilton
cycle (and thus a $K_{t+1}$-factor if $(t+1)\mid n$).  In the case
$t=2$, it was further proved in~\cite{ABHKP17} that the condition
$\lambda=o\left(d^{5/2}/n^{3/2}\right)$ suffices to guarantee squares
of Hamilton cycles, and thus $K_3$-factors, improving over the
aforementioned result of Krivelevich, Sudakov and Szab\'o.  For very
recent progress, due to Nenadov~\cite{Nenadov18}, see
Section~\ref{sec:remarks} below.
 
The construction of Alon of $K_3$-free $(n,d,\lambda)$-graphs shows
that the condition on~$\lambda$ in Conjecture~\ref{conj:KSS} cannot be
weakened.  The result from~\cite{KSS04} on the existence of
fractional triangle-factors supports Conjecture~\ref{conj:KSS}.  As a
further evidence in support of that conjecture we prove here the
following result.

\begin{theorem}[Main result\footnote{This result appears in an
    extended abstract in the Proceedings of Discrete Mathematics Days
    2018 (Sevilla)~\cite{HKP18a}.}\footnote{Theorem~\ref{thm:main}
    answers a question raised by Nenadov (see of~\cite[Concluding
    remarks]{Nenadov18}).}]
  \label{thm:main}
  For any $t\ge 3$ there is $n_0>0$ for which the following holds.
  Every $(n,d,\lambda)$-graph~$G$ with $n\ge n_0$ and
  $\lambda\le (1/(50t4^{t-2})) d^{t-1}/n^{t-2}$ contains
  vertex-disjoint copies of $K_t$ covering all but at most
  $n^{1-1/(8t^4)}$ vertices of~$G$.
\end{theorem}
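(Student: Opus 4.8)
The plan is to work with the $t$-uniform ``clique hypergraph'' $\cK := \cK_t(G)$, whose vertices are the vertices of $G$ and whose edges are the vertex sets of the copies of $K_t$ in $G$: a family of vertex-disjoint $K_t$'s covering all but $N_0$ vertices of $G$ is exactly a matching in $\cK$ that misses $N_0$ vertices. I would produce such a matching in three stages: (1) use the spectral hypothesis to show that $\cK$ is almost regular with small codegrees; (2) upgrade this to a fractional perfect matching of $\cK$, generalising the argument of Krivelevich, Sudakov and Szab\'o for triangles; and (3) round this fractional object to an almost-perfect integral matching by a quantitative semi-random (R\"odl nibble) argument. Throughout, $p = d/n$.

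\emph{Stage 1: clique counting.} Using the Expander Mixing Lemma (Theorem~\ref{thm:EML}) together with the spectral second-moment bound $\sum_{v}\bigl(|N(S)\cap N(v)| - p|N(S)|\bigr)^2 \le \lambda^2|N(S)|$ (where $N(S)=\bigcap_{u\in S}N(u)$), I would prove by induction on $j$ that all but a negligible family of $j$-cliques $S$ of $G$ satisfy $|N(S)| = (1\pm\tfrac1{10t})\,p^{j}n$. Summing such estimates over one-vertex extensions then gives that every vertex of $G$ lies in $(1\pm\epsilon_0)D$ copies of $K_t$, for a small constant $\epsilon_0 = \epsilon_0(t)$, where $D = \Theta\bigl(n^{t-1}p^{\binom{t}{2}}\bigr)$ is a positive power of $n$, and that every edge of $G$ lies in $\Theta(D/d) = o(D)$ copies of $K_t$ and every non-edge in none. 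The crucial point is that $\lambda \le c\,d^{t-1}/n^{t-2}$ with $c = 1/(50t4^{t-2})$ makes the cumulative Expander-Mixing-Lemma error at each of the $t-1$ levels of the induction at most $\tfrac1{10t}$; this is tight precisely because the last level deals with sets of size of order $p^{t-1}n = d^{t-1}/n^{t-2} = \lambda/c$, i.e.\ just above the size below which the Expander Mixing Lemma says nothing. Keeping track of the $t-1$ nested exceptional families and bounding their total contribution to any single clique degree by an $\epsilon_0$-fraction is the technical core of this stage.

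\emph{Stages 2--3: from counting to a near-perfect packing.} Starting from uniform weight $1/D$ on each copy of $K_t$, every vertex carries total weight in $[1-\epsilon_0,1+\epsilon_0]$; since $\cK$ is, by Stage~1 and the spectral expansion of $G$, almost regular and highly connected, one can redistribute weight from ``heavy'' to ``light'' vertices along short alternating configurations, exactly as in~\cite{KSS04}, to obtain a genuine fractional perfect matching of $\cK$ in which moreover each $K_t$ receives weight $O(1/D)$. (Alternatively one can skip an explicit fractional matching and instead iterate the rounding step on the uncovered set, which remains sufficiently pseudorandom relative to $G$ while it is large; either route works.) Because this fractional perfect matching is well-spread and all codegrees are $o(D)$, a quantitative form of the semi-random method turns it into an integral matching of $\cK$ — a family of vertex-disjoint $K_t$'s in $G$ — leaving at most $n^{1-\eta}$ vertices uncovered, where $\eta = \eta(t)>0$ depends only on $t$, on the clique degree $D$, and on the codegree saving $1/d$; tracking these dependencies yields $\eta = 1/(8t^4)$, which is the theorem.

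I expect the main obstacle to be Stage~1 at the critical spectral density $\lambda$ of order $d^{t-1}/n^{t-2}$: this is exactly the regime where the Expander Mixing Lemma is only barely strong enough to count copies of $K_t$, which is why the explicit small constant $c = 1/(50t4^{t-2})$ is forced and why the common-neighbourhood estimate must fail for a small exceptional family at \emph{every} level, requiring the delicate bookkeeping of $t-1$ nested exceptional sets. The Expander Mixing Lemma is, by contrast, far too weak to locate copies of $K_t$ inside vertex subsets of size only $n^{1-1/(8t^4)}$, so Stages~2--3 must genuinely exploit the redistribution of weight (or the randomness of the nibble) rather than a naive pass to a subgraph, and quantifying how far this can be pushed before the relevant degrees become uncontrollable is precisely what pins down the exponent $1/(8t^4)$.
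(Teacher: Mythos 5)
The overall three-stage scheme (hypergraph of cliques $\to$ fractional structure $\to$ nibble) is the right genre of argument, and Stages~2--3 in broad outline correspond to what we actually do. But Stage~1 contains a gap that is not a matter of bookkeeping: the claim that ``every edge of $G$ lies in $\Theta(D/d)=o(D)$ copies of $K_t$'' is not provable from $\lambda\le c\,d^{t-1}/n^{t-2}$ with $c$ an absolute constant, and this is precisely the obstruction the present paper is designed to circumvent (see the discussion after the statement of Theorem~\ref{thm:main} and in Section~\ref{sec:outline}). Already for $t=3$: the codegree of an edge $uv$ in $\cK_3(G)$ is $|N(u)\cap N(v)|$, and the spectral second-moment bound you cite gives $\sum_{v}\bigl(|N(u)\cap N(v)|-pd\bigr)^2\le \lambda^2 d$; with $\lambda\le c\,pd$ this means the \emph{typical} deviation over $v\in N(u)$ is of order $c\,pd$, i.e.\ already of the same order as the mean $pd$, and it gives no uniform bound at all --- a single pair can have codegree as large as $\lambda\sqrt d\gg pd$. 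Since Theorem~\ref{thm:Pippenger} (Kostochka--R\"odl/Pippenger) requires $\deg_H(u,v)\le C<D^{1-\gamma}$ for \emph{every} pair, neither a uniform nor an ``almost every edge'' version of your Stage~1 codegree estimate can be pushed through, and deleting the bad pairs would in turn destroy the degree regularity you need. Your proposed fallback --- iterating the rounding step on the leftover set while it is ``sufficiently pseudorandom relative to $G$'' --- is exactly the greedy approach dismissed in the introduction: at $\lambda=cd^2/n$ one can only guarantee $(1-c)n/3$ disjoint triangles before the uncovered set becomes too small for the expander mixing lemma to say anything.

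What the paper does instead is never try to control codegrees in $\cK_t(G)$ directly. We first produce $\ell=n^{\beta}$ fractional $K_t$-factors $f_1,\dots,f_\ell$ that are \emph{edge-weight-disjoint} in the sense of~\eqref{eq:1}/\eqref{eq:dagger}: $\sum_i\sum_{T\supset\{u,v\}}f_i(T)\le 1$ for every edge $uv$. This is done via LP duality (Proposition~\ref{prop:LPw}, Corollary~\ref{cor:ff}), with two regimes: when $d$ is moderately large one iteratively strips fractional factors off an edge-weighted version of $G$ (Theorem~\ref{cor:rand_I}), and when $d$ is near the lower threshold one randomly partitions $E(G)$ into $\ell$ sparse pieces and finds a fractional factor in each (Theorem~\ref{cor:rand_II}); each regime alone cannot cover the full range, which is why both are needed. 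The point of the edge-weight-disjointness is that the random subhypergraph $H_f\subset\cK_t(G)$, obtained by keeping each $T$ with probability $\sum_if_i(T)$, has expected degree $\ell$ at every vertex \emph{and} expected codegree at most $1$ at every pair, so Chernoff makes $H_f$ almost $\ell$-regular with codegrees $O(\log n)$. This manufactured $H_f$, not $\cK_t(G)$ itself, is what is fed into Theorem~\ref{thm:Pippenger}; the exponent $1-1/(8t^4)$ comes from $\ell=n^\beta$ and the resulting $C/D\approx (\log n)/n^\beta$, not from the clique degree $D$ of $\cK_t(G)$. Without some device of this kind (many weight-disjoint fractional factors, or something playing the same role), Stage~1 as you wrote it cannot be completed, and Stages~2--3 have nothing to stand on.
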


We remark that, under the condition $\lambda\le cd^{t-1}/n^{t-2}$ for
some appropriate $c=c(t)>0$, Krivelevich, Sudakov and
Szab\'o~\cite{KSS04} proved that any $(n,d,\lambda)$-graph contains a
fractional $K_t$-factor.

A na\"ive approach to proving Theorem~\ref{thm:main} is to pick
cliques~$K_t$ one after another, each vertex-disjoint from the
previous ones, by appealing to the pseudorandomness of~$G$ via the
expander mixing lemma.  However, even for triangles, if
$\lambda=cd^2/n$, then all what one gets this way is that~$G$ has
$(1-c)n/3$ vertex-disjoint triangles: one can see that a set of~$cn$
vertices in~$G$ induces a graph of average degree roughly~$cd$, but the
condition on~$\lambda$ and the expander mixing lemma do not guarantee
that sets of size roughly~$cd$ contain an edge, and hence we do not know
whether~$cn$ vertices necessarily span a triangle.  Thus our na\"ive
greedy approach will get stuck leaving~$cn$ vertices uncovered.  What
our result establishes is that, even for some absolute constant~$c>0$,
we can cover all but~$o(n)$ vertices of~$G$ by vertex-disjoint copies
of~$K_3$.  Moreover, $o(n)$ can be taken to be of the
form~$n^{1-\eps}$ for some $\eps>0$.  We have restricted ourselves to
triangles in this paragraph, but a similar reasoning applies to
general cliques~$K_t$ as well.

Now let~$p=d/n$ and suppose~$G=(V,E)$ is an $(n,d,\lambda)$-graph
with $\lambda\le cd^2/n$.  Inequality~(\ref{eq:EML}) implies that
\begin{equation}
  \label{eq:epsreg} 
  \left|\frac{e(A,B)}{|A||B|}-p\right|<{cp^2n\over\sqrt{|A||B|}}
  \le c^{1/2}
\end{equation}
for all~$A$, $B\subseteq V$ with $|A|$, $|B|\ge c^{1/2}n$.  Let us now
focus on the case in which~$d$ is linear in~$n$, that is, $p=d/n$~is a
constant independent of~$n$.  The powerful blow-up lemma of Koml\'os,
S\'ark\"ozy and Szemer\'edi~\cite{KSS_bl} implies that, if~$c$ is
small enough in comparison with~$p$ and~$1/t$, then \emph{any}
graph~$G=(V,E)$ on~$n$ vertices with minimum degree at least~$pn$ that
satisfies~(\ref{eq:epsreg}) contains a $K_t$-factor as long as
$t\mid n$.  Thus, Conjecture~\ref{conj:KSS} holds for dense graphs.

We remark that the blow-up lemmas for sparse graphs developed recently
by Allen, B\"ottcher, H\`an and two of the authors~\cite{ABHKP14b}
provide bounds on $\lambda$ to establish the existence of
$K_t$-factors, but those bounds are worse than those
from~\cite{ABHKP17} discussed above.

Throughout the paper floor and ceiling signs are omitted for the sake
of readability.  For graph theory terminology and notation we refer
the reader to Bollob\'as~\cite{B98}.

This paper is organized as follows. In Section~\ref{sec:tools} we
collect some of the necessary tools and prove auxiliary results. In
Section~\ref{sec:outline} we provide an overview of the proof of
Theorem~\ref{thm:main}, which splits into two cases (the `dense' and
`sparse' cases).  We deal with these two cases in
Sections~\ref{sec:dense} and~\ref{sec:sparse}, separately, and then
prove Theorem~\ref{thm:main} in Section~\ref{sec:main}.  Finally we
give some concluding remarks in Section~\ref{sec:remarks}.


 
\section{Tools and auxiliary results}\label{sec:tools}

\subsection{Probabilistic techniques} 

We shall use the following theorem of Kostochka and R\"odl~\cite{KoRo}
(see also R\"odl~\cite{VR85} and Alon and
Spencer~\cite[Theorem~4.7.1]{AS16}), which asserts the existence of an
almost perfect matching in `pseudorandom' hypergraphs.

\begin{theorem}\label{thm:Pippenger}
  Let integers $t\ge 3$ and $k\ge 8$ and real numbers~$\delta'$
  and~$\gamma$ with $0<\delta',\,\gamma <1$ be fixed.  Then there
  exists $D_0$ such that the following holds for $D\ge D_0$.  Let $H$
  be a $t$-uniform hypergraph on a set $V$ of $n$ vertices such that
  \begin{enumerate}[label=\nlabel]
  \item for all vertices $v\in V$, we have $D - k\sqrt{D\log D}\le
    \deg_H(v)\le D$ and
  \item for any two distinct vertices~$u$ and~$v\in V$, we have
    $\deg_H(u,v)\le C<D^{1-\gamma}$.
  \end{enumerate}
  Then $H$ contains a matching covering all but $O(n
  (C/D)^{(1-\delta')/(t-1)})$ vertices. 
\end{theorem}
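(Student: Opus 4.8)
The plan is to prove Theorem~\ref{thm:Pippenger} by the semi-random method (the R\"odl nibble), iterated for roughly $\log(D/C)$ rounds. Fix a small constant $\mu=\mu(t,\delta')>0$, to be chosen at the very end, and set $\beta:=e^{-\mu}$; up to $O(\mu^2)$ corrections, $\beta$ is the fraction of vertices surviving one round. A single \emph{nibble step}, applied to a $t$-uniform hypergraph $\cH$ on a vertex set $V$ whose degrees all lie in $[(1-\eta)d,(1+\eta)d]$ for some small $\eta$ and whose pairwise codegrees are all at most $C$, proceeds as follows: include each edge of $\cH$ independently with probability $\mu/d$; let $\cM$ be the set of included edges; discard from $\cM$ every edge that meets another edge of $\cM$, leaving a matching $\cM^{\ast}$; and pass to the subhypergraph $\cH[U]$ induced on the set $U\subseteq V$ of vertices left uncovered by $\cM^{\ast}$. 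Routine first-moment estimates give $\EE|U|=(\beta+O(\mu^2))|V|$; that $\EE[\deg_{\cH[U]}(v)\mid v\in U]=(\beta^{t-1}+O(\mu^2))\deg_{\cH}(v)$ for every vertex $v$; and that codegrees never increase. Thus one nibble step, in expectation, multiplies the vertex count by $\approx\beta$, multiplies the degree parameter $d$ by $\approx\beta^{t-1}$, and preserves the codegree bound $C$.

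First I would iterate this. Starting from $\cH_0:=H$ (so $d_0=D$, and, by hypothesis~(1), $\eta_0:=k\sqrt{(\log D)/D}=o(1)$), apply the nibble step $R$ times to produce nested hypergraphs $\cH_0\supseteq\cH_1\supseteq\cdots\supseteq\cH_R$ on vertex sets $V_0\supseteq\cdots\supseteq V_R$, and output the matching $\cM^{\ast}_1\cup\cdots\cup\cM^{\ast}_R$ (its parts are vertex-disjoint since $\cM^{\ast}_i$ lies inside $V_{i-1}$ and $V_i=V_{i-1}\setminus V(\cM^{\ast}_i)$). Choose $R$ minimal with $\beta^{R}\le(C/D)^{(1-\delta')/(t-1)}$, so that $R=\Theta_{\mu,\delta',t}(\log(D/C))=O(\log D)$. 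The parameters evolve (in expectation) as $|V_i|\approx\beta^{i}n$ and $d_i\approx\beta^{(t-1)i}D$, whence $|V_R|\lesssim n\,(C/D)^{(1-\delta')/(t-1)}$ --- the target --- while $d_R\approx D\beta^{(t-1)R}\asymp D^{\delta'}C^{1-\delta'}$ still tends to infinity with $D$ (as $C\ge 1$), and at every round $C/d_i\le C/d_R\asymp(C/D)^{\delta'}\le D^{-\gamma\delta'}\to 0$. This last point is precisely where the assumption $C<D^{1-\gamma}$ from hypothesis~(2) enters: it guarantees that throughout the process the codegree stays polynomially smaller than the degree, which is what each nibble step needs.

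The heart of the matter is establishing these estimates \emph{with high probability} at each of the $R$ rounds. One must show that $|U|$ and, for each surviving vertex $v$, the degree $\deg_{\cH[U]}(v)$ are concentrated about their means up to a relative error $\eta_{\mathrm{new}}=o(1/R)$, so that the accumulated regularity defect $\eta_i=\eta_0+\sum_{j\le i}\eta_{\mathrm{new}}$ stays $o(1)$. The delicate feature is that flipping one edge's inclusion coin can change a fixed vertex-degree by as much as $\Theta(C)$ in the worst case, so a crude bounded-differences estimate is too weak; such a large change is, however, rare (the number of picked edges at a vertex is $\Bin(d_i,\mu/d_i)$-distributed, with mean $\mu$), and one handles this in the way standard for nibble arguments --- by a concentration inequality suited to rare large fluctuations (Talagrand's inequality, or the method of typical bounded differences), together with the option of declaring a small \emph{bad set} $B_i$ of vertices, those at which concentration still fails, uncovered and continuing the nibble on $V_i\setminus B_i$ after a routine cleaning. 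Either way one maintains the invariant that $\cH_i$ is near-regular of degree $\approx d_i$ with codegree $\le C$ on all but a $\rho_i$-fraction of $V_i$, where $\rho_i$ is superpolynomially small in $d_i\ge d_R\to\infty$; the total number of discarded vertices $\sum_i|B_i|\le\sum_i\rho_i|V_{i-1}|$ is a geometric-type sum of order $O_\mu(\max_i\rho_i)\cdot n=O(\rho_R\,n)$, which is $\ll n\,(C/D)^{(1-\delta')/(t-1)}$ once $D$ is large and is thus absorbed into the error term. A union bound over the $O(\log D)$ rounds shows the process succeeds with probability $1-o(1)$, so the desired matching exists; finally, taking $\mu=\mu(t,\delta')$ small and $D_0=D_0(t,k,\delta',\gamma)$ large makes the cumulative $O(\mu^2)$ corrections and all the $\eta_i$ small enough that the number of uncovered vertices, $|V_R|+\sum_i|B_i|$, is indeed $O\bigl(n\,(C/D)^{(1-\delta')/(t-1)}\bigr)$.
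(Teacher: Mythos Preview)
The paper does not prove Theorem~\ref{thm:Pippenger}; it is quoted as a known result of Kostochka and R\"odl~\cite{KoRo} (with pointers also to R\"odl~\cite{VR85} and Alon--Spencer~\cite[Theorem~4.7.1]{AS16}) and used as a black box. Your sketch via the R\"odl nibble is exactly the method those references use, and the parameter bookkeeping you give --- vertices shrinking like $\beta^i n$, degrees like $\beta^{(t-1)i}D$, codegrees frozen at~$C$, stopping when $\beta^R\approx(C/D)^{(1-\delta')/(t-1)}$ so that $d_R\asymp D^{\delta'}C^{1-\delta'}\to\infty$ and $C/d_R\asymp(C/D)^{\delta'}\le D^{-\gamma\delta'}\to0$ --- is correct and is precisely how the exponent $(1-\delta')/(t-1)$ arises.

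Two remarks on the concentration step, which you rightly identify as the heart of the matter. First, you will not be able to make $\rho_i$ small enough to union-bound over all $n$ vertices (nothing ties $n$ to $D$), so the ``bad set'' route is not optional: one must track the \emph{expected} number of irregular vertices, put them aside, and show their total is negligible --- this is how~\cite{KoRo} and~\cite{AS16} proceed. Second, while Talagrand works, the cited proofs actually get by with second-moment/Chebyshev or a careful Azuma application, because the codegree bound $C$ controls the variance of $\deg_{\cH[U]}(v)$ directly; the point is not that the Lipschitz constant is small but that the relevant sum of squared effects is $O(d_i\cdot C)$ rather than $O(d_i^2)$. With these caveats your outline is sound and matches the literature.
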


We shall use the following concentration results.

\begin{theorem}[Chernoff bounds {\cite[Corollary~2.4 and Theorems~2.8
    and~2.10]{JLR00}}]
  \label{thm:chernoff}
  Suppose $X$ is a sum of a collection of independent Bernoulli random
  variables.  Then, for $\delta\in(0,3/2)$, we have
  \[\PP\big(X>(1+\delta)\EE  X\big)<e^{-\delta^2\EE X/3}\quad\text{and}\quad
    \PP\big(X<(1-\delta)\EE  X\big)<e^{-\delta^2\EE X/2}
    \,.\]
  Moreover, for any $t\ge 6\EE X$, we have
  \[\PP\big(X\ge \EE X+t)\le e^{-t}\,.\]
\end{theorem}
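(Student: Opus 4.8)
The plan is to prove all three inequalities by the classical exponential moment (Bernstein) method; in the paper one simply cites \cite{JLR00}. Write $X=\sum_i X_i$ with the $X_i$ independent Bernoulli, $\PP(X_i=1)=p_i$, and put $\mu:=\EE X=\sum_i p_i$. First I would bound the moment generating function: for every real $s$, independence gives $\EE e^{sX}=\prod_i\bigl(1+p_i(e^s-1)\bigr)\le\exp\bigl(\mu(e^s-1)\bigr)$, using $1+x\le e^x$ on each factor.

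For the upper tail I would then apply Markov's inequality to $e^{sX}$ with $s>0$: for $a>0$,
\[
\PP(X\ge a)\le e^{-sa}\,\EE e^{sX}\le\exp\bigl(\mu(e^s-1)-sa\bigr).
\]
Choosing $a=(1+\delta)\mu$ and the optimal $s=\log(1+\delta)$ yields the standard bound $\PP(X\ge(1+\delta)\mu)\le\bigl(e^\delta/(1+\delta)^{1+\delta}\bigr)^\mu$, and it then remains to verify the one-variable inequality $(1+\delta)\log(1+\delta)-\delta\ge\delta^2/3$ on $(0,3/2)$, which gives the claimed bound $e^{-\delta^2\mu/3}$. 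The lower tail is symmetric: repeating the computation with $s<0$ (take $s=\log(1-\delta)$, and note $x\mapsto e^{sx}$ is then decreasing) gives $\PP(X\le(1-\delta)\mu)\le\bigl(e^{-\delta}/(1-\delta)^{1-\delta}\bigr)^\mu$, and one checks $(1-\delta)\log(1-\delta)+\delta\ge\delta^2/2$ on $(0,1)$ to obtain $e^{-\delta^2\mu/2}$.

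For the last inequality I would reuse the upper-tail bound with $1+\delta=1+t/\mu$, i.e. $\delta=t/\mu\ge6$. This gives $\PP(X\ge\mu+t)\le\exp\bigl(t-(\mu+t)\log(1+t/\mu)\bigr)$, so it suffices to show $(\mu+t)\log(1+t/\mu)\ge2t$; setting $x=t/\mu\ge6$ this reads $(1+x)\log(1+x)\ge2x$, which holds because $x\mapsto(1+x)\log(1+x)/x$ is increasing on $[6,\infty)$ and already equals $(7\log7)/6>2$ at $x=6$.

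There is essentially no real obstacle here, since this is textbook material. The only points that need any care are the two elementary estimates $(1+\delta)\log(1+\delta)-\delta\ge\delta^2/3$ for $\delta\in(0,3/2)$ and $(1-\delta)\log(1-\delta)+\delta\ge\delta^2/2$ for $\delta\in(0,1)$ — each follows from a short analysis of the sign of the derivative of the corresponding difference — together with keeping the admissible ranges of $\delta$ straight throughout.
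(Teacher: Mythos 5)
The paper offers no proof of this statement; it is quoted directly from Janson, Łuczak and Ruciński \cite{JLR00}, so there is nothing in the paper for your argument to diverge from. Your proof is the standard exponential-moment (Chernoff/Bernstein) argument, which is also how these bounds are obtained in \cite{JLR00}, and it is correct: the MGF bound $\EE e^{sX}\le\exp(\mu(e^s-1))$, the optimal choices $s=\log(1\pm\delta)$, and the reduction of the third bound to $(1+x)\log(1+x)\ge 2x$ for $x\ge 6$ are all standard and accurate. One small caveat on the elementary inequality $(1+\delta)\log(1+\delta)-\delta\ge\delta^2/3$ for $\delta\in(0,3/2)$: this is \emph{not} a pure sign-of-derivative argument, because $f(\delta):=(1+\delta)\log(1+\delta)-\delta-\delta^2/3$ has $f'(\delta)=\log(1+\delta)-2\delta/3$, which is positive near $0$ but already negative at $\delta=3/2$ (since $\log(5/2)<1$). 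So $f$ increases and then decreases on $(0,3/2)$, and you must additionally verify the endpoint value $f(3/2)=\tfrac52\log\tfrac52-\tfrac94>0$ (it is $\approx 0.04$); together with $f(0)=0$ and $f''$ changing sign exactly once, this gives $f\ge0$ on the whole interval. The lower-tail inequality, by contrast, does follow from a clean monotonicity argument since there $g''(\delta)=\delta/(1-\delta)>0$. Also note that Markov's inequality only yields $\le$ rather than the strict $<$ appearing in the statement; this is immaterial for the paper's applications.
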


For a graph $G=(V,E)$ we denote by $G_p$ the spanning random subgraph
of~$G$ in which each edge from $E$ is included with probability~$p$,
independently of all other edges.

\begin{theorem}[{Janson's inequality~\cite[Theorem~2.14]{JLR00}}]
  \label{thm:Janson}
  Let $p \in (0,1)$ be given and consider a family
  $\{ H_i \}_{i \in \cI}$ of subgraphs of a graph~$G$.
  For each $i \in \cI$, let~$X_i$ denote the indicator random variable
  for the event that $H_i \subseteq G_p$.  Write $H_i \sim H_j$ for
  each ordered pair $(i,j) \in \cI \times \cI$ such that
  $E(H_i) \cap E(H_j) \neq \emptyset$.
  Let~$X = \sum_{i \in \cI} X_i$.  Then
  $\mathbb{E}[X] = \sum_{i \in \cI} p^{e(H_i)}$.  Furthermore, let
  \begin{equation}
    \label{eq:Delta_def}
    \Delta = \sum_{H_i \sim H_j} \mathbb{E}[X_i X_j]
    = \sum_{H_i \sim H_j} p^{e(H_i) + e(H_j) - e(H_i \cap H_j)}.
  \end{equation}
  Then, for any $0 < \gamma < 1$, we have
  \begin{equation}
    \label{eq:Janson}
    \mathbb{P} [X \le (1-\gamma) \mathbb{E}[X]] \le \exp
    \left(-\frac{\gamma^2 \mathbb{E}[X]^2}{2\Delta} \right). 
  \end{equation}
\end{theorem}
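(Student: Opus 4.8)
The plan is to prove the bound by the exponential-moment (Laplace transform) method. Set $\mu=\EE[X]=\sum_{i\in\cI}p^{e(H_i)}$ and assume, as we may, that each $H_i$ has at least one edge (so that the diagonal terms of $\Delta$ are present and $\Delta\ge\mu$). For every $t\ge0$, Markov's inequality applied to $e^{-tX}$ gives
\[
  \PP\bigl[X\le(1-\gamma)\mu\bigr]\le e^{t(1-\gamma)\mu}\,\EE\bigl[e^{-tX}\bigr].
\]
Since $X=\sum_{i\in\cI}X_i$ with each $X_i\in\{0,1\}$, writing $\phi=1-e^{-t}\in[0,1)$ we have $e^{-tX}=\prod_{i\in\cI}e^{-tX_i}=\prod_{i\in\cI}(1-\phi X_i)$, so the whole problem reduces to an upper bound on $\EE\bigl[\prod_{i\in\cI}(1-\phi X_i)\bigr]$.

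The core is the estimate
\[
  \EE\Bigl[\prod_{i\in\cI}(1-\phi X_i)\Bigr]\le\exp\Bigl(-\phi\mu+\tfrac12\phi^2\Delta\Bigr),
\]
whose $\phi\to1$ (i.e.\ $t\to\infty$) limit is the classical Janson bound for $\PP[X=0]$. To prove it I would order $\cI=\{1,\dots,m\}$, put $f_k=\EE\bigl[\prod_{i\le k}(1-\phi X_i)\bigr]$, $\mu_k=\EE[X_k]$, and $\delta_k=\sum_{i<k,\,E(H_i)\cap E(H_k)\ne\emptyset}\EE[X_kX_i]$, and establish the multiplicative recursion $f_k\le f_{k-1}\exp(-\phi\mu_k+\phi^2\delta_k)$; unrolling it, and noting that $\sum_k\delta_k$ is the sum of $\EE[X_iX_k]$ over unordered overlapping pairs, hence at most $\tfrac12\Delta$, gives the displayed bound. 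For the recursion, write $\prod_{i\le k}(1-\phi X_i)=\prod_{i<k}(1-\phi X_i)-\phi X_k\prod_{i<k}(1-\phi X_i)$ and split $\{i<k\}$ into $D_k=\{i<k:E(H_i)\cap E(H_k)\ne\emptyset\}$ and its complement $I_k$. Set $Q_k=\prod_{i\in I_k}(1-\phi X_i)$, a nonnegative decreasing function of the edge-indicators of $G_p$ that is, moreover, independent of $X_k$ because each $H_i$ with $i\in I_k$ is edge-disjoint from $H_k$. Using the elementary inequality $\prod_{i\in D_k}(1-\phi X_i)\ge 1-\phi\sum_{i\in D_k}X_i$ to linearize the overlapping part, and Harris's (FKG) inequality to bound $\EE[X_kX_iQ_k]\le\EE[X_kX_i]\,\EE[Q_k]$ (the events $\{X_k=X_i=1\}$ are increasing while $Q_k$ is decreasing), one obtains
\[
  \EE\Bigl[X_k\prod_{i<k}(1-\phi X_i)\Bigr]\ge\EE[Q_k]\bigl(\mu_k-\phi\delta_k\bigr)\ge f_{k-1}\bigl(\mu_k-\phi\delta_k\bigr),
\]
where the last step uses $\EE[Q_k]\ge f_{k-1}$ (since $f_{k-1}=\EE[Q_k\prod_{i\in D_k}(1-\phi X_i)]$ and $0\le\prod_{i\in D_k}(1-\phi X_i)\le1$) when $\mu_k-\phi\delta_k\ge0$, and is trivial otherwise. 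Hence $f_k\le f_{k-1}(1-\phi\mu_k+\phi^2\delta_k)\le f_{k-1}\exp(-\phi\mu_k+\phi^2\delta_k)$, as desired.

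It then remains to substitute the core estimate into the Laplace bound, giving $\PP[X\le(1-\gamma)\mu]\le\exp\bigl(t(1-\gamma)\mu-\phi\mu+\tfrac12\phi^2\Delta\bigr)$ with $\phi=1-e^{-t}$, and to optimize over $t$. Because $\Delta\ge\mu>\gamma\mu$, the minimizing $t$ has $\phi=1-e^{-t}$ of order $\gamma\mu/\Delta\in(0,1)$; substituting this (and accounting for the lower-order gap between $t$ and $1-e^{-t}$) makes the exponent equal to $-\gamma^2\mu^2/(2\Delta)$, which is the assertion.

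I expect the main obstacle to be the dependency bookkeeping in the core estimate: cleanly separating the factor of the product that is genuinely independent of $X_k$ (the edge-disjoint copies) from the factor that is only negatively correlated with it, and invoking Harris's inequality for precisely the right monotone events so that the accumulated error terms reassemble into (half of) $\Delta$. Once the recursion is in place, the remaining steps—the Laplace inequality, the telescoping, and the single-variable optimization over $t$—are routine.
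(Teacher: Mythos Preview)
The paper does not prove this theorem; it is quoted from~\cite{JLR00} as a standard tool, so there is no ``paper's proof'' to compare against. Your outline is the standard Laplace-transform proof of Janson's lower-tail inequality and is essentially correct, including the dependency bookkeeping you flag as the main obstacle.

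There is, however, a small but genuine gap in the step you call routine. With your core estimate $\EE[e^{-tX}]\le\exp(-\phi\mu+\tfrac12\phi^2\Delta)$ and $\phi=1-e^{-t}$, substituting $\phi\approx\gamma\mu/\Delta$ does \emph{not} make the exponent equal to $-\gamma^2\mu^2/(2\Delta)$: since $t=-\ln(1-\phi)\ge\phi$, the term $t(1-\gamma)\mu$ overshoots $\phi(1-\gamma)\mu$, and the ``lower-order gap'' you mention pushes the exponent \emph{above} $-\gamma^2\mu^2/(2\Delta)$, not below. Optimizing honestly over $t$ with the core estimate as you stated it yields only $-\gamma^2\mu^2/(2(\Delta+\mu))$.

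The fix is not to throw away slack too early. Your recursion actually gives the sharper exponent $-\phi\mu+\phi^2\Delta_2$ with $\Delta_2=\sum_k\delta_k=(\Delta-\mu)/2$, not merely $\le\Delta/2$. Keeping this, and bounding in the variable~$t$ via $1-e^{-t}\ge t-t^2/2$ and $(1-e^{-t})^2\le t^2$, the exponent becomes at most
\[
t(1-\gamma)\mu-(t-\tfrac{t^2}{2})\mu+t^2\Delta_2
=-\gamma\mu t+t^2\Bigl(\tfrac{\mu}{2}+\Delta_2\Bigr)
=-\gamma\mu t+\tfrac{t^2\Delta}{2},
\]
which optimizes at $t=\gamma\mu/\Delta$ to exactly $-\gamma^2\mu^2/(2\Delta)$. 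So the approach works, provided you retain the $\mu/2$ of slack between $\Delta_2$ and $\Delta/2$ until the $t$-versus-$\phi$ error has a chance to absorb it.
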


\subsection{Linear programming techniques}
\label{sec:LP}
We shall consider \emph{weighted graphs}~$(G,w)$ where~$G=(V,E)$ is a
graph and $w\colon E\to [0,1]$ is a function on its edges.
If~$w\equiv 1$, we identify $(G,w)$ with~$G$.  For every vertex
$v\in V$, we define its \emph{weighted degree} $\deg_w(v)$ to be
$\sum_{u\in N(v)} w(uv)$.

Let $\cK_t(G)$ denote the set of all copies of $K_t$ in $G$. A functon
$f\colon \cK_t(G) \to[0,1]$ is called a \emph{fractional $K_t$-factor}
if $\sum_{T\in\cK_t(G)\colon V(T)\ni v}f(T)= 1$ for every $v\in V$,
and for every edge $uv\in E(G)$ one has
\begin{equation}\label{eq:ftf_weight}
  \sum_{T\in \cK_t(G)\colon V(T)\supset\{u,v\}} f(T)\le w(uv).
\end{equation}
A fractional $K_t$-factor in a weighted graph~$(G,w)$ thus `respects'
the weight function~$w$.  We remark that the definition of a
fractional $K_t$-factor in a graph generalizes the definition of a
fractional triangle-factor from the introduction in a straightforward
way, and it is itself generalized by the above one for weighted
graphs, since condition~\eqref{eq:ftf_weight} is satisfied in the case
of fractional $K_t$-factors in unweighted graphs because of our
convention that $w\equiv 1$ in that case.

We shall use the duality theorem of linear programming; see
e.g.~\cite{MG07}.  The maximum weight of a fractional
\textit{$K_t$-matching} in~$(G,w)$ is given by the following linear
programme:
\begin{align} 
\begin{split}
\label{opt:primal}
\max\,\,& \sum_{T\in\cK_t(G)}f(T)\\
& \sum_{T\in\cK_t(G)\colon V(T)\ni v} f(T)\le 1\quad (\forall\, v)\\
& \sum_{T\in\cK_t(G)\colon V(T)\supset \{u, v\}} f(T)\le w(uv)\quad (\forall\, uv\in E(G))\\
& f(T)\ge 0 \quad (\forall\, T\in\cK_t(G))
\end{split}
\end{align}

The dual of~\eqref{opt:primal} is the following linear programme:
\begin{align} 
\begin{split}
\label{opt:dual}
\min\,\,& \sum_{v\in V}g(v)+\sum_{uv\in E(G)} h(uv)w(uv)\\
& \sum_{v\colon v\in V(T)} g(v)+\sum_{uv\in E(T)} h(uv)\ge 1\quad (\forall\, T\in\cK_t(G))\\
& g(v)\ge 0 \quad (\forall\, v)\\
& h(uv)\ge 0 \quad (\forall\, uv\in E(G))\\
\end{split}
\end{align}
Both linear programmes above are clearly feasible, and therefore the
duality theorem tells us that both admit optimal solutions and that,
moreover, these optimal values are equal.  Given optimal solutions~$f$
and~$(g,h)$, the complementary slackness conditions tell us that if
$g(v)>0$ then the corresponding inequality
$\sum_{T\in\cK_t(G)\colon V(T)\ni v} f(T)\le 1$ in the primal
linear programme~\eqref{opt:primal} holds with equality, i.e.,
$\sum_{T\in\cK_t(G)\colon V(T)\ni v} f(T)= 1$.

Let~$t^*(G,w)$ be the optimum value of~\eqref{opt:primal}
and~\eqref{opt:dual}.  For a collection~$\cF$ of vertex-disjoint
copies of $K_t$ from $\cK_t(G)$, let
$w(\cF):=\sum_{T\in \cF} \min_{uv\in E(T)}w(uv)$.  Furthermore,
let~$t(G,w)$ be the maximum possible value of~$w(\cF)$ for such a
collection~$\cF$.  Write~$|g|$ for $\sum_{v\in V}g(v)$ and $|h|$ for
$\sum_{uv\in E(G)} h(uv)w(uv)$ (see the objective function
in~\eqref{opt:dual}).
 
The following proposition collects some useful properties of linear
programmes~\eqref{opt:primal} and~\eqref{opt:dual}.  A variant for
unweighted uniform hypergraphs and fractional matchings was first
stated and proved by Krivelevich in~\cite[Proposition~2]{Kriv96}.

\begin{proposition}
  \label{prop:LPw}
  Let $t\ge 3$ be given and let $(G,w)$ be a weighted graph.
  Suppose~$G=(V,E)$.  Then the following hold.
  \begin{enumerate}[label=\nlabel]
  \item $t^*(G,w)\ge t(G,w)$.  \label{it:tts}
  \item $t^*(G,w)\le |V|/t$.  Furthermore, if~$t^*(G,w)=|V|/t$,
    then~$(G,w)$ has a fractional $K_t$-factor.  \label{it:ts_bound}
  \item If $g\colon V\to \RR_{\ge 0}$ and $h \colon E\to \RR_{\ge 0}$
    form a feasible solution to~\eqref{opt:dual}, then for every
    subset $U\subseteq V$ the functions $g':=g\restriction_U$ and
    $h':=h\restriction_{E\cap \binom{U}{2}}$ form a feasible solution
    to~\eqref{opt:dual} with $G[U]$ in place of $G$; in particular we
    have $|g'|+|h'|\ge t^*(G[U],w)$.  \label{it:restr_w}
  \item If $g\colon V\to\RR_{\ge 0}$ and $h \colon E\to \RR_{\ge 0}$
    form an optimal solution to~\eqref{opt:dual}, then
    $t^*(G,w)\ge |V_1|/t$, where
    $V_1:=\{v\in V\colon g(v)>0\}$.  \label{it:lower_bound}
\end{enumerate}
\end{proposition}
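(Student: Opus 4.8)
The plan is to verify each of the four items by elementary linear programming arguments, using only the feasibility of the programmes~\eqref{opt:primal} and~\eqref{opt:dual}, the duality theorem quoted above, and complementary slackness. Throughout, the sole structural facts needed are that the members of a vertex-disjoint collection of copies of $K_t$ pairwise share no vertex, that $w$ takes values in $[0,1]$, and that each $K_t$ has exactly $t$ vertices, so no serious combinatorics is involved.

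For~\ref{it:tts}, given a collection $\cF$ of vertex-disjoint copies of $K_t$, I would set $f(T):=\min_{uv\in E(T)}w(uv)$ for $T\in\cF$ and $f(T):=0$ otherwise, and check that $f$ is feasible for~\eqref{opt:primal}. Vertex-disjointness means each vertex lies in at most one member of $\cF$, so the vertex constraints reduce to $f(T)\le 1$, which holds because $w\le 1$; likewise each pair $\{u,v\}$ lies in at most one member of $\cF$, and for that member $f(T)\le w(uv)$ by the choice of $f$, which gives the edge constraints. The objective value of this $f$ equals $w(\cF)$, so $t^*(G,w)\ge w(\cF)$, and taking the maximum over $\cF$ yields $t^*(G,w)\ge t(G,w)$.

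For~\ref{it:ts_bound}, summing the vertex constraints of~\eqref{opt:primal} over all $v\in V$ and exchanging the order of summation gives $t\sum_{T}f(T)=\sum_{v\in V}\sum_{T\ni v}f(T)\le|V|$, so $t^*(G,w)\le|V|/t$; if equality holds, the inequality chain is tight for an optimal $f$, which forces $\sum_{T\ni v}f(T)=1$ for every $v$, and since $f$ is nonnegative and already satisfies the edge constraints, it is a fractional $K_t$-factor. For~\ref{it:restr_w}, I would note that every $T\in\cK_t(G[U])$ is also a copy of $K_t$ in $G$ with all vertices in $U$ and all edges in $E\cap\binom{U}{2}$, so the constraint of~\eqref{opt:dual} associated with $T$ only involves values of $g$ on $U$ and of $h$ on $E\cap\binom{U}{2}$; hence $(g',h')$ inherits feasibility for the dual with $G[U]$ in place of $G$, and since that dual is a minimisation problem whose optimum equals $t^*(G[U],w)$ by duality, $|g'|+|h'|\ge t^*(G[U],w)$. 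Finally, for~\ref{it:lower_bound} I would fix an optimal primal solution $f$ and apply complementary slackness to the optimal pair: $g(v)>0$ forces $\sum_{T\ni v}f(T)=1$, so $t\,t^*(G,w)=\sum_{v\in V}\sum_{T\ni v}f(T)\ge\sum_{v\in V_1}\sum_{T\ni v}f(T)=|V_1|$, which is the asserted bound.

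As for the main obstacle: there is none of real depth here; the only places demanding a little care are the bookkeeping of the weighted edge constraints in~\ref{it:tts} (using the minimum over the edges of each clique correctly) and the invocation of complementary slackness in~\ref{it:lower_bound}, where one must work with an optimal pair $(f,(g,h))$ rather than with merely feasible solutions. The weighted formulation adds notational overhead but no conceptual difficulty beyond the unweighted hypergraph version in~\cite{Kriv96}.
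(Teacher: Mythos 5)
Your proposal is correct and follows essentially the same route as the paper's proof: the same explicit primal feasible solution for item~\ref{it:tts}, the same double-counting for item~\ref{it:ts_bound}, restriction of the dual constraints together with weak duality for item~\ref{it:restr_w}, and complementary slackness for item~\ref{it:lower_bound}. The only difference is that you spell out the verification of the primal constraints in~\ref{it:tts} and the ``furthermore'' clause of~\ref{it:ts_bound} more explicitly than the paper, which leaves those as immediate.
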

\begin{proof}
  Let~$\cF$ be a collection of vertex-disjoint copies of~$K_t$.  Let
  $f(T):=\min_{uv\in E(T)}w(uv)$ for every $T\in \cF$ and $f(T):=0$
  for every $T\in \cK_t(G)\setminus \cF$.  Clearly~$f$
  satisfies~\eqref{opt:primal}, whence~\ref{it:tts} follows.

  From $\sum_v\sum_{T\in\cK_t(G)\colon V(T)\ni v} f(T)\le |V|$ and
  $\sum_v\sum_{T\in\cK_t(G)\colon V(T)\ni v} f(T)=t\sum_{T\in\cK_t(G)}
  f(T)$, assertion~\ref{it:ts_bound} follows immediately.

  It is also clear that restricting a feasible solution
  of~\eqref{opt:dual} to a subset $U\subseteq V$, we obtain functions
  for which the relevant inequalities hold.  Since the value of a
  feasible solution of the dual~\eqref{opt:dual} is always at least
  the value of a feasible solution of the primal
  programme~\eqref{opt:primal} (if both are feasible, which is the
  case here), we obtain $|g'|+|h'|\ge t^*(G[U],w)$.  Thus,
  assertion~\ref{it:restr_w} follows.

  From the complementary slackness conditions we know that, whenever
  $g(v)>0$, we have $\sum_{T\in\cK_t(G)\colon V(T)\ni v} f(T)=
  1$. This implies that
  \[
    t^*(G,w)=\sum_{T\in\cK_t(G)} f(T)=\frac{1}{t}\sum_{v\in
      V}\sum_{T\in\cK_t(G)\colon V(T)\ni v} f(T)\ge 
    \frac1t\sum_{v\in V_1}1={|V_1|\over t},
  \]
  which proves~\ref{it:lower_bound}. 
\end{proof}




\subsection{Applications of linear programming to weighted graphs}
\label{sec:P_ndl}
In this section we provide an auxiliary lemma that will help us verify
later that certain weighted subgraphs of $(n,d,\lambda)$-graphs
possess fractional $K_t$-factors.
The main result of this subsection, Corollary~\ref{cor:ff},
generalizes results from Krivelevich, Sudakov and
Szab\'o~\cite[Section~5]{KSS04} to weighted graphs and fractional
$K_t$-factors.  Our proofs follow their proof strategy.

We first state a simple fact.  Given $\alpha\ge 0$ and a weighted graph
$(G,w)$, we call an edge $uv\in E(G)$ \emph{$\alpha$-rich} if
$w(uv)\ge 1-\alpha$.  Similarly, we call a copy $T$ of $K_t$ in $G$
\emph{$\alpha$-rich} if $w(T)\ge 1-\alpha$, where
$w(T):=\min_{uv\in E(T)}w(uv)$.

\begin{fact}\label{fact:a_rich}
  Let $(G,w)$ be a weighted $d$-regular graph.  If
  $\sum_{u\in N(v)} w(uv)\ge d(1-\alpha^2)$ for a vertex $v$, then $v$
  is incident to at least $(1-\alpha)d$ many $\alpha$-rich edges.
\end{fact}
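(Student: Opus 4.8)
The plan is to establish this by a single averaging estimate on the weighted degree $\deg_w(v)=\sum_{u\in N(v)}w(uv)$. First I would dispose of the degenerate range of $\alpha$: if $\alpha\ge 1$ then $(1-\alpha)d\le 0$ and there is nothing to prove, so I may assume $0\le\alpha<1$; and if $\alpha=0$, then the hypothesis $\deg_w(v)\ge d$ together with $w(uv)\le 1$ for all edges (recall $w\colon E\to[0,1]$) and the $d$-regularity of $G$ forces $w(uv)=1$ for every $u\in N(v)$, so all $d$ edges at $v$ are $0$-rich and the claim holds trivially. Thus it remains to treat $0<\alpha<1$.

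For this case, let $k$ denote the number of $\alpha$-rich edges incident to $v$. Each of those $k$ edges has weight at most $1$, while each of the remaining $d-k$ edges at $v$ fails to be $\alpha$-rich and hence has weight strictly less than $1-\alpha$. Consequently
\[
d(1-\alpha^2)\;\le\;\deg_w(v)\;=\;\sum_{u\in N(v)}w(uv)\;\le\;k\cdot 1+(d-k)(1-\alpha)\;=\;d(1-\alpha)+\alpha k.
\]
Subtracting $d(1-\alpha)$ from the two ends gives $\alpha\,d(1-\alpha)=d(1-\alpha^2)-d(1-\alpha)\le\alpha k$, and dividing through by $\alpha>0$ yields $k\ge(1-\alpha)d$, which is exactly the assertion.

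I expect no genuine obstacle here; the only points that deserve a moment's attention are the separation of the boundary cases $\alpha\ge 1$ and $\alpha=0$ (where the bound degenerates and must be read off directly), and the bookkeeping that ``$\alpha$-rich'' is defined via the non-strict inequality $w(uv)\ge 1-\alpha$, so that the complementary edges at $v$ satisfy the strict inequality $w(uv)<1-\alpha$ — which is all the displayed chain of inequalities needs (indeed, the weaker estimate $w(uv)\le 1-\alpha$ on those edges already suffices, so one does not even have to track strictness).
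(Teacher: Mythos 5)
Your proof is correct and is essentially the same averaging argument as in the paper: you count the $\alpha$-rich edges $k$ directly and bound $\deg_w(v)\le d(1-\alpha)+\alpha k$, while the paper counts the non-$\alpha$-rich edges $m=d-k$ and bounds $\deg_w(v)\le d-m\alpha$, which is the same inequality. Your explicit treatment of the degenerate cases $\alpha=0$ and $\alpha\ge1$ is a minor (and harmless) refinement that the paper leaves implicit.
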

\begin{proof}
  Let $m$ be the number of non-$\alpha$-rich edges incident to a
  vertex~$v$.  Then we have $\sum_{u\in N(v)} w(uv)\le d-m\alpha$.  If
  $\sum_{u\in N(v)} w(uv)\ge d(1-\alpha^2)$, then
  $d\alpha^2\ge m\alpha$ and thus $m\le \alpha d$.
\end{proof}

Let a weighted graph~$(G,w)$ with~$G=(V,E)$ be given.  In what
follows, we shall consider the spanning subgraph~$H=(V,F)$ of~$G$,
with~$F\subset E$ the set of $\alpha$-rich edges of~$(G,w)$,
where~$\alpha$ will be chosen suitably.

Let $t\ge 3$ and $0<D,\,D'<n$ be given.  An $n$-vertex graph $G$ has
property $\cP(t, D, D', n)$ if the following holds.  For every subset
$U\subset V(G)$ of cardinality $|U|\ge n-D$ and for every subset
$U_0\subset U$ with $|U_0|=D/t$, there exists a family $\cT_0$ of
at least $D'/(t-1)$ copies of~$K_t$ in $G[U]$ with the following
properties:
\begin{enumerate}
\item $V(T)\subset U$ for every $T\in \cT_0$,\label{pr:one}
\item $|V(T)\cap U_0|=1$ for every $T\in \cT_0$,
\item $V(T)\cap V(T')\subset U_0$ for any distinct $T$
  and~$T'\in\cT_0$.  \label{pr:three}
\end{enumerate}

The copies of $K_t$ specified above thus have the property that they
are all edge-disjoint and intersect pairwise in at most one vertex,
which must be from~$U_0$.

\begin{lemma}\label{lem:large_case_tw}
  Let~$(G,w)$ be a weighted graph and let $\alpha\in[0, 1/(20t))$ be
  given.  Suppose~$H$ is the spanning subgraph of~$G$ formed by the
  $\alpha$-rich edges in~$(G,w)$.  Suppose~$H$ is such that
  \begin{enumerate}[label=\nlabel]
  \item every $0.11n/t$ vertices span a copy of $K_t$ in $H$ and
  \item $H$ has property $\cP(t, D, 0.2n, n)$ for some $D\le n/2$. 
  \end{enumerate}
  Then $t^*(G[U],w)\ge t^*(H[U],w)\ge(|U|-D/t)/t$ for every
  $U\subseteq V=V(G)$ with $|U|\ge n-D$. 
\end{lemma}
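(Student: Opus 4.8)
The statement asserts that $t^*(G[U],w)\ge t^*(H[U],w)\ge(|U|-D/t)/t$ for every $U$ with $|U|\ge n-D$. The first inequality is immediate: since $H\subseteq G$, any feasible fractional $K_t$-matching of $H[U]$ (with the weight function $w$) is also feasible in $G[U]$, because every edge of $H$ is $\alpha$-rich and the constraint $\sum_{T\supseteq\{u,v\}}f(T)\le w(uv)$ is at least as restrictive in $H$; actually cleaner is the observation that $\cK_t(H[U])\subseteq\cK_t(G[U])$, so extending $f$ by zero on the extra cliques gives a feasible solution of the same value. So I would dispose of that in one line and focus all the work on proving $t^*(H[U],w)\ge(|U|-D/t)/t$.

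The plan for the main inequality is an LP-duality argument in the spirit of Krivelevich (Proposition~\ref{prop:LPw}\ref{it:lower_bound}) and \cite[Section~5]{KSS04}. Fix $U$ with $|U|\ge n-D$ and let $(g,h)$ be an optimal solution of the dual programme~\eqref{opt:dual} for $H[U]$. Let $V_1=\{v\in U\colon g(v)>0\}$. By Proposition~\ref{prop:LPw}\ref{it:lower_bound}, $t^*(H[U],w)\ge|V_1|/t$, so it suffices to show $|V_1|\ge|U|-D/t$, i.e. that the set $U_0:=\{v\in U\colon g(v)=0\}$ has size at most $D/t$. Suppose for contradiction $|U_0|>D/t$; shrink it to a set of size exactly $D/t$ (still inside $U$, and $|U|\ge n-D$, so hypothesis on property $\cP$ applies with this $U$ and $U_0$). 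Now invoke $\cP(t,D,0.2n,n)$: there is a family $\cT_0$ of at least $0.2n/(t-1)$ copies of $K_t$ in $H[U]$, each meeting $U_0$ in exactly one vertex, pairwise intersecting only inside $U_0$ — in particular pairwise edge-disjoint. Because every vertex of such a $T$ outside $U_0$ has $g$-value $0$ and all of $T$'s edges are $\alpha$-rich (hence $w\ge1-\alpha$), the dual feasibility constraint $\sum_{v\in V(T)}g(v)+\sum_{uv\in E(T)}h(uv)\ge1$ forces $\sum_{uv\in E(T)}h(uv)\ge1$ for each such $T$. Since the $T\in\cT_0$ are edge-disjoint, summing gives $\sum_{uv\in E(H[U])}h(uv)\ge|\cT_0|\ge0.2n/(t-1)$, and therefore the dual objective satisfies $|g|+|h|\ge\sum_{uv}h(uv)w(uv)$; here I need the edges to be $\alpha$-rich again so $w(uv)\ge1-\alpha$, giving $|h|\ge(1-\alpha)\cdot0.2n/(t-1)$. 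Hence $t^*(H[U],w)=|g|+|h|\ge(1-\alpha)\cdot0.2n/(t-1)$.

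On the other hand, I want an upper bound on $t^*(H[U],w)$ that contradicts this. This is where the first hypothesis — every $0.11n/t$ vertices span a $K_t$ in $H$ — comes in. I would argue that $H[U]$ (or $H$ itself) cannot have too large an LP value by exhibiting a fractional $K_t$-\emph{factor}-like certificate, or more directly: if $t^*(H[U],w)$ were large, by Proposition~\ref{prop:LPw}\ref{it:tts} and the duality we would want to produce a large $K_t$-matching, but the cleaner route is to bound $t^*$ from above by covering. The intended contradiction, I believe, is numerical: $(1-\alpha)\cdot0.2n/(t-1)$ is large, while the trivial bound $t^*(H[U],w)\le|U|/t\le n/t$ from Proposition~\ref{prop:LPw}\ref{it:ts_bound} is \emph{not} smaller, so that alone does not suffice. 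Instead I expect we must use hypothesis~(1) to show that in fact $U_0$ being large is impossible for a different reason: the complementary slackness forces $\sum_{T\ni v}f(T)=1$ for every $v\in V_1$, and the vertices of $U_0$ are exactly those left "uncovered" by the optimal primal $f$; but hypothesis~(1) says any $0.11n/t$ vertices span a $K_t$, so if $|U_0|\ge0.11n/t$ we could greedily peel off disjoint $K_t$'s inside $U_0$, each of weight $\ge1-\alpha$ (edges $\alpha$-rich), improving the primal solution and contradicting optimality — unless $D/t<0.11n/t$, i.e. $D<0.11n$, which is \emph{not} guaranteed (we only have $D\le n/2$). So the real mechanism must combine both: property $\cP$ handles the regime $D/t\le|U_0|$, producing the edge-disjoint family and the lower bound $|h|\ge(1-\alpha)0.2n/(t-1)$ on the dual, and then a matching \emph{upper} bound $t^*(H[U],w)\le$ (something like) $|U|/t\le n/t$ together with the fact that $0.2(1-\alpha)/(t-1)$ versus $1/t$: for $t\ge3$ and $\alpha<1/(20t)$ one has $0.2(1-\alpha)/(t-1)>1/t$ exactly when $0.2t(1-\alpha)>t-1$, which fails for large $t$. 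So I must instead use hypothesis~(1) to get a \emph{better} upper bound on $t^*$ than $n/t$ when $|U_0|$ is large — namely, pass to $H[U\setminus(\text{something})]$ and iterate. Concretely, I would show that $t^*(H[U],w)\le t(H[U],w)+\text{(small)}$ is the wrong direction; rather, the argument is: the existence of the edge-disjoint $K_t$-family from $\cP$ means the \emph{dual optimum is large}, which via duality means the \emph{primal optimum is large}, i.e. there is a large fractional $K_t$-matching — but a large fractional matching, combined with hypothesis~(1) used to round it / to cover the leftover $U_0$ by honest disjoint cliques, would cover more than $|V_1|$ vertices, contradicting that $f$ was optimal and left $U_0$ uncovered.

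\textbf{The main obstacle.} The delicate point is making the contradiction quantitatively tight: property~$\cP$ only gives $0.2n/(t-1)$ \emph{edge-disjoint} cliques (not vertex-disjoint), so they witness a large dual value but not directly a large vertex-disjoint family; converting "large dual value with many $U_0$-rooted edge-disjoint cliques" into "$|U_0|\le D/t$" requires playing the slackness conditions carefully, and one must be careful that the constants $0.11$, $0.2$, $1/(20t)$ and the $t$-dependence all line up for every $t\ge3$, not just asymptotically. I expect the clean write-up assumes $|U_0|>D/t$, builds $\cT_0$, derives $t^*(H[U],w)=|g|+|h|\ge(1-\alpha)|\cT_0|/t'$ for the appropriate normalization, and then contradicts the upper bound $t^*(H[U],w)\le|U|/t$ by noting that property $\cP$ actually forces the cliques of $\cT_0$ to be spread out enough — or, alternatively, the write-up does not seek a contradiction at all but directly lower-bounds $|V_1|$ by deleting $U_0$, applying Proposition~\ref{prop:LPw}\ref{it:restr_w} to $U\setminus U_0$ where hypothesis~(1) now guarantees $H[U\setminus U_0]$ has a fractional (even integral) $K_t$-factor since $|U\setminus U_0|\ge n-D-D/t$ is still huge and every $0.11n/t$ of its vertices span a $K_t$, giving $t^*(H[U\setminus U_0],w)\ge(|U|-|U_0|)/t$, hence $t^*(H[U],w)\ge(|U|-|U_0|)/t$; combined with the contradiction-free bound that $|U_0|\le D/t$ coming from $\cP$ applied once. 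I would pursue this last, cleaner packaging: show $|U_0|\le D/t$ using property $\cP$ and slackness (each $U_0$-rooted clique in $\cT_0$ charges $\ge1$ to $h$, edge-disjointness sums this up, and comparing with $t^*(H[U],w)\le|U|/t$ bounds $|\cT_0|$, hence via the count $|\cT_0|\ge0.2n/(t-1)$ forces a contradiction unless $U_0$ was small), then conclude via Proposition~\ref{prop:LPw}\ref{it:lower_bound}.
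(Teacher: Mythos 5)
Your reduction is on the right track: you correctly dispose of $t^*(G[U],w)\ge t^*(H[U],w)$ (since $\cK_t(H[U])\subseteq\cK_t(G[U])$), fix an optimal dual $(g,h)$ for $H[U]$, and note by Proposition~\ref{prop:LPw}~\ref{it:lower_bound} that it suffices to show the zero-set $U_0=\{v\in U\colon g(v)=0\}$ has size at most $D/t$. You then shrink $U_0$ to size exactly $D/t$, invoke property $\cP$ to get the family $\cT_0$, and aim for a contradiction. All of that matches the paper. But the central step of your argument is wrong, and the remainder never recovers.

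The error is in the sentence ``every vertex of such a $T$ outside $U_0$ has $g$-value $0$\ldots the dual feasibility constraint forces $\sum_{uv\in E(T)}h(uv)\ge1$.'' Property $\cP$ gives cliques $T$ with exactly \emph{one} vertex in $U_0$; the other $t-1$ vertices of $T$ lie in $U\setminus U_0$, where $g$ may well be positive. So the only thing dual feasibility together with $g\restriction_{U_0}\equiv0$ gives you is
\[
  \sum_{v\in V(T)\setminus U_0} g(v)+\sum_{uv\in E(T)}h(uv)\ \ge\ 1,
\]
not $\sum_{uv\in E(T)}h(uv)\ge1$. Consequently, summing over the edge-disjoint family $\cT_0$ you do not get a lower bound on $|h|$ alone; you get a lower bound on the $g$-mass on $W:=\bigl(\bigcup_{T\in\cT_0}V(T)\bigr)\setminus U_0$ \emph{plus} the $h$-mass on the $\cT_0$-edges, namely (after inserting $w\ge1-\alpha$ on $\alpha$-rich edges)
\[
  \sum_{v\in W}g(v)+\sum_{T\in\cT_0}\sum_{uv\in E(T)}h(uv)\,w(uv)\ \ge\ (1-\alpha)\,\frac{|W|}{t-1}.
\]
This is the quantity the paper actually bounds. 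Note $|W|=(t-1)|\cT_0|=0.2n$ because the cliques pairwise meet only in $U_0$, so $W$ is partitioned by the $\cT_0$-cliques.

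Once you have that, the correct way to bring in hypothesis~(1) is not to delete $U_0$, as you speculate in your final paragraph, but to delete $W$. The restricted functions $g\restriction_{U\setminus W}$ and $h\restriction_{E(H)\cap\binom{U\setminus W}{2}}$ remain dual-feasible for $H[U\setminus W]$ (Proposition~\ref{prop:LPw}~\ref{it:restr_w}), hence $|g\restriction_{U\setminus W}|+|h\restriction|\ge t^*(H[U\setminus W],w)$; and hypothesis~(1) lets you greedily pack $(|U\setminus W|-0.11n/t)/t$ vertex-disjoint $K_t$'s in $H[U\setminus W]$, each of weight at least $1-\alpha$, so $t^*(H[U\setminus W],w)\ge(1-\alpha)(|U\setminus W|-0.11n/t)/t$ by~\ref{it:tts}. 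The two contributions to $|g|+|h|$ are disjoint (the $\cT_0$-edges all touch $U_0\cup W$ and $\sum_{v\in W}g(v)$ is disjoint from $\sum_{v\in U\setminus W}g(v)$), so
\[
  t^*(H[U],w)\ \ge\ (1-\alpha)\left(\frac{|W|}{t-1}+\frac{|U|-|W|-0.11n/t}{t}\right)\ \ge\ (1-\alpha)\,\frac{|U|+0.09n/t}{t}\ >\ \frac{|U|}{t},
\]
where the last inequality uses $\alpha<1/(20t)$; this contradicts $t^*(H[U],w)\le|U|/t$ from~\ref{it:ts_bound}. Your ``alternative packaging'' of deleting $U_0$ and applying hypothesis~(1) to $U\setminus U_0$ does not close: it yields $t^*(H[U\setminus U_0],w)\ge(1-\alpha)(|U\setminus U_0|-0.11n/t)/t$, which is \emph{weaker} than $(|U|-D/t)/t$ when $|U_0|>D/t$, and you have thrown away the extra leverage from $\cT_0$. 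The crucial point you were missing is that the excess comes from $W$, which contributes at rate $\frac{1}{t-1}>\frac{1}{t}$ per vertex, and that this must be combined additively with the greedy packing on the complement $U\setminus W$.
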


\begin{proof}
  Let~$U$ as in the statement of the lemma be given.  Let $g$ and $h$
  form an optimal solution of the linear programme~\eqref{opt:dual}
  applied to $H[U]$ instead of $G$.  Let $U'=\{u\in U\colon
  g(u)=0\}$. If $|U'|\le D/t$ then we have by
  Proposition~\ref{prop:LPw}~\ref{it:lower_bound} that
  $t^*(H[U],w)\ge|U\setminus U'|/t\ge(|U|-D/t)/t$ and we are done.

  We now assume that $|U'|>D/t$ and derive a contradiction.  First we
  fix a subset $U_0\subset U'$ with $|U_0|=D/t$ and then we consider a
  family $\cT_0$ of cardinality $0.2n/(t-1)$ as given by
  property~$\cP$.  Let
  $W:=\left(\bigcup_{T\in \cT_0}V(T)\right)\setminus U_0$.  We have
  $|W|=0.2n$ and, moreover,
  \begin{multline*}
    \sum_{v\in W}g(v)+\sum_{T\in \cT_0}\sum_{uv\in E(T)}h(uv)w(uv)\ge
    \sum_{v\in W}g(v)+\sum_{T\in \cT_0}\sum_{uv\in
      E(T)}h(uv)(1-\alpha)\\ 
    \ge (1-\alpha)\left(\sum_{v\in W}g(v)+\sum_{T\in \cT_0}\sum_{uv\in
        E(T)}h(uv)\right) 
    = (1-\alpha)\sum_{T\in \cT_0}\left(\sum_{v\in W\cap
        V(T)}g(v)+\sum_{uv\in E(T)}h(uv)\right)\\ 
    \ge (1-\alpha)|W|/(t-1)
  \end{multline*}
  because $g$ and $h$ form an optimal solution of~\eqref{opt:dual},
  $g\restriction_{U_0}\equiv0$ and
  $\sum_{v\colon v\in V(T)} g(v)+\sum_{V(T)\supset\{u,v\}} h(uv)\ge 1$
  for every $T\in\cK_t(H[U])$.
 
  Since every $0.11n/t$ vertices of $H$ span a copy of $K_t$, we find
  at least $(|U\setminus W|-0.11n/t)/t$ vertex-disjoint copies of
  $K_t$ in $H[U\setminus W]$.  Proposition~\ref{prop:LPw}~\ref{it:tts}
  then implies that
  \[
    t^*(H[U\setminus W],w)\ge t(H[U\setminus W],w)\ge
    (1-\alpha)\frac{|U\setminus W|-0.11n/t}{t}. 
  \]  
  By Proposition~\ref{prop:LPw}~\ref{it:restr_w},
  $g\restriction_{U\setminus W}$ and
  $h\restriction_{E(G)\cap \binom{U\setminus W}{2}}$ form a feasible
  solution to the linear programme~\eqref{opt:dual} applied to
  $H[U\setminus W]$ instead of $H$ and, moreover,
  \[
    |g\restriction_{U\setminus W}|+|h\restriction_{E(H)\cap \binom{U\setminus W}{2}}|\ge 
    t^*(H[U\setminus W],w)\ge (1-\alpha)\frac{|U\setminus W| - 0.11n/t}{t}.
  \]
  Thus we have
  \begin{align*}
    t^*(H[U],w)&\ge\left(\sum_{v\in W}g(v)+\sum_{T\in \cT_0}\sum_{uv\in
              E(T)}h(uv)w(uv)\right)
              +|g\restriction_{U\setminus W}|+|h\restriction_{E(H)\cap
              \binom{U\setminus W}{2}}|\\ 
            &\ge (1-\alpha)\left(\frac{|W|}{t-1}+ \frac{|U|- |W| -
              0.11n/t}{t}\right)\\ 
            & \ge(1-\alpha)\frac{|U|+|W|/t-0.11n/t}{t}\\
                &=(1-\alpha)\frac{|U|+0.09n/t}{t} 
              =\frac{|U|}{t}+\frac{0.09n}{t^2} - \alpha
              \frac{1.1n}{t}>{|U|\over t},
  \end{align*}
  which contradicts Proposition~\ref{prop:LPw}~\ref{it:ts_bound}. 
\end{proof}

We denote by $\cT_v$ a family of copies of $K_t$ in $G$ with $v\in T$
for every $T\in \cT_v$ and with $V(T)\cap V(T')=\{v\}$ for all
distinct $T$ and~$T'$ in~$\cT_v$.  The next lemma establishes a
sufficient condition for the existence of a fractional $K_t$-factor in
a weighted graph.

\begin{lemma}
  \label{thm:fract_factor_Gw}
  Let $\alpha\ge 0$ be a real number and let $t$, $n$ and~$D\ge3$ be
  integers with~$\alpha < 1/t^2$ and $D\le n/2$.  Suppose $(G, w)$ is
  an $n$-vertex weighted graph such that {\rm(\textit{i})}~for
  every $v\in V(G)$, there exists a family $\cT_v$ of at least
  $D/(t-1)$ $\alpha$-rich copies of $K_t$ and {\rm(\textit{ii})} for
  every $U\subseteq V$ of size $|U|\ge n-D$, one has
  $t^*(G[U],w)\ge(|U|-D/t)/t$.  Then $(G,w)$ contains a
  fractional $K_t$-factor.
\end{lemma}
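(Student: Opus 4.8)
The plan is to establish $t^*(G,w)=n/t$; together with the bound $t^*(G,w)\le n/t$ from Proposition~\ref{prop:LPw}\ref{it:ts_bound}, the same part of that proposition then yields a fractional $K_t$-factor. So I would argue by contradiction, assuming $t^*(G,w)<n/t$, and fix an optimal solution $(g,h)$ of the dual program~\eqref{opt:dual} for~$G$ together with an optimal fractional $K_t$-matching~$f$. Put $V_0:=\{v\in V(G):g(v)=0\}$. The first (easy) step is that $V_0\neq\emptyset$: by complementary slackness $\sum_{T\in\cK_t(G):\,T\ni v}f(T)=1$ whenever $g(v)>0$, so if $V_0$ were empty we would get $t\,t^*(G,w)=\sum_{v}\sum_{T\ni v}f(T)=n$, contradicting $t^*(G,w)<n/t$. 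Fix $v_0\in V_0$.

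The heart of the argument is to play the family $\cT_{v_0}$ from~{\rm(\textit{i})} against hypothesis~{\rm(\textit{ii})}. I would choose a subfamily $\cT_{v_0}'\subseteq\cT_{v_0}$ with $M:=|\cT_{v_0}'|=\lfloor D/(t-1)\rfloor$ (possible since $|\cT_{v_0}|\ge D/(t-1)$) and set $\bar U:=\bigcup_{T\in\cT_{v_0}'}\bigl(V(T)\setminus\{v_0\}\bigr)$. Because the copies in $\cT_{v_0}$ meet pairwise only in $v_0$, the vertex sets $V(T)\setminus\{v_0\}$, and likewise the edge sets $E(T)$, are pairwise disjoint over $T\in\cT_{v_0}'$, so $|\bar U|=(t-1)M\le D$. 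Apply~{\rm(\textit{ii})} with $U:=V\setminus\bar U$ (legitimate, as $|U|=n-(t-1)M\ge n-D$) to get $t^*(G[U],w)\ge(|U|-D/t)/t$. On the other hand, restricting $(g,h)$ to $U$ is, by Proposition~\ref{prop:LPw}\ref{it:restr_w}, a feasible dual for $G[U]$, so
\[
 t^*(G[U],w)\ \le\ \sum_{v\in U}g(v)+\!\!\sum_{\substack{e\in E(G)\\ e\subseteq U}}\!\!h(e)w(e)\ =\ t^*(G,w)-\Bigl(\sum_{v\in\bar U}g(v)+\!\!\sum_{\substack{e\in E(G)\\ e\cap\bar U\neq\emptyset}}\!\!h(e)w(e)\Bigr).
\]

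The remaining task is to bound the bracketed quantity from below. For each $T\in\cT_{v_0}'$ the dual constraint for $T$ together with $g(v_0)=0$ gives $\sum_{u\in V(T)\setminus\{v_0\}}g(u)+\sum_{e\in E(T)}h(e)\ge1$; since $T$ is $\alpha$-rich every $e\in E(T)$ has $w(e)\ge1-\alpha$, hence $\sum_{u\in V(T)\setminus\{v_0\}}g(u)+\sum_{e\in E(T)}h(e)w(e)\ge1-\alpha$. As every edge of $T$ has an endpoint in $V(T)\setminus\{v_0\}\subseteq\bar U$ and the relevant sets are pairwise disjoint over $\cT_{v_0}'$, summing gives $\sum_{v\in\bar U}g(v)+\sum_{e\cap\bar U\neq\emptyset}h(e)w(e)\ge(1-\alpha)M$. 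Substituting this and $|U|=n-(t-1)M$ into the display, and using $t^*(G,w)<n/t$, one obtains $(1-\alpha)M<\tfrac{t-1}{t}M+\tfrac{D}{t^2}$, i.e.\ $M(1-t\alpha)<D/t$. Since $M=\lfloor D/(t-1)\rfloor$ and $\alpha<1/t^2$, a short computation (cleanest when $(t-1)\mid D$, where $M=D/(t-1)$ and the inequality becomes $\tfrac{1-t\alpha}{t-1}<\tfrac1t$, i.e.\ $\alpha>1/t^2$; the few genuinely small values of $D$ being handled by instead choosing $\cT_{v_0}'$ of size the integer $\lceil D/(t-1)\rceil$ and/or padding $\bar U$ up to $D$ vertices with a heaviest-$g$ vertex) produces a contradiction, proving the lemma.

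The step I expect to be the main obstacle is the passage from the raw edge-multipliers $h(e)$ that appear in the dual constraints to the weighted quantities $h(e)w(e)$ that occur in the objective of~\eqref{opt:dual}: this conversion costs a factor governed precisely by $\alpha$-richness, and it is exactly why the hypothesis must be $\alpha<1/t^2$ (rather than, say, $\alpha<1/t$). The second delicate point is the counting bookkeeping that must simultaneously keep $|\bar U|\le D$, so that hypothesis~{\rm(\textit{ii})} can be invoked on $V\setminus\bar U$, and still guarantee that $\bar U$ captures enough complete "petal" sets $V(T)\setminus\{v_0\}$ for the $(1-\alpha)M$ lower bound to bite.
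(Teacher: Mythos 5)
Your proof is correct and matches the paper's argument essentially line for line: both fix an optimal dual $(g,h)$, reduce to a vertex $x$ with $g(x)=0$, remove the petals $\bigcup_{T\in\cT_x}V(T)\setminus\{x\}$ to land in a set to which hypothesis~(\textit{ii}) applies, and pit the resulting lower bound on the restricted dual against the $(1-\alpha)$\,-\,per\,-\,petal contribution of the removed part, with the same algebra pinning down $\alpha<1/t^2$. The floor/ceiling bookkeeping you agonize over at the end is explicitly waived by the paper's blanket convention that such signs are omitted (so the paper simply takes $|W|=D$), making that a non-issue.
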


\begin{proof}
  Let $g\colon V=V(G)\to\RR_{\ge 0}$
  and~$h\colon E=E(G)\to \RR_{\ge 0}$ form an optimal solution
  to~\eqref{opt:dual}.  If $g(v)>0$ for each $v\in V$, we obtain, by
  Proposition~\ref{prop:LPw}~\ref{it:lower_bound}, that
  $t^*(G,w)\ge |V|/t$.  By
  Proposition~\ref{prop:LPw}~\ref{it:ts_bound},
  we deduce that~$t^*(G,w)=|V|/t$ and conclude that~$(G,w)$ has a 
  fractional $K_t$-factor.
 
  Assume now for a contradiction that there is a vertex $x$ with
  $g(x)=0$.  Let $\cT_x$ be a family of $D/(t-1)$ $\alpha$-rich copies
  of $K_t$.  Let
  $W:=\left(\bigcup_{T\in \cT_x}V(T)\right)\setminus\{x\}$ and observe
  that $|W|=D$. Since the copies of $K_t$ from $\cT_x$ pairwise
  intersect only at~$x$ and they are $\alpha$-rich, it follows that
  \begin{multline*}
    \sum_{v\in W}g(v)+\sum_{T\in \cT_x}\sum_{uv\in E(T)}h(uv)w(uv)\ge \sum_{v\in W}g(v)+\sum_{T\in \cT_x}\sum_{uv\in E(T)}h(uv)(1-\alpha)\\
    \ge (1-\alpha)\left(\sum_{v\in W}g(v)+\sum_{T\in \cT_x}\sum_{uv\in E(T)}h(uv)\right)
    = (1-\alpha)\sum_{T\in \cT_x}\left(\sum_{v\in W\cap V(T)}g(v)+\sum_{uv\in E(T)}h(uv)\right)\\
    \ge (1-\alpha)D/(t-1).
  \end{multline*}

  Proposition~\ref{prop:LPw}~\ref{it:restr_w} yields that
  $g_1:=g\restriction_{V\setminus W}$ and
  $h_1:=h\restriction_{E\cap \binom{V\setminus W}{2}}$ is a feasible
  solution to~\eqref{opt:dual} with~$G$ replaced by~$G[V\setminus W]$.
  Therefore we have $|g_1|+|h_1|\ge t^*(G[V\setminus W],w)$.  By
  assumption~(\textit{ii}), we get
  \[
    |g_1|+|h_1|\ge t^*\left(G[V\setminus W],w\right)\ge
    \frac{|V\setminus W|-D/t}{t}=\frac{n-D-D/t}{t}.
  \]
  Together we obtain 
  \begin{align*}
    t^*(G,w)
    &=|g|+|h|\ge \sum_{v\in W}g(v)+\sum_{T\in \cT_x}\sum_{uv\in
      E(T)}h(uv)w(uv)+|g_1|+|h_1|\\ 
    &\ge \frac{(1-\alpha)D}{t-1}+\frac{n-D-D/t}{t}>{n\over t}.
  \end{align*}
  This contradicts $t^*(G,w)\le n/t$ (see
  Proposition~\ref{prop:LPw}~\ref{it:ts_bound}). Thus, this case never
  happens; i.e., $g(v)>0$ for all $v\in V$.  We conclude that~$(G,w)$
  contains a fractional $K_t$-factor.
\end{proof}

The following corollary of Lemmas~\ref{lem:large_case_tw}
and~\ref{thm:fract_factor_Gw} is our main tool for finding fractional
$K_t$-factors.

\begin{corollary}
  \label{cor:ff}
  Let $\alpha\ge 0$ be a real number and let $t$, $n$ and~$D\ge 3$ be
  integers with~$\alpha < 1/(7t^2)$ and $D\le n/2$.  Suppose $(G,w)$
  is an $n$-vertex weighted graph and suppose~$H$ is the spanning
  subgraph of~$G$ formed by the $\alpha$-rich edges of~$(G,w)$.
  Suppose
  \begin{enumerate}[label=\nlabel]
  \item for every $v\in V(G)$ there exists a family $\cT_v$ of at
    least $D/(t-1)$ $\alpha$-rich copies of $K_t$; \label{cor:1}
  \item every $0.11n/t$ vertices span a copy of $K_t$ in $H$; \label{cor:2}
  \item $H$ has property $\cP(t, D, 0.2n, n)$ for some $D\le
    n/2$. \label{cor:3} 
  \end{enumerate}
  Then $(G,w)$ contains a fractional $K_t$-factor. 
\end{corollary}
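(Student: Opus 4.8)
The plan is to deduce the corollary directly from Lemmas~\ref{lem:large_case_tw} and~\ref{thm:fract_factor_Gw}, whose hypotheses are, up to a check on the constant in front of~$\alpha$, exactly the hypotheses assumed here. The only mildly delicate point is to verify that the single bound $\alpha<1/(7t^2)$ fits inside both admissible windows for~$\alpha$, namely $[0,1/(20t))$ (needed in Lemma~\ref{lem:large_case_tw}) and $[0,1/t^2)$ (needed in Lemma~\ref{thm:fract_factor_Gw}).

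First I would dispose of the numerology. For every integer $t\ge3$ we have $7t^2\ge 20t$, hence $\alpha<1/(7t^2)\le 1/(20t)$, and of course also $\alpha<1/(7t^2)<1/t^2$. Moreover $3\le D\le n/2$ is assumed. Thus both lemmas are applicable in principle.

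Next, I would observe that hypotheses~\ref{cor:2} and~\ref{cor:3} of the corollary are verbatim the two hypotheses of Lemma~\ref{lem:large_case_tw}, applied with the same spanning subgraph~$H$ of $\alpha$-rich edges, the same~$D$, and the constant $0.2n$. That lemma then yields $t^*(G[U],w)\ge(|U|-D/t)/t$ for every $U\subseteq V(G)$ with $|U|\ge n-D$. This inequality is precisely condition~(\textit{ii}) of Lemma~\ref{thm:fract_factor_Gw}, while hypothesis~\ref{cor:1} of the corollary is precisely condition~(\textit{i}) of that lemma. Hence all hypotheses of Lemma~\ref{thm:fract_factor_Gw} are met, and applying it gives that $(G,w)$ contains a fractional $K_t$-factor, as desired.

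I do not expect any genuine obstacle here: the corollary is essentially a packaging of the two preceding lemmas into a single ready-to-use statement, and the only thing that could conceivably fail is the constant check above, which goes through for all $t\ge3$.
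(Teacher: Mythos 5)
Your proof is correct and coincides with what the paper intends: the corollary is indeed just the composition of Lemma~\ref{lem:large_case_tw} (which converts hypotheses~\ref{cor:2} and~\ref{cor:3} into the lower bound $t^*(G[U],w)\ge(|U|-D/t)/t$) with Lemma~\ref{thm:fract_factor_Gw} (which combines that bound with hypothesis~\ref{cor:1} to produce the fractional $K_t$-factor), and your observation that $\alpha<1/(7t^2)$ implies both $\alpha<1/(20t)$ and $\alpha<1/t^2$ for $t\ge3$ is exactly the constant check needed.
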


\subsection{Further useful properties of $(n,d,\lambda)$-graphs}
We will use the following auxiliary results.

\begin{proposition}[Proposition~2.3
  in~\cite{KSS04}]\label{prop:lambda}
  Let $G$ be an $(n,d,\lambda)$-graph with~$d\le n/2$.  Then
  $\lambda\ge \sqrt{d/2}$.
\end{proposition}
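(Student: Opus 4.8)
The plan is to use the trace method on the adjacency matrix. Write $A$ for the adjacency matrix of $G$ and let $\mu_1\ge\mu_2\ge\cdots\ge\mu_n$ denote its eigenvalues. Since $G$ is $d$-regular, the all-ones vector shows $\mu_1=d$, and by the definition of an $(n,d,\lambda)$-graph every other eigenvalue satisfies $|\mu_i|\le\lambda$ for $i\ge 2$ (this includes the most negative one, since the second largest eigenvalue in absolute value is exactly $\max_{i\ge 2}|\mu_i|$). On the other hand, $A$ is a symmetric $0/1$ matrix with zero diagonal in which every row sums to $d$, so, using $A_{ij}^2=A_{ij}$, one computes $\operatorname{tr}(A^2)=\sum_{i,j}A_{ij}^2=\sum_{i,j}A_{ij}=nd$.

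Equating the two expressions for $\operatorname{tr}(A^2)=\sum_{i=1}^n\mu_i^2$ gives $\sum_{i=1}^n\mu_i^2=nd$, hence $\sum_{i=2}^n\mu_i^2=nd-d^2=d(n-d)$. There are $n-1$ terms on the left, each of absolute value at most $\lambda$, so $(n-1)\lambda^2\ge d(n-d)$. Finally, the hypothesis $d\le n/2$ yields $n-d\ge n/2\ge (n-1)/2$, whence $(n-1)\lambda^2\ge d(n-1)/2$, that is, $\lambda^2\ge d/2$, which is the claimed bound.

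I do not expect any genuine obstacle: this is a short spectral computation. The only points that need a little care are the identity $A_{ij}^2=A_{ij}$ (valid because the entries are $0$ or $1$) and the observation that every eigenvalue other than $\mu_1=d$ is bounded in absolute value by $\lambda$, so that the sum $\sum_{i=2}^n\mu_i^2$ is controlled by $(n-1)\lambda^2$. (If $G$ is disconnected then $d$ occurs with higher multiplicity and $\lambda\ge d\ge\sqrt{d/2}$ holds trivially, but the trace argument above covers all cases uniformly and needs no case distinction.)
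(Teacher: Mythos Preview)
Your argument is correct and is the standard trace-method proof of this fact. Note, however, that the present paper does not supply its own proof of Proposition~\ref{prop:lambda}: the statement is simply quoted as Proposition~2.3 of~\cite{KSS04}. Your computation (using $\operatorname{tr}(A^2)=nd$, subtracting $\mu_1^2=d^2$, and bounding the remaining $n-1$ squared eigenvalues by $\lambda^2$ each) is exactly the argument one finds in~\cite{KSS04}, so there is nothing to compare.
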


\begin{fact}
  \label{fact:d_bound}
  Let $G$ be an $(n,d,\lambda)$-graph with $d\le n/2$.  Suppose
  $\lambda\le d^{t-1}/n^{t-2}$ for some $t\ge 3$.  Then
  $d\ge n^{1-1/(2t-3)}/2$.
\end{fact}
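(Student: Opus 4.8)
The plan is to combine the hypothesis directly with the eigenvalue lower bound of Proposition~\ref{prop:lambda}, so this is essentially a one-line computation. Since $d\le n/2$ (and we may clearly assume $d\ge 1$, as otherwise the asserted inequality is vacuous), Proposition~\ref{prop:lambda} gives $\lambda\ge\sqrt{d/2}$, and together with the assumed bound $\lambda\le d^{t-1}/n^{t-2}$ this yields
\[
  \sqrt{d/2}\le \frac{d^{t-1}}{n^{t-2}}.
\]

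First I would rearrange this inequality: multiplying both sides by $n^{t-2}$ and dividing by $\sqrt{d}$ gives $n^{t-2}\le\sqrt2\,d^{\,t-3/2}=\sqrt2\,d^{(2t-3)/2}$, and raising both sides to the power $2/(2t-3)>0$ produces
\[
  d\ge \left(\frac{n^{t-2}}{\sqrt2}\right)^{2/(2t-3)}=\frac{n^{(2t-4)/(2t-3)}}{2^{1/(2t-3)}}.
\]

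It then remains only to simplify the two exponents. The exponent of $n$ is $(2t-4)/(2t-3)=1-1/(2t-3)$, which is exactly the exponent appearing in the statement. For the constant, since $t\ge 3$ we have $2t-3\ge 3$, hence $2^{1/(2t-3)}\le 2^{1/3}<2$, and therefore $d\ge n^{1-1/(2t-3)}/2$, as required. There is essentially no obstacle here: the only points that need care are the bookkeeping of the fractional exponents when passing from $\sqrt{d/2}\le d^{t-1}/n^{t-2}$ to a lower bound on $d$, and the elementary estimate $2^{1/(2t-3)}\le 2$ for $t\ge 3$.
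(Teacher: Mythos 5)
Your proof is correct and follows essentially the same route as the paper: combine $\lambda\ge\sqrt{d/2}$ from Proposition~\ref{prop:lambda} with the hypothesis $\lambda\le d^{t-1}/n^{t-2}$, rearrange to $d^{2t-3}\ge n^{2t-4}/2$, and conclude. The only difference is cosmetic: the paper states the resulting bound as $d\ge n^{1-1/(2t-3)}/2^{1/(2t-3)}$ and leaves the reader to observe that this implies the Fact, whereas you explicitly carry out the estimate $2^{1/(2t-3)}\le 2$ for $t\ge 3$, which is the more careful way to close the argument.
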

\begin{proof}
  Proposition~\ref{prop:lambda} tells us that $\lambda\ge \sqrt{d/2}$.
  Thus $\lambda\le d^{t-1}/n^{t-2}$ implies that
  $d^{2t-3}\ge n^{2t-4}/2$, whence
  $d\ge n^{1-1/(2t-3)}/2^{1/{(2t-3)}}$ follows.
\end{proof}

Given two graphs $G$ and $G'$ on the same vertex set $V$, let
$G\setminus G'=(V, E(G)\setminus E(G'))$.  The following proposition
gives a rough estimate for the number of copies of $K_{t-1}$ in
induced subgraphs of $(n,d,\lambda)$-graphs, even after removing a
small number of edges incident to each vertex.

\begin{proposition}
  \label{prop:Kt_in_U}
  For any integer $t\ge 3$, there exists $n_0$ such that every
  $(n,d,\lambda)$-graph $G$ with $n\ge n_0$ satisfies the following.
  Suppose $\lambda (4n/d)^{t-2}\le m\le d$.  Let $G'$ be a graph on
  $V(G)$ with maximum degree $(d/(4 n))^{t-2}m$.  Then, for any
  $2\le i\le t-1$ and any set $U$ of at least $(d/(4n))^{t-i-1}m$
  vertices of $G$, the number of copies of $K_i$ in
  $(G\setminus G')[U]$ is at least
  $2^{-i^2}i!^{-1}{|U|}^{i}(d/n)^{\binom{i}{2}}$ and at most
  $2^{i^2}i!^{-1}{|U|}^{i}(d/n)^{\binom{i}{2}}$.
\end{proposition}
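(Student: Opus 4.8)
The plan is to induct on $i$, establishing the upper and lower bounds simultaneously. For the base case $i=2$, I would count edges: by the expander mixing lemma (Theorem~\ref{thm:EML}) applied with $A=B=U$, the number of edges of $G$ inside $U$ is $\frac{d}{2n}|U|^2 \pm \lambda|U|$, and since $|U|\ge (d/(4n))^{t-3}m \ge m \ge \lambda(4n/d)^{t-2}\ge \lambda$ (using $d\le n/2$, so $4n/d\ge 8$), the error term $\lambda|U|$ is negligible compared to $\frac{d}{2n}|U|^2$; removing the at most $(d/(4n))^{t-2}m|U|/2 \le \frac12(d/n)|U|^2$ — wait, this needs care — edges of $G'$ costs at most $\frac{|U|}{2}\cdot(d/(4n))^{t-2}m$, which is much smaller than $\frac{d}{2n}|U|^2$ since $(d/(4n))^{t-2}m \le (d/(4n))^{t-2}d = \frac{d}{n}\cdot\frac{d^{t-2}}{4^{t-2}n^{t-3}}\le \frac{d}{n}|U|\cdot\frac{1}{4^{t-2}}$, using $|U|\ge (d/(4n))^{t-3}m$ only when $m$ is large; I will instead bound the $G'$-contribution by a small constant fraction of the main term using $\alpha$-type slack, so that $e((G\setminus G')[U]) = (1\pm o(1))\frac{d}{2n}|U|^2$, giving the $i=2$ bounds with plenty of room relative to the claimed constants $2^{\pm 4}/2$.

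For the inductive step, suppose the bounds hold for $i-1$. I would count copies of $K_i$ in $(G\setminus G')[U]$ by a standard "extend a clique by a common neighbour" argument. Fix a copy $K$ of $K_{i-1}$ inside $(G\setminus G')[U]$; the number of ways to extend it to a $K_i$ equals the number of vertices in $U$ adjacent in $G\setminus G'$ to all of $V(K)$. Iterating the expander mixing lemma (or, cleanly, using that for any vertex $v$ and any $W\subseteq V$ with $|W|$ not too small, $|N_G(v)\cap W| = \frac{d}{n}|W|\pm\lambda\sqrt{|W|/|N_G(v)|}\cdot\dots$ — more precisely, intersecting neighbourhoods one at a time), the common $G$-neighbourhood of the $i-1$ vertices of $K$ inside $U$ has size $(1\pm o(1))(d/n)^{i-1}|U|$, provided each intermediate common neighbourhood has size $\ge$ something like $\lambda(n/d)^{O(1)}$; this is exactly why the hypothesis $|U|\ge (d/(4n))^{t-i-1}m$ with $m\ge \lambda(4n/d)^{t-2}$ is calibrated the way it is — after $i-1$ restrictions the set has size $\gtrsim (d/n)^{i-1}|U|\ge (d/(4n))^{i-1}(d/(4n))^{t-i-1}m = (d/(4n))^{t-2}m\ge \lambda$, staying above the EML threshold. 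Removing edges of $G'$ kills at most $(i-1)\cdot(d/(4n))^{t-2}m$ candidates per $K_{i-1}$, again a negligible fraction. Summing over all copies of $K_{i-1}$ and dividing by $i$ to correct for overcounting, and multiplying the inductive constants by the extension factor, I get $(d/n)^{\binom{i-1}{2}+(i-1)} = (d/n)^{\binom{i}{2}}$ and the factorial $i!$, with the constant worsening by at most a factor $4^i$ or so per step — so after $t-1$ steps one loses at most $2^{O(t^2)}$, comfortably inside the stated $2^{\pm i^2}$ (I would just verify the crude bound $2^{i^2}\prod$ survives; the constants are deliberately wasteful).

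The main obstacle is the bookkeeping in the iterated expander mixing lemma: each time one intersects with another neighbourhood, the relative error grows, and one must check that after $i-1\le t-2$ steps the multiplicative error is still $1\pm o(1)$ (or at least within a bounded factor absorbed into $2^{\pm i^2}$), and that every intermediate set stays above the size threshold where EML is meaningful. Keeping the two-sided induction honest — the lower bound on an intermediate common neighbourhood is what licenses the next application of EML, and the upper bound feeds the overcounting correction — is the only delicate point; the removal of $G'$-edges is uniformly a lower-order perturbation throughout and I would dispatch it with a single crude inequality $(d/(4n))^{t-2}m \le (d/(4n))^{t-2}d \ll (d/n)^{i-1}|U|$ valid in the relevant range of $i$ and $|U|$.
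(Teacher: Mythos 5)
Your base case ($i=2$) coincides with the paper's, and the overall plan — induct on $i$, extend cliques by one vertex, and argue that deleting $G'$ is a lower-order perturbation — is in the right spirit. But the inductive step as you describe it has a genuine gap, and it is precisely the point where the paper does something different.

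You propose to fix a copy $K$ of $K_{i-1}$ and then iterate the expander mixing lemma to estimate, vertex by vertex, the common neighbourhood $N(v_1)\cap\dots\cap N(v_{i-1})\cap U$ to within a factor $1\pm o(1)$, claiming the intermediate sets ``stay above the EML threshold'' because their size is $\ge\lambda$. That threshold is wrong by a square. Applying Theorem~\ref{thm:EML} with $A=\{v\}$ and $B=W$ gives $\bigl|\,|N(v)\cap W|-(d/n)|W|\,\bigr|<\lambda\sqrt{|W|}$, and for this to control $|N(v)\cap W|$ up to a bounded multiplicative factor you need $\lambda\sqrt{|W|}\lesssim(d/n)|W|$, i.e.\ $|W|\gtrsim(\lambda n/d)^2$. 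The hypothesis $m\ge\lambda(4n/d)^{t-2}$ only guarantees the much weaker inequality $|W|\gtrsim\lambda n/d$ for the intermediate sets, and the gap between $\lambda n/d$ and $(\lambda n/d)^2$ is real: e.g.\ for $t=4$, $i=3$, $m=d/80$ and $\lambda=\Theta(d^3/n^2)$ one has $(\lambda n/d)^2/(d/n)=\Theta(d^3/n)$, which exceeds $|U|=\Theta(d)$ as soon as $d\gg\sqrt{n}$ — i.e.\ in essentially the entire regime where Proposition~\ref{prop:Kt_in_U} is invoked. Moreover, even if the size threshold were met, a per-clique estimate is strictly more than EML provides: some copies of $K_{i-1}$ will have anomalously large or small common neighbourhoods, and both sides of your induction (the upper bound for the overcount, the lower bound for the next restriction) require uniform control that simply is not available.

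The paper sidesteps this by never intersecting more than one neighbourhood per step and by using EML \emph{in aggregate} rather than per vertex. Concretely, for the inductive step one defines exceptional sets $X_1=\{v\in U:\deg(v,U)\ge 2d|U|/n\}$ and $X_2=\{v\in U:\deg(v,U)\le d|U|/(2n)\}$, bounds their sizes via EML as a \emph{fraction} of $|U|$ (this only needs $|U|\gtrsim\lambda n/d$, which the hypothesis does supply), and then applies the inductive hypothesis — a statement about counting $K_{i-1}$'s in a \emph{set} — to $N(v)\cap U$ for each non-exceptional $v$; the contribution of exceptional vertices is absorbed crudely. This is why the $\pm(4n/d)$ factor appears once per level rather than compounding: the inductive hypothesis does the averaging for you, whereas your iterated-EML scheme requires the averaging to be done afresh at each level and for each clique. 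If you replace your ``iterate EML $i-1$ times'' step by ``restrict once to $N(v)\cap U$, bound $|X_1|,|X_2|$, and invoke the inductive hypothesis on $N(v)\cap U$,'' you essentially recover the paper's argument.
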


\begin{proof}
  Let $m_i = (d/(4n))^{t-i-1}m$ for $2\le i\le t-1$.  Hence
  $\lambda \le m (d/(4n))^{t-2} = m_i (d/(4n))^{i-1}$.  Suppose
  $|U|\ge m_i$.  We prove by induction on~$i$ that the stated
  estimates hold.  Note that, since $i\ge 2$, we have
  $\Delta(G')\le (d/(4 n))^{t-2}m\le d m_i/(4n)\le d |U|/(4n)$.  Let
  first~$i=2$.  Theorem~\ref{thm:EML} implies that
  $\big|2e_G(U) - (d/n)|U|^2\big|\le\lambda |U|$.  Since
  $\lambda \le |U|(d/(4n))$, we have
  $(3/8)(d/n)|U|^2\le e_G(U)\le (5/8)(d/n)|U|^2$.  Hence the number of
  edges in $(G\setminus G')[U]$ is at most $(5/8)(d/n)|U|^2$ and at
  least $(3/8)(d/n)|U|^2 - |U|\cdot d |U|/(4n) = (1/8)(d/n)|U|^2 $,
  which verifies our claim for~$i=2$.
  Now suppose $3\le i\le t-1$ and that the estimates hold for smaller
  values of~$i$.  Note that, in particular, we have $t\ge 4$.  Let
  $X_1$ be the set of vertices $v\in U$ such that
  $\deg(v, U)\ge 2d|U|/n$.  By the definition of $X_1$ and
  Theorem~\ref{thm:EML}, we have
  \[
    \frac{2d}{n} |U| |X_1| \le e(U, X_1)  \le \frac dn  |U| |X_1| +
    \lambda \sqrt{|U| |X_1|}. 
  \]
  Together with $\lambda \le m_i (d/(4n))^{i-1}\le |U| (d/(4n))^{i-1}$
  and $i\ge 3$, this implies that
  \[
    |X_1|\le \left( \frac{\lambda n}d\right)^2\frac1{|U|} \le
    \frac{|U|}{16} \left(\frac d{4n}\right)^{2i-4} \le \frac{|U|}{16}
    \left(\frac d{4n}\right)^{i-1}.
  \]
  Note that $2d|U|/n \ge 2d m_i/n \ge m_{i-1}$.  By the inductive
  hypothesis, the number of copies of $K_{i}$ in $(G\setminus G')[U]$
  is at most
  \[
    \frac1{i}\left( |X_1|\cdot
      \frac{2^{(i-1)^2}}{(i-1)!}{|U|}^{i-1}\left( \frac dn
      \right)^{\binom{i-1}{2}} + |U| \cdot
      \frac{2^{(i-1)^2}}{(i-1)!}\left(\frac{2d|U|}{n}\right)^{i-1}\left(
        \frac dn \right)^{\binom{i-1}{2}} \right) \le
    \frac{2^{i^2}}{i!} |U|^{i}\left( \frac dn \right)^{\binom{i}{2}},
  \]
  where the $1/i$ factor avoids counting~$i$ times each copy of~$K_i$.

  Similarly, let $X_2$ be the set of vertices $v\in U$ such that
  $\deg(v, U)\le d|U|/(2n)$.  By the definition of $X_2$ and
  Theorem~\ref{thm:EML}, we have
  \[
    \frac dn  |U| |X_2| - \lambda \sqrt{|U| |X_2|} \le e(U, X_2)\le
    \frac{d}{2n} |U| |X_2|.  
  \]
  This implies that $|X_2|\le (2\lambda n/d)^2/|U| \le |U|/4$.
  Note that $d|U|/(4n) \ge d m_i/(4n) = m_{i-1}$.
  By the inductive hypothesis, the number of copies of $K_i$ in
  $(G\setminus G')[U]$ is at least 
  \[
    \frac1i \cdot(|U|-|X_2|)
    \frac{1}{2^{(i-1)^2}(i-1)!}\left(\frac{d|U|}{2n} -
      \frac{d|U|}{4n}\right)^{i-1}\left( \frac dn
    \right)^{\binom{i-1}{2}} \ge \frac{1}{2^{i^2}i!} |U|^{i}\left(
      \frac dn \right)^{\binom{i}{2}}
  \]
  (the~$1/i$ factor takes care of the fact that each copy of $K_i$ is
  counted at most $i$ times).
\end{proof}

We remark that a better estimate on the number of copies of~$K_i$ in
$m$-vertex subgraphs of $(n,d,\lambda)$-graphs~$G$, of the form
$(1+o(1))\binom m i(d/n)^{\binom{i}{2}}$, was obtained by Alon
(see~\cite[Theorem~4.10]{KS06}).  Furthermore, a Tur\'an-type result
was proven by Sudakov, Szab\'o and Vu~\cite{SSV} for the containment
of a copy of~$K_i$ in dense enough subgraphs of
$(n,d,\lambda)$-graphs.  However, both results require a stronger
condition on~$\lambda$ than the one we shall have available in our
applications of Proposition~\ref{prop:Kt_in_U} (see
Sections~\ref{sec:dense} and~\ref{sec:sparse}).

\section{Proof outline}\label{sec:outline}
In the following we provide a proof overview in the case of triangles,
since the general case is similar.  Our arguments combine tools from
linear programming with probabilistic techniques.  In fact, they can
be seen as a synthesis of some methods in Alon, Frankl, Huang, R\"odl,
Ruci\'nski and Sudakov~\cite{AFHRRS12} and in Krivelevich, Sudakov and
Szab\'o~\cite{KSS04}.

Let an $(n,d,\lambda)$-graph~$G$ with $\lambda\le c d^2/n$ be given.
From the expander mixing lemma, Theorem~\ref{thm:EML}, it follows that
every vertex of~$G$ lies in
$\frac{1}{2}\left(d^3/n\pm\lambda d\right)=(d^3/2n)\left(1\pm
  c\right)$ triangles.  The na\"ive greedy approach outlined in the
introduction (see the discussion soon after Theorem~\ref{thm:main})
does not guarantee a collection of $(1-o(1))n/3$ vertex-disjoint
triangles.  Another attempt would be to apply some theorem that would
tell us that the $3$-uniform hypergraph~$\cK_3(G)$ of the triangles
in~$G$ contains an almost perfect matching.  A theorem of Pippenger
(see~\cite{fueredi88:_match}) would do if we knew that~$\cK_3(G)$ is
pseudorandom enough (roughly speaking, one needs that~$\cK_3(G)$
should be approximately $\ell$-regular for some~$\ell\to\infty$ and
that pairs of vertices of~$\cK_3(G)$ should be contained in~$o(\ell)$
triples of~$\cK_3(G)$ (i.e., the `codegrees' should be small)).
However, for~$c$ an absolute constant, this property of~$\cK_3(G)$
cannot be deduced.

We circumvent the fact that~$\cK_3(G)$ is not necessarily pseudorandom
enough by finding a subhypergraph~$H$ of~$\cK_3(G)$ in which the
`deviation' of the number of triangles at any vertex is `smoothed out'
(thus~$H$ will be almost $\ell$-regular).
This can be done if~$G$ has $\ell=n^{\Theta(1)}$ fractional
$K_3$-factors $f_1,\dots, f_\ell$ such that
$\sum_{i=1}^\ell f_i(T)\le 1$ for each $T\in \cK_3(G)$ and, for any
edge $e\in E(G)$, the sum of the weights on the triangles containing
$e$ across~$f_1,\dots,f_\ell$ is at most $\ell^{1-\gamma}$ for some
$\gamma\in (0,1)$.  This latter condition helps us force small
codegrees.

Indeed, with these fractional $K_3$-factors, we can
select~$H\subset\cK_3(G)$ at random, by including each $T\in \cK_3(G)$
in~$H$ independently with probability $\sum_{i=1}^\ell f_i(T)$.  Then
Chernoff's inequality guarantees that~$H$ satisfies, with high
probability, the assumptions of Theorem~\ref{thm:Pippenger}, which is
a packing result from~\cite{KoRo} strengthening the aforementioned 
result of Pippenger.  Such a `randomization' strategy has previously
been successfully employed in~\cite{AFHRRS12} in the context of
perfect matchings in hypergraphs.

Thus, it suffices to find such fractional $K_3$-factors
$f_1,\dots, f_\ell$.
In fact, we find such~$f_i$ with the property
that, for any $e\in E(G)$, we have
$\sum_{e\in E(T)}\sum_{i=1}^\ell f_i(T)\le 1$ (hence
$\sum_{i=1}^\ell f_i(T)\le 1$ for each $T\in \cK_3(G)$ is
automatically true).

Theorem~\ref{thm:main} is vacuously true for~$d=o(n^{2/3})$ when~$t=3$
(owing to Fact~\ref{fact:d_bound}).  We thus
suppose~$d=\Omega(n^{2/3})$.  We consider two cases.  We pick any
$\beta\in (0, 1/3)$ independent of~$n$.  Our first approach
(Theorem~\ref{cor:rand_I}) works as long as $d$ is not too small, say,
$d\ge n^{(2/3)+\beta}$.  In contrast, the second approach
(Theorem~\ref{cor:rand_II}) works as long as $d$ is not too large,
say, $d\le n^{1-\beta}$.

In the first approach, we consider edge-weighted graphs and we
repeatedly `remove' fractional $K_3$-factors from~$G$ (removing from
edges~$e$ the weights of the triangles~$T$ with $e\subseteq V(T)$).  
This is done by Theorem~\ref{cor:rand_I} below, in which we show that
we can repeatedly apply Corollary~\ref{cor:ff} in the remaining
weighted graph $n^\beta$ times.   

When $d$ is close to $n^{2/3}$, our approach above fails because we
cannot execute it sufficiently many times.  To circumvent this, we
randomly split~$E(G)$ into $\ell=n^{\Omega(1)}$ sets
$E_1,\dots,E_\ell$, with each subgraph~$G_i:=(V,E_i)$ distributed as a
random subgraph~$G_p$ of~$G$, where each edge is included in~$G_p$
with probability~$p=1/\ell$, independently of all the other edges.
Then we show that with high probability each~$G_i$ satisfies the
assumptions of Corollary~\ref{cor:ff}, and thus contains a fractional
$K_3$-factor~$f_i$.  This second approach works only
for~$d\le n^{1-o(1)}$, which makes both approaches necessary.

\section{Fractional $K_t$-factors: the dense case}
\label{sec:dense}
In this section we prove the following theorem.

\begin{theorem}
  \label{cor:rand_I}
  For any integer $t\ge 3$ and $\beta\in (0, 1/(2t-3)]$, there exists
  $n_0$ such that every $(n,d,\lambda)$-graph $G$ with $n\ge n_0$,
  $d\ge n^{1-1/(2t-3)+\beta}$ and
  $\lambda\le (1/(20t\cdot 4^{t-2}))d^{t-1}/n^{t-2}$
  contains~$\ell=n^{\beta}$ fractional $K_t$-factors
  $f_1,\dots, f_\ell$ such that
  \begin{equation}
    \label{eq:1}
    \sum_{i=1}^\ell \sum_{T\in\cK_t(G)\colon V(T)\supset \{u,v\}}
    f_i(T)\le 1
    \text{ for every edge }uv\in E(G).
  \end{equation}
\end{theorem}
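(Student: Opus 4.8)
The plan is to build the desired fractional $K_t$-factors one at a time, working along a sequence of weighted graphs $(G,w_0),(G,w_1),\dots,(G,w_\ell)$, where $\ell=n^\beta$. We put $w_0\equiv 1$; having produced $f_1,\dots,f_{i-1}$ and $w_{i-1}$, we will apply Corollary~\ref{cor:ff} to the weighted graph $(G,w_{i-1})$ to obtain a fractional $K_t$-factor $f_i$ of $(G,w_{i-1})$, and then set
\[
  w_i(uv):=w_{i-1}(uv)-\sum_{T\in\cK_t(G)\colon V(T)\supseteq\{u,v\}}f_i(T)
  \qquad(uv\in E(G)).
\]
Since $f_i$ respects $w_{i-1}$ in the sense of~\eqref{eq:ftf_weight}, the subtracted amount never exceeds $w_{i-1}(uv)$, so $0\le w_i(uv)\le w_{i-1}(uv)\le 1$ and $(G,w_i)$ is again a weighted graph. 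Telescoping over $i=1,\dots,\ell$ then yields $\sum_{i=1}^{\ell}\sum_{T\supseteq\{u,v\}}f_i(T)=1-w_\ell(uv)\le 1$, which is exactly~\eqref{eq:1}. Hence the entire task reduces to checking that Corollary~\ref{cor:ff} really does apply at each step $i\le\ell$.

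Throughout, fix $\alpha:=1/(8t^2)$ (so $\alpha<1/(7t^2)$), $D:=d/2$ and $m:=d/(20t)$; then $3\le D\le n/2$ for $n\ge n_0$, and the hypothesis on $\lambda$ is exactly the statement $\lambda(4n/d)^{t-2}\le m$ (and clearly $m\le d$), so Proposition~\ref{prop:Kt_in_U} is available with parameter $m$. The single fact that keeps the iteration under control is a bound on the `lost' edges. Let $H_i$ be the subgraph of $\alpha$-rich edges of $(G,w_{i-1})$ and let $G'_i:=G\setminus H_i$. Each fractional $K_t$-factor $f_j$ satisfies $\sum_{T\ni v}f_j(T)=1$ for every vertex $v$, and each copy $T\ni v$ removes weight from exactly $t-1$ edges at $v$; thus the edges at $v$ lose, over the first $i-1\le\ell$ steps, a total of at most $(t-1)\ell$ units of weight, and so fewer than $(t-1)\ell/\alpha$ of them can have weight below $1-\alpha$. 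Therefore $\Delta(G'_i)\le(t-1)n^\beta/\alpha$, and since $d\ge n^{1-1/(2t-3)+\beta}$ one checks $(t-1)n^\beta/\alpha\le(d/(4n))^{t-2}m$ for $n\ge n_0(t)$; consequently Proposition~\ref{prop:Kt_in_U} applies to $(G\setminus G'_i)[U]=H_i[U]$ for every vertex set $U$ as in its statement, and moreover every vertex has at least $d-\Delta(G'_i)\ge 0.9d$ $\alpha$-rich neighbours.

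With this in hand, the three hypotheses of Corollary~\ref{cor:ff} follow from short greedy arguments: at each stage we use Theorem~\ref{thm:EML} to locate a vertex set of size at least $m$, use Proposition~\ref{prop:Kt_in_U} (with its parameter taken to be $t-1$) to find a copy of $K_{t-1}$ of $H_i$ inside it, and append an apex vertex adjacent in $H_i$ to all of it, obtaining an $\alpha$-rich copy of $K_t$. For hypothesis~\ref{cor:1}, given $v$ we repeatedly take $\ge m$ not-yet-used $\alpha$-rich neighbours of $v$ and extract such a $K_{t-1}$; as $v$ has $\ge 0.9d$ $\alpha$-rich neighbours and each copy uses only $t-1$ of them, this produces $D/(t-1)$ copies of $K_t$ pairwise meeting in exactly $\{v\}$. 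For hypothesis~\ref{cor:2}, any $U$ with $|U|=0.11n/t$ contains, by Theorem~\ref{thm:EML}, a vertex $v$ with $\deg_{H_i}(v,U)\ge m$, and a $K_{t-1}$ of $H_i$ in $N_{H_i}(v)\cap U$ together with $v$ is the desired $K_t$ of $H_i[U]$. For hypothesis~\ref{cor:3}, given $U$ with $|U|\ge n-D$ and $U_0\subseteq U$ with $|U_0|=D/t$, we build $\cT_0$ greedily: while fewer than $0.2n/(t-1)$ copies have been found, the set $W$ of already-used vertices outside $U_0$ has size below $0.2n$, so $|U\setminus U_0\setminus W|\ge 0.13n$, and Theorem~\ref{thm:EML} supplies some $u_0\in U_0$ with at least $m$ $\alpha$-rich neighbours in $U\setminus U_0\setminus W$; a $K_{t-1}$ of $H_i$ among those neighbours, together with $u_0$, is an $\alpha$-rich $K_t$ with exactly one vertex in $U_0$ that meets every previously chosen copy only possibly at that vertex. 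Iterating gives property $\cP(t,D,0.2n,n)$ for $H_i$, and the verification is complete.

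The step I expect to be the main obstacle is the verification of hypothesis~\ref{cor:3} (property $\cP$): since $U_0$ is arbitrary, we cannot assume any fixed vertex of $U_0$ has many neighbours in the target set, so we must instead argue, via Theorem~\ref{thm:EML}, that at each greedy stage \emph{some} vertex of $U_0$ is usable. For the error term $\lambda\sqrt{|U_0|\,|U\setminus U_0\setminus W|}$ in~\eqref{eq:EML} to be negligible compared with the main term $(d/n)|U_0|\,|U\setminus U_0\setminus W|$ we need $|U_0|=D/t$ to have order $d$, which is why $D$ is taken proportional to $d$ rather than to $n$ --- fortunately this is also exactly what hypothesis~\ref{cor:1} demands, so the two requirements are compatible. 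This is precisely where the quantitative hypothesis $\lambda\le\big(1/(20t\cdot4^{t-2})\big)d^{t-1}/n^{t-2}$ is used in an essential way; the remaining pieces --- the degree bound on $G'_i$, hypotheses~\ref{cor:1} and~\ref{cor:2}, and the fact that~\eqref{eq:1} comes for free --- are routine applications of Proposition~\ref{prop:Kt_in_U} and Theorem~\ref{thm:EML}.
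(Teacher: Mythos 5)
Your proposal is correct and follows essentially the same strategy as the paper: iteratively apply Corollary~\ref{cor:ff}, subtract the extracted fractional $K_t$-factor from the weights, observe that the weighted degree of every vertex drops by exactly $t-1$ per step, and verify that the set $G'_i$ of non-$\alpha$-rich edges has small maximum degree so that Proposition~\ref{prop:Kt_in_U} can be used to build the $\alpha$-rich cliques needed for Corollary~\ref{cor:ff}. The only (cosmetic) differences are that you take a constant richness threshold $\alpha=1/(8t^2)$ and bound $\Delta(G'_i)$ directly from the cumulative weight deficit $(t-1)\ell$, whereas the paper picks $\alpha=(d/(4n))^{t-2}/(20t)$ and invokes Fact~\ref{fact:a_rich}, and you re-derive the conclusions of Propositions~\ref{prop:w_triangles} and~\ref{prop:many_triangles_w} greedily rather than citing them -- but both routes yield the same required bound $\Delta(G'_i)\le(d/(4n))^{t-2}d/(20t)$, and your exponent check for $\ell=n^\beta$ iterations goes through.
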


\subsection{Proof idea}

Our main idea for proving Theorem~\ref{cor:rand_I} is to view the
$(n,d,\lambda)$-graph $G=(V,E)$ as a graph equipped with the weight
function $w\colon E\to[0,1]$.  Once we manage to find a fractional
$K_t$-factor $f$ in $(G,w)$ (by Corollary~\ref{cor:ff}), we update the
weight function $w$ as follows:
$w(uv):=w(uv)-\sum_{T\in \cK_t(G)\colon V(T)\supset\{u,v\}} f(T)$,
which remains non-negative by condition~\eqref{eq:ftf_weight} from
Section~\ref{sec:LP}.  Moreover, the weighted degree of every vertex
decreases by \emph{exactly} $t-1$, since
$\sum_{u\in N(v)}\sum_{T\in \cK_t(G)\colon V(T)\supset\{u,v\}}
f(T)=(t-1)\sum_{T\in\cK_t(G)\colon V(T)\ni v} f(T)=t-1$.  Therefore,
if in our graph $(G,w)$ all weighted degrees were the same, then,
after updating $w$, the weighted degrees stay the same.  To prove
Theorem~\ref{cor:rand_I}, it suffices to show that we can iterate this
procedure $\ell=n^{\beta}$ times.


\subsection{Cliques in weighted subgraphs of $(n,d,\lambda)$-graphs}  

In the next two propositions, we use Proposition~\ref{prop:Kt_in_U} to
derive the assumptions of Corollary~\ref{cor:ff}.  Recall that, given
a graph~$G$ and~$v\in V(G)$, we denote by~$\cT_v$ a family of copies
of $K_t$ in $G$ with $v\in T$ for every $T\in \cT_v$ and with
$V(T)\cap V(T')=\{v\}$ for all distinct $T$ and~$T'$ in~$\cT_v$.

\begin{proposition}
  \label{prop:w_triangles}
  For any integer $t\ge 3$, there exists $n_0$ such that every
  $(n,d,\lambda)$-graph $G$ with $n\ge n_0$ and
  $\lambda\le (1/(20t\cdot 4^{t-2}))d^{t-1}/n^{t-2}$ satisfies the
  following.  Let $G'$ be a graph on $V(G)$ with maximum degree at
  most $(d/(4 n))^{t-2} d/(20t)$.  Then for every $v\in V(G)$ there
  exists a family $\cT_v$ of at least $d/(2t-2)$ copies of $K_t$ in
  $G\setminus G'$.
\end{proposition}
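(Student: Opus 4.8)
The plan is to construct the family $\cT_v$ greedily, peeling off one copy of $K_t$ through $v$ at a time, and to use Proposition~\ref{prop:Kt_in_U} to guarantee that at every stage there are still enough copies of $K_{t-1}$ in the link of $v$ that avoid all previously used vertices. First I would fix $m:=d/(20t)$, which satisfies $\lambda(4n/d)^{t-2}\le m\le d$ by the hypothesis $\lambda\le (1/(20t\cdot 4^{t-2}))d^{t-1}/n^{t-2}$, so Proposition~\ref{prop:Kt_in_U} is applicable with this value of~$m$. The idea is that a copy of $K_t$ through $v$ corresponds to a copy of $K_{t-1}$ inside $N_G(v)$, and by the expander mixing lemma (Theorem~\ref{thm:EML}) the neighbourhood $N_G(v)$ has size exactly $d$, which is comfortably larger than $m_{t-1}:=(d/(4n))^{0}m=m$, the threshold in Proposition~\ref{prop:Kt_in_U} for $i=t-1$.

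The key steps, in order, are as follows. (1) Set $U:=N_G(v)$, $|U|=d$. We want to count copies of $K_{t-1}$ in $(G\setminus G')[U]$; since $G'$ has maximum degree at most $(d/(4n))^{t-2}m$, Proposition~\ref{prop:Kt_in_U} with $i=t-1$ gives at least $2^{-(t-1)^2}(t-1)!^{-1}|U|^{t-1}(d/n)^{\binom{t-1}{2}}$ such copies, which is of order $d^{t-1}(d/n)^{\binom{t-1}{2}}$. Each such $K_{t-1}$ together with $v$ gives a $K_t$ in $G\setminus G'$ through $v$ (the edges from $v$ to $U$ lie in $G$ and we may assume $G'$ contains none of them, or rather: $\cT_v$ is a family in $G$, while the $K_{t-1}$'s avoid edges of $G'$; one checks the edges $v u$ with $u\in U$ are edges of $G$, which is all that is required for membership in $\cT_v$ as defined for~$G$ — but to be safe one works throughout in $G\setminus G'$, noting that deleting from $G$ the at most $\Delta(G')$ edges of $G'$ at $v$ only shrinks $U$ by a negligible amount). (2) Build $\cT_v$ greedily: having chosen $j$ pairwise-$\{v\}$-intersecting copies $T_1,\dots,T_j$, let $W_j:=\bigcup_i V(T_i)\setminus\{v\}$, so $|W_j|=j(t-1)$. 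We need a $K_{t-1}$ in $(G\setminus G')[U\setminus W_j]$, i.e.\ in $(G\setminus G')$ restricted to a vertex set of size $d-j(t-1)$. As long as $d-j(t-1)\ge m = d/(20t)$ — equivalently $j\le (d-d/(20t))/(t-1)$, which certainly holds for all $j< d/(2t-2)$ — Proposition~\ref{prop:Kt_in_U} applied with $U':=U\setminus W_j$ and $i=t-1$ gives at least one (indeed many) copies of $K_{t-1}$ in $(G\setminus G')[U']$, so we can pick $T_{j+1}$. (3) Iterate until $j=d/(2t-2)$; this yields the required family $\cT_v$ of at least $d/(2t-2)$ copies of $K_t$ in $G\setminus G'$, each containing $v$ and pairwise intersecting exactly in~$\{v\}$.

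The only delicate point — and the step I expect to be the main obstacle — is the bookkeeping needed to apply Proposition~\ref{prop:Kt_in_U} at each stage: one must check that throughout the greedy process the relevant vertex set $U\setminus W_j$ stays above the threshold $m_{t-1}=m=d/(20t)$, and that the maximum-degree hypothesis on $G'$ is exactly matched (Proposition~\ref{prop:Kt_in_U} allows $\Delta(G')\le (d/(4n))^{t-2}m$, and here $\Delta(G')\le (d/(4n))^{t-2}d/(20t)=(d/(4n))^{t-2}m$, so it is tight by design). A minor subtlety is that Proposition~\ref{prop:Kt_in_U} requires $n\ge n_0$ and $|U|\ge m_i$ with the range $2\le i\le t-1$, and needs $t\ge 4$ only internally for its induction; for $t=3$ one uses the $i=2$ base case of Proposition~\ref{prop:Kt_in_U} directly, which just counts edges via Theorem~\ref{thm:EML}, so no separate argument is needed. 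Finally, one records that $d/(2t-2)\le (d - d/(20t))/(t-1)$ holds for all $t\ge 3$, which is the elementary inequality underpinning step~(2); this is the routine calculation I would relegate to a single line.
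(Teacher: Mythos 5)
Your proposal is correct and follows essentially the same approach as the paper: the paper also sets $U:=N_{G\setminus G'}(v)$ (so $|U|\ge 0.9d$), assumes a family $\cT_v$ with $|\cT_v|<d/(2t-2)$, notes $|U\setminus\bigcup_{T\in\cT_v}V(T)|\ge 0.9d-d/2\ge d/(20t)$, and applies Proposition~\ref{prop:Kt_in_U} with $m=d/(20t)$ and $i=t-1$ to extend the family. The only difference is presentational (maximal-family contradiction versus greedy construction), and your remarks about working in $G\setminus G'$ and about the base case $i=2$ when $t=3$ are exactly the points the paper's argument implicitly relies on.
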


\begin{proof}
  Fix a vertex $v\in V(G)$ and let $U=N_{G\setminus G'}(v)$.  Then
  $|U|\ge (1-d/(80t n))d \ge 0.9d$.  Let $\cT_v$ be a collection of
  copies of $K_t$ in $G\setminus G'$ such that $|\cT_v|< d/(2t-2)$.
  Then
  $|U\setminus \bigcup_{T\in \cT_v} V(T)|\ge 0.9d - d/2 \ge d/(20t)$.
  By Proposition~\ref{prop:Kt_in_U},
  $G[U\setminus \bigcup_{T\in \cT_v} V(T)]$ contains a copy of
  $K_{t-1}$ in $G\setminus G'$, which, together with~$v$, gives a copy
  of $K_t$ in $G\setminus G'$.  The proposition follows.
\end{proof}

The following proposition establishes Property $\cP(t, D, D', n)$ from
Section~\ref{sec:P_ndl} in subgraphs of $(n,d,\lambda)$-graphs for
certain values of the parameters.

\begin{proposition}
  \label{prop:many_triangles_w}
  For any integer $t\ge 3$, there exists $n_0$ such that every
  $(n,d,\lambda)$-graph $G$ with $n\ge n_0$ and
  $\lambda\le (1/(20t\cdot 4^{t-2}))d^{t-1}/n^{t-2}$ satisfies the
  following.  Let $G'$ be a graph on $V(G)$ with maximum degree at
  most $(d/(4 n))^{t-2} d/(20t)$.  Then every $0.11n/t$ vertices of
  $G\setminus G'$ span a copy of $K_t$ in~$G\setminus G'$.   Moreover,
  $G\setminus G'$ has property $\cP(t, d/2, 0.2n, n)$.
\end{proposition}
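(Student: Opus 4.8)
The plan is to derive both assertions from Proposition~\ref{prop:Kt_in_U}, invoked throughout with the parameter $m$ set to $m:=d/(20t)$. With this choice the hypothesis $\lambda\le(1/(20t\cdot4^{t-2}))d^{t-1}/n^{t-2}$ reads $\lambda\le(d/(20t))(d/(4n))^{t-2}$, so $\lambda(4n/d)^{t-2}\le m\le d$, while the assumed bound $\Delta(G')\le(d/(4n))^{t-2}d/(20t)$ is exactly $(d/(4n))^{t-2}m$; hence Proposition~\ref{prop:Kt_in_U} applies (for $n$ large), and for $i=t-1$ it needs a vertex set of size only $(d/(4n))^{0}m=m=d/(20t)$, in which case, by Fact~\ref{fact:d_bound}, it guarantees $n^{\Omega(1)}$ copies of $K_{t-1}$ in $G\setminus G'$, in particular at least one. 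I will also use the crude bounds $\lambda<d/(80t)$ and $\Delta(G')\le d/(80t)$, which hold since $(d/(4n))^{t-2}\le1/4$ for $t\ge3$.

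For the first assertion I would fix $U\subseteq V(G)$ with $|U|=0.11n/t$. Theorem~\ref{thm:EML} with $A=B=U$ gives $e_G(U,U)\ge(d/n)|U|^2-\lambda|U|$, so the average of $\deg_G(v,U)$ over $v\in U$ is at least $(d/n)|U|-\lambda$, and hence some $v\in U$ satisfies $\deg_{G\setminus G'}(v,U)\ge(d/n)|U|-\lambda-\Delta(G')\ge0.085\,d/t\ge m$. Applying Proposition~\ref{prop:Kt_in_U} with $i=t-1$ to the vertex set $N_{G\setminus G'}(v)\cap U$, which has at least $m$ vertices, then produces a copy of $K_{t-1}$ of $G\setminus G'$ inside $N_{G\setminus G'}(v)\cap U$, and adding $v$ gives a copy of $K_t$ in $(G\setminus G')[U]$.

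For property $\cP(t,d/2,0.2n,n)$ I would fix $U$ with $|U|\ge n-d/2$ and $U_0\subseteq U$ with $|U_0|=d/(2t)$, and choose a family~$\cT_0$ of copies of $K_t$ satisfying conditions~\ref{pr:one}--\ref{pr:three} that is \emph{maximal} (the empty family qualifies). Put $W:=\bigl(\bigcup_{T\in\cT_0}V(T)\bigr)\setminus U_0\subseteq U\setminus U_0$; condition~\ref{pr:three} gives $|W|=(t-1)|\cT_0|$, so it suffices to rule out $|\cT_0|<0.2n/(t-1)$, equivalently $|W|<0.2n$. Assuming $|W|<0.2n$, maximality forces that for every $u\in U_0$ the set $N_u':=N_{G\setminus G'}(u)\cap(U\setminus U_0)\setminus W$ spans no copy of $K_{t-1}$ of $G\setminus G'$ (otherwise such a $K_{t-1}$ together with~$u$ would enlarge~$\cT_0$ while preserving~\ref{pr:one}--\ref{pr:three}), so Proposition~\ref{prop:Kt_in_U} with $i=t-1$ yields $|N_u'|<m=d/(20t)$. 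Summing $|N_{G\setminus G'}(u)\cap(U\setminus U_0)|\le|N_u'|+|N_{G\setminus G'}(u)\cap W|$ over $u\in U_0$, using $\sum_{u\in U_0}|N_{G\setminus G'}(u)\cap W|\le e_G(U_0,W)$ and $\sum_{u\in U_0}|N_{G\setminus G'}(u)\cap(U\setminus U_0)|\ge e_G(U_0,U\setminus U_0)-|U_0|\Delta(G')$, and bounding $e_G(U_0,W)$ from above and $e_G(U_0,U\setminus U_0)$ from below by Theorem~\ref{thm:EML}, I would divide through by $|U_0|=d/(2t)$. With $|W|<|U\setminus U_0|\le n$ and $|U\setminus U_0|-|W|>2n/15$ (since $|U\setminus U_0|\ge n-d/2-d/(2t)\ge n/3$ as $d<n$ and $|W|<0.2n$), this collapses to $2d/15<2\lambda\sqrt{2tn/d}+\Delta(G')+d/(20t)$, hence $\lambda>(9/160)d^{3/2}/\sqrt{2tn}$. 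Since $t\ge3$ and $d<n$, the ratio of this lower bound to the hypothesised upper bound $(1/(20t\cdot4^{t-2}))d^{t-1}/n^{t-2}$ equals $\tfrac{9}{8}\sqrt{t/2}\,4^{t-2}(n/d)^{(2t-5)/2}>1$, a contradiction; therefore $|\cT_0|\ge0.2n/(t-1)$, as required.

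The step I expect to be the main obstacle is exactly this last contradiction, because the set~$W$ that must be avoided is of macroscopic size $\Theta(n)$: the expander mixing lemma cannot usefully bound the degree $\deg_G(u,W)$ of a \emph{single} vertex~$u$ into~$W$, since its error term $\lambda\sqrt{|W|}$ can exceed~$d$ when $d=n^{1-\Omega(1)}$ and $\lambda$ is near its allowed maximum. The remedy is to estimate the \emph{aggregate} $e_G(U_0,W)=\sum_{u\in U_0}|N_G(u)\cap W|$ directly: because $|U_0|=d/(2t)$ is small, the mixing-lemma error is only $\lambda\sqrt{|U_0||W|}$, carrying the favourable factor $\sqrt{|U_0|}$ rather than $|U_0|$, and this gain is precisely what makes the arithmetic close. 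Verifying the numerical constants for all $t\ge3$ (and checking that the tightest case $t=3$ still leaves room) is then routine.
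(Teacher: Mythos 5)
Your proof is correct and follows essentially the same strategy as the paper's. For the first assertion (every $0.11n/t$ vertices span a $K_t$) your argument coincides with the one the paper sketches and leaves to the reader: bound the average degree in $(G\setminus G')[U]$ via the mixing lemma, take a vertex of (near-)maximum degree, and apply Proposition~\ref{prop:Kt_in_U} with $i=t-1$ to its neighbourhood inside $U$. For property $\cP(t,d/2,0.2n,n)$, the paper fixes a subset $U'\subseteq (U\setminus U_0)\setminus W$ of size exactly $n/(10t)$, shows $e_G(U_0,U')\ge d^2/(30t^2)$ by the mixing lemma, extracts a single $v\in U_0$ with degree at least $d/(15t)$ into $U'$, and applies Proposition~\ref{prop:Kt_in_U} to $N_{G\setminus G'}(v)\cap U'$; you instead invoke the contrapositive ($|N_u'|<m$ for every $u\in U_0$) and aggregate over $U_0$, applying the mixing lemma to $e_G(U_0,U\setminus U_0)$ and to $e_G(U_0,W)$ separately. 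These are the same argument up to reorganization (direct extraction of a high-degree vertex versus counting by contradiction), and both hinge on the observation you highlight at the end, namely that $|U_0|=D/t$ is small so the mixing-lemma error carries only the favourable factor $\sqrt{|U_0|}$; the paper's fixed-size $U'$ merely sidesteps the extra bound on $e_G(U_0,W)$ that your version requires.
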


\begin{proof} 
  We first prove that $G\setminus G'$ has property
  $\cP(t, d/2, 0.2n, n)$.  Set $c:=1/(20t\cdot 4^{t-2})$ so that
  $\lambda\le c d^{t-1}/n^{t-2}$.  Let $\cT_0$ be a family of copies
  of $K_t$ in $G\setminus G'$ of maximum cardinality satisfying
  properties~\eqref{pr:one}--\eqref{pr:three} in the definition of
  $\cP(t, d/2, 0.2n, n)$.  If $|\cT_0|<0.2n/(t-1)$, then let
  $W:=\left(\bigcup_{T\in \cT_0}V(T)\right)\setminus U_0$. It follows
  that $|W|=(t-1)|\cT_0|<0.2n$.  Because of the maximality of $\cT_0$,
  there are no copies of $K_t$ in $G\setminus G'$ with one vertex from
  $U_0$ and the other $t-1$ from $U\setminus(U_0\cup W)$.  Note that
  $|U\setminus(U_0\cup W)|\ge n-d/2-d/(2t)-0.2n\ge n/(10t)$.  Take any
  subset $U'\subset U\setminus(U_0\cup W)$ of cardinality $n/(10t)$
  and note that, by Theorem~\ref{thm:EML}, we have
  \[
    e_G(U_0,U')\ge \frac{d}n \cdot \frac{n}{10t} \cdot
    \frac{d}{2t}-\lambda\sqrt{\frac{n}{10t} \cdot \frac{d}{2t}}\ge
    \frac{d^2}{20t^2} - c \frac{d^{t-1}}{n^{t-3}}\sqrt{\frac{d}{20t^2
        n}}\ge \frac{d^2}{30t^2}.
  \]
  Thus, there is a vertex $v\in U_0$ of degree at least $d/(15t)$ into
  $U'$ in $G$ and hence~$v$ is connected to at least
  $d/(15t) - \Delta(G') \ge d/(15t) - d/(80t)\ge d/(20t)$ vertices in
  $U'$ via edges in $G\setminus G'$.  Let
  $R:=N_{G\setminus G'}(v)\cap U'$.  We have~$|R|\ge d/(20t)$.  By
  Proposition~\ref{prop:Kt_in_U}, $(G\setminus G')[R]$ contains a copy
  of $K_{t-1}$, which, together with $v$, gives a copy of $K_t$ in
  $G\setminus G'$.  This contradicts the maximality of~$\cT_0$.  This
  shows that $|\cT_0|\geq0.2n/(t-1)$, as required.

  It remains to prove that every set $U$ of $0.11n/(t-1)$ vertices of
  $G\setminus G'$ spans a copy of~$K_t$ in~$G\setminus G'$.  This can
  be done in a similar way by showing that Theorem~\ref{thm:EML}
  implies that the average degree in $(G\setminus G')[U]$ is at least
  $7d/(80t)$ and then finding a copy of $K_{t-1}$ in the neighborhood
  of a vertex of maximum degree in~$(G\setminus G')[U]$.  We omit the
  details.
\end{proof}

We now are ready to prove Theorem~\ref{cor:rand_I}.

\subsection{Proof of Theorem~\ref{cor:rand_I}}
We set $\alpha:=(d/(4 n))^{t-2}/(20t)$ and choose $n_0$ sufficiently
large.  We start with the graph $G$ and at the beginning we set all
edge weights to one, i.e., $w(e):=1$ for all $e\in E(G)$. We shall
iteratively apply Corollary~\ref{cor:ff} to find $\ell=n^\beta$
fractional $K_t$-factors $f_1,\dots, f_\ell$. In doing so we will
iteratively update the weights of the edges in $G$.  By
Fact~\ref{fact:a_rich} and Propositions~\ref{prop:w_triangles}
and~\ref{prop:many_triangles_w}, the assumptions of
Corollary~\ref{cor:ff} will be satisfied after each iteration, and we
shall be able to find a fractional $K_t$-factor in the weighted graph
at hand.  Recall that the `weighted degree' $\deg_w(v)$ of a vertex
$v\in V=V(G)$ is defined to be $\sum_{u\in N(v)} w(uv)$.  We observe
that this degree is exactly~$d$ at the beginning (when $w\equiv1$).
Then we update the edge weights for all $uv\in E(G)$ in the $i$th
iteration as follows:
\begin{equation}\label{eq:update_rule}
  w(uv):=w(uv)-\sum_{T\in \cK_t(G)\colon V(T)\supset\{u,v\}} f_i(T).
\end{equation}
Observe that the weighted degree of \emph{every} vertex decreased by
$t-1$, since
\[
  \sum_{u\in N(v)}\sum_{{T\in \cK_t(G): \,V(T)\supset\{u,v\}}}
  f_i(T)=(t-1)\sum_{T\in \cK_t(G):\,  V(T)\ni v} f_i(T)=t-1, 
\] 
because~$f_i$ is a fractional $K_t$-factor.  Now suppose
$\sum_{u\in N(v)} w(uv) \ge (1-\alpha^2)d$ for any $v\in V$ throughout
the process.  Let $G'$ be the graph on $V$ consisting of edges of $G$
that are not $\alpha$-rich.  By Fact~\ref{fact:a_rich},
$\Delta(G')\le \alpha d$.  Thus, by
Propositions~\ref{prop:w_triangles} and~\ref{prop:many_triangles_w},
we can apply Corollary~\ref{cor:ff} with $D=d/2$ to $(G,w)$
iteratively, updating the weights
\[
  \frac{d\alpha^2}{t-1} = \frac{d}{400t^2(t-1)} \left( \frac d{4
      n}\right )^{2t-4} = \frac  {d^{2t-3}}{400t^2(t-1)(4n)^{2t-4}}
  \ge \frac{n^{(2t-3)\beta}}{400t^2(t-1)4^{2t-4}} \ge n^{\beta}=\ell 
\]
times.  Because of our update rule~\eqref{eq:update_rule},
condition~\eqref{eq:1} does hold for the~$f_i$ ($1\leq i\leq\ell$)
that we have obtained.
\qed

\section{Fractional $K_t$-factors: the sparse case}
\label{sec:sparse}

In this section we prove the following theorem.

\begin{theorem}\label{cor:rand_II}
  For any integer $t\ge 3$ and $\delta\in (0, 1/(2t-3))$, there exists
  $n_0$ such that every $(n,d,\lambda)$-graph $G$ with $n\ge n_0$,
  $d\le n^{1-\delta}$ and
  $\lambda\le (1/(50t\cdot 4^{t-2}))d^{t-1}/n^{t-2}$ contains
  $\ell=n^{\delta/(4t^2)}$ fractional $K_t$-factors $f_1,\dots,f_\ell$
  such that
  \begin{equation}
    \label{eq:dagger}
    \sum_{i=1}^\ell \sum_{T\in\cK_t(G)\colon V(T)\supset \{u,v\}}
    f_i(T)\le 1\text{ for every edge }uv\in E(G).
  \end{equation}
\end{theorem}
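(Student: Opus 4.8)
The plan is to follow the strategy outlined in Section~\ref{sec:outline} for the sparse regime: instead of repeatedly removing fractional $K_t$-factors from a single weighted graph, we randomly partition $E(G)$ into $\ell=n^{\delta/(4t^2)}$ classes $E_1,\dots,E_\ell$, so that each spanning subgraph $G_i:=(V,E_i)$ is distributed exactly as $G_p$ with $p=1/\ell$, and then show that with positive (indeed high) probability \emph{every} $G_i$ satisfies the hypotheses of Corollary~\ref{cor:ff}. Once that is done, each $G_i$ (viewed with the trivial weight $w\equiv 1$, so that $\alpha=0$ and ``$\alpha$-rich'' is vacuous) contains a fractional $K_t$-factor $f_i$; since the $f_i$ are supported on $\cK_t(G_i)\subseteq\cK_t(G)$ and the edge sets $E_i$ are pairwise disjoint, for any fixed $uv\in E(G)$ at most one summand in~\eqref{eq:dagger} is nonzero, and that summand is at most $1$ because $f_i$ respects the weight $w(uv)=1$ in the sense of~\eqref{eq:ftf_weight}. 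Hence~\eqref{eq:dagger} holds automatically, and the whole theorem reduces to the probabilistic claim about the $G_i$.

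To verify the three hypotheses of Corollary~\ref{cor:ff} for $G_i=G_p$, I would first record the parameters: $p=1/\ell=n^{-\delta/(4t^2)}$, so that the ``expected'' degree is $dp$, the expected density is $p\cdot d/n$, and (crucially) $d\ge n^{1-1/(2t-3)}/2$ by Fact~\ref{fact:d_bound}, while $d\le n^{1-\delta}$. One shows that $G_p$ is, with high probability, a good enough ``pseudorandom'' graph in its own right: by Chernoff (Theorem~\ref{thm:chernoff}) every vertex has degree $(1\pm o(1))dp$ w.h.p., and more importantly the Expander-Mixing-type estimates we actually need are counts of copies of $K_i$ in vertex subsets, which are provided by Proposition~\ref{prop:Kt_in_U} for $G$ itself; one then transfers these counts to $G_p$ using Janson's inequality (Theorem~\ref{thm:Janson}) to get a lower bound on the number of copies of $K_t$ in any not-too-small vertex set of $G_p$, together with a first-moment/Chernoff upper bound. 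Concretely: hypothesis~\ref{cor:2} (every $0.11n/t$ vertices span a $K_t$) follows by fixing such a set $U$, noting by Proposition~\ref{prop:Kt_in_U} that $G[U]$ has $\gg(d/n)^{\binom t2}|U|^t$ copies of $K_t$ with bounded pairwise edge-overlap, applying Janson to see $\PP[\text{no }K_t\text{ in }G_p[U]]\le\exp(-\Omega(\mathbb E[X]^2/\Delta))$, and checking that the exponent beats $\binom{n}{0.11n/t}=2^{O(n)}$ — this is where the bound $d\ge n^{1-1/(2t-3)}/2$ and $p=n^{-\delta/(4t^2)}$ with $\delta<1/(2t-3)$ are used to make $\mathbb E[X]^2/\Delta$ superlinear in $n$. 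Hypotheses~\ref{cor:1} (a family $\cT_v$ of $\ge D/(t-1)$ edge-disjoint $K_t$'s through each $v$, for a suitable $D$, say $D=\Theta(dp)$) and~\ref{cor:3} (property $\cP(t,D,0.2n,n)$ for $G_p$) are deduced along the same lines: one takes the deterministic statements for $G$ (Propositions~\ref{prop:w_triangles} and~\ref{prop:many_triangles_w}, whose hypotheses on $G$ hold here since $\lambda\le(1/(50t4^{t-2}))d^{t-1}/n^{t-2}$), and for each of the polynomially many ``configurations'' that must contain a $K_t$ one applies Janson and a union bound, after scaling $D$ down to $D\sim dp$ so that the relevant subsets have size at least $(d/(4n))^{t-i-1}(dp)$ and Proposition~\ref{prop:Kt_in_U} still applies in $G$.

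In more detail on the union bound for~\ref{cor:3}: property $\cP(t,D,0.2n,n)$ for a fixed $U$ with $|U|\ge n-D$ and fixed $U_0\subseteq U$ of size $D/t$ asks for $\ge 0.2n/(t-1)$ pairwise-edge-disjoint $K_t$'s, each meeting $U_0$ in exactly one vertex — this is naturally obtained by a greedy/maximality argument as in the proof of Proposition~\ref{prop:many_triangles_w}, where the only thing one needs at each step is that some vertex of $U_0$ has many $G_p$-neighbours in a leftover set and that its $G_p$-neighbourhood there contains a $K_{t-1}$; both of these are ``span a small clique'' statements that Janson delivers w.h.p., and the number of $(U,U_0)$ pairs and of greedy steps is at most $\binom{n}{D}\binom{n}{D/t}\cdot n=2^{O(D\log n)}$, so one needs the Janson failure probability to be $\exp(-\omega(D\log n))=\exp(-\omega(dp\log n))$, which again follows from $dp\ge n^{1-1/(2t-3)-\delta/(4t^2)}/2$ being a large power of $n$ while the relevant clique counts in $G$ are even larger powers of $n$. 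The main obstacle — and the step requiring the most care — is precisely this bookkeeping: choosing $D$ (and the exponent $\delta/(4t^2)$ in $\ell$) so that, simultaneously, (a) $p=1/\ell$ is large enough that $\mathbb E[X]^2/\Delta$ for each Janson application exceeds the logarithm of the relevant union-bound count by a polynomial factor, (b) $D=\Theta(dp)$ is small enough that $D\le (1/2)(n-D)$-type inequalities and the size conditions ``$|U|\ge(d/(4n))^{t-i-1}m$'' in Proposition~\ref{prop:Kt_in_U} hold with $m\sim dp$, and (c) $\lambda(4n/d)^{t-2}\le m\le d$ holds for this $m$, so that Proposition~\ref{prop:Kt_in_U} is applicable in $G$ in the first place — this last point is where the slightly worse constant $1/(50t4^{t-2})$ (versus $1/(20t4^{t-2})$ in the dense case) gets absorbed, leaving room for the $p$-factor. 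Everything else is a routine combination of Chernoff for degrees, Janson for small-clique appearances, and the linear-programming machinery of Corollary~\ref{cor:ff}.
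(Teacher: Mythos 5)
Your overall strategy coincides exactly with the paper's: randomly distribute each edge of $G$ into one of $\ell$ classes, observe each $G_i\sim G_p$, prove that with high probability $G_p$ satisfies the hypotheses of Corollary~\ref{cor:ff} with $w\equiv1$ and $\alpha=0$, and then note that edge-disjointness of the $G_i$ gives~\eqref{eq:dagger} for free.  However, there are several genuine gaps in the way you propose to carry out the probabilistic verification, and they are precisely at the step you dismiss as ``routine.''

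First, your choice of scale $D=\Theta(dp)$ for the parameter $D$ in Corollary~\ref{cor:ff} is wrong.  After keeping each edge independently with probability $p$, a family of $\Theta(d)$ edge-disjoint copies of $K_t$ through a vertex $v$ in $G$ survives in $G_p$ only at rate $p^{\binom t2}$ per clique (all $\binom t2$ edges must survive), so $|\cT_v|$ in $G_p$ is $\Theta\bigl(p^{\binom t2}d\bigr)$, which is much smaller than $dp$.  The correct choice is $D=p^{\binom t2}d/4$, and Proposition~\ref{prop:clique_stars} together with Chernoff gives hypothesis~\ref{cor:1} at that scale; with $D=\Theta(dp)$, hypothesis~\ref{cor:1} simply fails.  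Relatedly, your concern about applying Proposition~\ref{prop:Kt_in_U} with $m\sim dp$ and ``absorbing'' the constant there is misplaced: the clique counting in Lemma~\ref{prop:clique_Janson} is always done in $G$ with $m=d$, and the sparsification to $G_p$ is handled by Janson or Chernoff afterwards, so there is no need to apply Proposition~\ref{prop:Kt_in_U} at the sparsified scale.

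Second, and most significantly, the assertion that Proposition~\ref{prop:Kt_in_U} produces $\gg(d/n)^{\binom t2}|U|^{t}$ copies of $K_t$ ``with bounded pairwise edge-overlap,'' ready for Janson, is unsupported and in fact false in general.  Proposition~\ref{prop:Kt_in_U} is purely a count; it says nothing about how the copies overlap.  Controlling the variance-proxy $\Delta$ in~\eqref{eq:Delta_def} is the whole difficulty here, and the paper explicitly flags it: some pairs $u,v$ could lie in far too many common $K_t$'s, making $\Delta\gg(\EE X)^2$ and rendering~\eqref{eq:Janson} useless.  The paper's remedy is the notion of \emph{nice} cliques: one discards vertices of $U'$ that are ``bad'' (too many neighbours in $U_0$), vertices of $U_0$ with too few neighbours in $U'$, and vertices that are ``expensive'' relative to the chosen $u\in U_0$, and only then applies Janson to the surviving nice cliques.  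This pruning is what makes $\Delta\le 2\,\EE X\cdot(d^{t-1}/n)(d/n)^{\binom{t-3}{2}}$ and hence $\PP[X=0]\le\exp\bigl(-n^{1+1/(4t-6)}\bigr)$.  Without some such restriction your Janson estimate does not close.

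Third, your union-bound accounting for property $\cP$ is off.  The greedy maximality argument requires, at the point of contradiction, a copy of $K_t$ from $U_0$ to a \emph{residual} set $U'=U\setminus(U_0\cup W)$, where $W$ is the set of vertices already used by the maximal family.  Since $W$ depends on $G_p$, one must union-bound over all candidate residual sets, of which there are $\binom n{\Theta(n)}=2^{\Theta(n)}$, not over $\binom nD\binom n{D/t}\cdot n=2^{O(D\log n)}$ as you write.  Consequently the Janson failure probability must be $\exp(-\omega(n))$, not merely $\exp(-\omega(dp\log n))$.  The paper sidesteps the adaptivity entirely by proving Lemma~\ref{prop:clique_Janson} \emph{uniformly} for all fixed pairs $(U_0,U')$ (a $4^n$ union bound), after which the extraction of $\cT_0$ via maximality is deterministic on that high-probability event.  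This reorganisation is cleaner and is what makes the union bound go through.
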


\subsection{Proof overview}
Our proof strategy this time will be to \textit{partition}~$G$
randomly into $\ell=n^{\delta/(4t^2)}$ edge-disjoint subgraphs~$G_i$,
and then to show that, with high probability, each such random
subgraph~$G_i$ satisfies the assumptions of Corollary~\ref{cor:ff} and
thus contains a fractional $K_t$-factor~$f_i$.  Because the host
graphs~$G_i$ of such~$f_i$ are edge-disjoint,
condition~\eqref{eq:dagger} will hold because, for any $T\in \cK_t(G)$,
there is at most one $i\in [\ell]$ such that $f_i(T)>0$.
Note that, in this section, when we use results from
Sections~\ref{sec:LP} and~\ref{sec:P_ndl}, we always use them on
standard graphs, i.e., with~$w\equiv 1$.  In particular, here, every
edge will be $\alpha$-rich for any~$\alpha\geq0$.  For the sake of
definiteness, we always take~$\alpha=0$ in this section.
 
\subsection{Probabilistic lemmas}
Recall that, for a graph $G=(V,E)$, the random subgraph~$G_p$ of $G$
is a spanning subgraph of~$G$ in which each edge from $E$ is included
with probability~$p$, independently of all other edges.  In this
subsection, we show that, for suitable $(n,d,\lambda)$-graphs~$G$, with
high probability~$G_p$ satisfies the assumptions of
Corollary~\ref{cor:ff} and thus contains a fractional $K_t$-factor
(Theorem~\ref{thm:fract_c_factor_Gp} below).  We first show that, with
high probability, $G_p$~satisfies~\ref{cor:1} in
Corollary~\ref{cor:ff}, with $D=p^{\binom{t}{2}}d/4$.

\begin{proposition}
  \label{prop:clique_stars}
  For any $t\ge 3$ and $\delta\in(0,1/(2t-3))$ there exists $n_0$ such
  that the following holds.  Suppose $G$ is an $(n,d,\lambda)$-graph
  with $n\ge n_0$, $d\le n^{1-\delta}$ and
  $\lambda\le(1/(20t\cdot4^{t-2}))d^{t-1}/n^{t-2}$.  Let $p=d^{-\eta}$
  for some $\eta\in(0,1/t^2)$.  Then, with probability at least
  $1-n\exp(-\sqrt d)$, for any vertex $v\in V(G_p)$ there exists a
  family $\cT_v$ in~$G_p$ with $|\cT_v|\geq p^{\binom{t}{2}}d/(4t-4)$.
\end{proposition}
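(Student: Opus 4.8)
The plan is to transfer a large ``sunflower'' of cliques from the deterministic graph $G$ to the random subgraph $G_p$, via a Chernoff bound followed by a union bound over the vertices. First I would produce the sunflower already inside $G$: applying Proposition~\ref{prop:w_triangles} with $G'$ the empty graph (its hypothesis on~$\lambda$ is precisely the one assumed here), we obtain, for every $v\in V(G)$, a family $\cT_v^{G}$ of at least $d/(2t-2)$ copies of $K_t$ in $G$ with $V(T)\cap V(T')=\{v\}$ for all distinct $T,T'\in\cT_v^{G}$. The structural fact that makes the randomisation work is that any two members of $\cT_v^{G}$ are edge-disjoint, since a common edge would have both endpoints in $V(T)\cap V(T')=\{v\}$, which is impossible. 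Consequently the events $\{T\subseteq G_p\}$, $T\in\cT_v^{G}$, are mutually independent, each of probability $p^{\binom{t}{2}}$, because each depends on a separate block of the independent edge-indicators of $G_p$.

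Fix $v$ and let $X_v$ denote the number of $T\in\cT_v^{G}$ with $T\subseteq G_p$, so that $X_v$ is a sum of independent Bernoulli variables with
\[
  \EE X_v=|\cT_v^{G}|\,p^{\binom{t}{2}}\ \ge\ \frac{d\,p^{\binom{t}{2}}}{2t-2}.
\]
To control this, note that since $p=d^{-\eta}$ with $\eta<1/t^2$ we have $p^{\binom{t}{2}}=d^{-\eta t(t-1)/2}\ge d^{-(t-1)/(2t)}=d^{-1/2+1/(2t)}$, whence $\EE X_v\ge d^{1/2+1/(2t)}/(2t-2)$. By Fact~\ref{fact:d_bound} we have $d\ge n^{1-1/(2t-3)}/2$, which tends to infinity with~$n$; hence, choosing $n_0$ large enough in terms of~$t$ (and at least as large as the threshold in Proposition~\ref{prop:w_triangles}), we may assume $\EE X_v\ge 8\sqrt d$. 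Theorem~\ref{thm:chernoff} with $\delta=1/2$ then gives
\[
  \PP\!\left(X_v<\tfrac12\EE X_v\right)<e^{-\EE X_v/8}\le e^{-\sqrt d},
  \qquad\text{while}\qquad
  \tfrac12\EE X_v\ \ge\ \frac{p^{\binom{t}{2}}d}{4t-4}.
\]

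Finally I would take the union bound over all $v\in V(G)$: with probability at least $1-n\exp(-\sqrt d)$ we have $X_v\ge p^{\binom{t}{2}}d/(4t-4)$ for every $v$ simultaneously. On this event, for each $v$ let $\cT_v$ consist of those members of $\cT_v^{G}$ that survive in $G_p$; this is a family of at least $p^{\binom{t}{2}}d/(4t-4)$ copies of $K_t$ \emph{in $G_p$} which still pairwise meet only in $v$, i.e.\ exactly the family required. I do not anticipate a genuine obstacle here: the only point requiring a little care is making sure $\EE X_v$ is large enough to beat $e^{-\sqrt d}$ after the factor $1/8$ lost in the Chernoff exponent, and this is exactly where the hypothesis $\eta<1/t^2$ (which yields $p^{\binom{t}{2}}\gg d^{-1/2}$), together with $d\to\infty$, is used.
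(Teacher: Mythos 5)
Your proposal is correct and follows essentially the same route as the paper's proof: apply Proposition~\ref{prop:w_triangles} (with $G'$ empty) to obtain $d/(2t-2)$ edge-disjoint copies of $K_t$ through each vertex in $G$, note that edge-disjointness makes the survival events independent so that $X_v\sim\Bin(d/(2t-2),p^{\binom t2})$, and finish with a Chernoff bound (you use the lower-tail exponent $e^{-\EE X_v/8}$, the paper uses the slightly weaker $e^{-\EE X_v/12}$, both sufficient) plus a union bound over vertices.
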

\begin{proof}
  For any $v\in V(G_p)=V(G)$, by Proposition~\ref{prop:w_triangles},
  there is a family $\cT'_v$ in~$G$ with $|\cT'_v|= d/(2t-2)$.
  Let~$X$ be the number of cliques~$K_t$ from~$\cT'_v$ in~$G_p$.
  Since the cliques in~$\cT_v$ are edge-disjoint,
  $X\sim\Bin(d/(2t-2),p^{\binom{t}{2}})$.  By Chernoff's inequality
  (Theorem~\ref{thm:chernoff}),
  $\PP\left[X<\EE X/2\right]<e^{-\EE
    X/12}=\exp\big(-p^{\binom{t}{2}}d/(24t)\big)\le
  \exp(-d^{1-\binom{t}{2}\eta}/(24t))\le \exp(-\sqrt d)$.  The union
  bound over all vertices yields that, with probability at least
  $1-n\exp(-\sqrt d)$, every vertex~$v$ lies in some family~$\cT_v$ in
  $G_p$ of at least $p^{\binom{t}{2}}d/(4t-4)$ copies of~$K_t$.
\end{proof}

We now state and prove a technical lemma that will be required to show
that~$G_p$ is very likely to have property~$\cP(t,D,D',n)$ for certain
values of~$D$ and~$D'$.  The proof of this lemma is based on Janson's
inequality (Theorem~\ref{thm:Janson}).  However, to have `weak enough
dependence' when applying inequality~\eqref{eq:Janson}, we shall have
to employ an additional trick.

\begin{lemma}
  \label{prop:clique_Janson}
  For any $t\ge 3$ and $\delta\in(0,1/(2t-3))$ there exists $n_0$ such
  that the following holds.  Suppose $G$ is an $(n,d,\lambda)$-graph
  with $n\ge n_0$, $d\le n^{1-\delta}$ and
  $\lambda\le (1/(50t\cdot 4^{t-2}))d^{t-1}/n^{t-2}$.  Let
  $p=d^{-\eta}$ for some $\eta\in (0,\delta/(2t^2)]$.  Then, with
  probability $1-2^{-n}$, for every pair of disjoint subsets~$U_0$
  and~$U'$ of~$V(G)$ with $|U_0|=D= p^{\binom{t}{2}}d/(4t)$ and
  $|U'|= {n}/{(10t)}$, there is a clique~$K$ on~$t$ vertices in~$G_p$
  with $|V(K)\cap U_0|=1$ and $|V(K)\cap U'|=t-1$.
\end{lemma}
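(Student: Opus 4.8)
\textit{Setup and plan.} I would fix an arbitrary pair of disjoint sets $U_0,U'\subset V(G)$ with $|U_0|=D=p^{\binom{t}{2}}d/(4t)$ and $|U'|=n/(10t)$, produce a large family $\cK$ of copies of $K_t$ in $G$ each having exactly one vertex in $U_0$ and $t-1$ vertices in $U'$, and then run Janson's inequality (Theorem~\ref{thm:Janson}) so that the probability that \emph{no} member of $\cK$ survives in $G_p$ is small enough that a union bound over all admissible pairs $(U_0,U')$ still leaves failure probability below $2^{-n}$. The observation that makes the union bound affordable is that $|U_0|=D\le d/(4t)=o(n)$ (as $d\le n^{1-\delta}$), so there are only $e^{o(n)}$ choices for $U_0$, while $U'$ contributes at most $\binom{n}{n/(10t)}\le 2^{H(1/(10t))n}$ with $H(1/(10t))<1$; hence it suffices to bound the per-pair failure probability by $\exp(-cn)$ for a suitable constant $c$.

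\textit{The deterministic count.} By the expander mixing lemma (Theorem~\ref{thm:EML}) and the bound on $\lambda$, one gets $e_G(U_0,U')\ge\tfrac12\cdot\tfrac{d}{n}|U_0||U'|$, so at least $D/(40t)$ vertices $v\in U_0$ satisfy $|N_G(v)\cap U'|\ge d/(40t)$. For each such $v$, Proposition~\ref{prop:Kt_in_U} (with $i=t-1$ and $G'$ empty; note $\lambda(4n/d)^{t-2}\le d/(50t)\le d/(40t)$) yields at least a constant times $|N_G(v)\cap U'|^{t-1}(d/n)^{\binom{t-1}{2}}$ copies of $K_{t-1}$ inside $N_G(v)\cap U'$. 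Adjoining $v$ to these gives a family $\cK$ of $N:=|\cK|$ copies of $K_t$ of the required type, with $N\ge c_tD\,d^{t-1}(d/n)^{\binom{t-1}{2}}$; in the stated range one checks $Np^{\binom{t}{2}}=\omega(n)$.

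\textit{Janson with a two-round exposure (the trick).} Applying Janson directly to the single random graph $G_p$, with the $H_i$ ranging over $\cK$, is defeated by dependence: two copies of $K_t$ sharing their $(t-1)$-set $S\subset U'$ but rooted at different vertices of $U_0$ share the $\binom{t-1}{2}$ edges of $S$, and their total contribution to $\Delta$ has order $p^{2\binom{t}{2}-\binom{t-1}{2}}\sum_S\deg_{U_0}(S)^2$, which is too large because $\deg_{U_0}(S)$ can be as large as the worst-case codegree $d^2/n+\lambda^2$. The additional trick is a two-round exposure. First reveal only the edges of $G_p$ inside $U'$; using Theorem~\ref{thm:chernoff} together with Proposition~\ref{prop:Kt_in_U} one shows that, with probability $1-2^{-\omega(n)}$, $G_p[U']$ is \emph{typical}: it still contains $\Theta\!\big(p^{\binom{t-1}{2}}|U'|^{t-1}(d/n)^{\binom{t-1}{2}}\big)$ copies of $K_{t-1}$, and $N^{(1)}:=\sum_{S\in\cK_{t-1}(G_p[U'])}\deg_{U_0}(S)$ is at least $\tfrac12p^{\binom{t-1}{2}}N$. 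Conditioning on a typical $G_p[U']$, I would then apply Janson with the \emph{remaining} randomness, the edges of $G_p$ between $U_0$ and $U'$, and with events indexed by pairs $(v,S)$, $S\in\cK_{t-1}(G_p[U'])$, $v\in N_G(S)\cap U_0$, the event being that all $t-1$ edges from $v$ to $S$ are present. Here $\EE[X\mid G_p[U']]=p^{t-1}N^{(1)}=\omega(n)\,p^{-(t-1)}$, and — the whole point — two such events are dependent only if they share a vertex of $U_0$, so the dependency graph is a disjoint union over $v\in U_0$ and the ruinous ``different $v$, same $S$'' interactions have disappeared. The conditional $\Delta$ is then a sum over $v\in U_0$ of contributions from copies $S,S'$ of $K_{t-1}$ through a common vertex, weighted by local codegrees $|N_{G_p}(r)\cap N_G(v)\cap U'|$; these are controlled using the typicality of $G_p[U']$, the codegree bound $\deg_G(u,w)\le d^2/n+\lambda^2$, and — to deal with the vertex pairs of atypically large codegree — the trace/eigenvalue estimates $\sum_{u\neq w}\deg_G(u,w)^2\le d^4+\lambda^2nd$ and $\sum_u|N_G(u)\cap U_0|^2\le (d^2/n)|U_0|^2+\lambda^2|U_0|$, which show such pairs are rare. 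Carrying this out, the aim is $\EE[X\mid G_p[U']]^2/\Delta=\Omega(n)$ in the entire range of $(d,\lambda)$, so Janson gives $\PP[X=0\mid G_p[U']]\le\exp(-\Omega(n))$; removing the conditioning costs only the $2^{-\omega(n)}$ probability that $G_p[U']$ is atypical. A union bound over the $\le e^{o(n)}2^{H(1/(10t))n}$ choices of $(U_0,U')$ then yields failure probability at most $2^{-n}$.

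\textit{Main obstacle.} The hard part is the last estimate $\EE[X\mid G_p[U']]^2/\Delta=\Omega(n)$ in the extremal regime $\lambda\approx cd^{t-1}/n^{t-2}$ with $d$ near its minimum $n^{(t-2)/(t-3/2)}$ allowed by Fact~\ref{fact:d_bound}: there the worst-case codegree $d^2/n+\lambda^2$ dwarfs the typical codegree $d^2/n$, and one must argue — splitting vertex pairs into dyadic codegree levels and bounding the number of pairs at each level via $\mathrm{tr}(A^4)\le d^4+\lambda^2(nd-d^2)$ for the adjacency matrix $A$ — that copies $S,S'$ meeting in few vertices and rooted at high-codegree pairs are too scarce to spoil the bound. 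It is precisely here that the gain from the two-round exposure (which removes the contribution $\sum_S\deg_{U_0}(S)^2$) is indispensable.
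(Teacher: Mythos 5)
Your two-round exposure is a genuinely different route from the paper's. The paper controls the Janson parameter $\Delta$ entirely by a \emph{deterministic} restriction of the clique family: it works only with ``nice'' cliques, i.e.\ those rooted at a vertex $u\in U_0$ with $\deg_{U'\setminus B}(u)\ge d/(40t)$ (where $B\subset U'$ is the set of ``bad'' vertices with $\deg_{U_0}\ge 2Dd/n$) and whose other $t-1$ vertices are ``inexpensive'' with respect to $u$, meaning $\deg_{N(u)\cap(U'\setminus B)}(v)\le 4d^2/n$. Bad and expensive vertices are few by the expander mixing lemma, so the nice family is still large; and both types of edge-sharing between nice cliques (different root, $\ge2$ shared $U'$-vertices; same root, $\ge1$ shared $U'$-vertex) are then bounded uniformly, giving $\Delta\le 2\,\EE X\,(d^{t-1}/n)(d/n)^{\binom{t-3}{2}}$ and hence $\PP[X=0]\le\exp(-n^{1+1/(4t-6)})$ — strong enough to beat the crude $4^n$ union bound. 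Your exposure trick cleanly eliminates the ``same $S$, different root'' interactions by revealing $G_p[U']$ first, which is an attractive alternative to the paper's ``good vertices'' device.

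However, the proposal has concrete gaps that prevent it from being a complete proof. First, the claimed typicality of $N^{(1)}=\sum_{S\in\cK_{t-1}(G_p[U'])}\deg_{U_0}(S)$ with failure probability $2^{-\omega(n)}$ is attributed to Chernoff, but Chernoff does not apply for $t\ge4$ (the surviving $K_{t-1}$'s are not independent), and even for $t=3$ the weights $\deg_{U_0}(\{u,w\})$ are unbounded a priori, so a weighted-Chernoff bound degrades with the maximum weight; establishing this concentration with $\exp(-\Omega(n))$ failure probability would essentially require re-introducing a bound on $\deg_{U_0}$ over the relevant $S$, which is exactly what the paper's ``bad vertex'' truncation does before any randomness is revealed. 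Second, and as you explicitly flag, the key estimate $\EE[X\mid G_p[U']]^2/\Delta=\Omega(n)$ in the extremal regime (small $d$, $\lambda$ near the allowed maximum) is only sketched; the proposed dyadic decomposition plus $\mathrm{tr}(A^4)\le d^4+\lambda^2(nd-d^2)$ is a plausible strategy but is precisely the part where the paper's ``inexpensive'' truncation (capping the local codegree at $4d^2/n$ at a loss of only $c^2 d$ vertices per root) provides a short, deterministic resolution. Until those two steps are carried out, the argument does not yet verify the statement.
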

\begin{proof}
  Let~$c:=1/(50t\cdot 4^{t-2})$.  We shall use
  Proposition~\ref{prop:Kt_in_U} to show that there are sufficiently
  many cliques~$K_t$ in~$G$ with one vertex from~$U_0$ and $t-1$
  from~$U'$, so that after keeping each edge at random, the
  probability that none of the cliques survives is very small.  Taking
  the union bound over all possible choices of~$U_0$ and~$U'$ will
  then finish the proof.  To estimate the survival probability of some
  clique, we shall apply Janson's inequality,
  Theorem~\ref{thm:Janson}.  However, some special care needs to be
  taken, as otherwise the parameter~$\Delta$ in~\eqref{eq:Delta_def}
  may turn out to be too large.  Therefore, we shall restrict out
  attention to certain `nice' cliques between~$U_0$ and~$U'$.

  For a given set $U_0$ we say that a vertex $u$ from $U'$ is
  \emph{bad} \emph{with respect to $U_0$} if
  $\deg_{U_0}(u)\ge 2 Dd/n$, and otherwise we say it is \emph{good}.
  Let $B$ be the set of bad vertices from $U'$ with respect to
  $U_0$. Then $e(B,U_0)\ge 2|B|Dd/n$, whereas Theorem~\ref{thm:EML}
  asserts that
  \[
    e(B,U_0)\le |B|Dd/n+\lambda \sqrt{|B|D}\le |B|Dd/n+cd^{t-1} \sqrt{|B|D}/n^{t-2}.
  \]
  From this we infer that $|B|Dd/n\le cd^{t-1} \sqrt{|B|D}/n^{t-2}$,
  and therefore
  \begin{equation*}
    |B|\le c^2d^{2t-4}/(Dn^{2t-6})\le (4t)c^2d^{2t-5+\binom{t}{2}\eta}/n^{2t-6}<n/(30t),
  \end{equation*}
  by the choice of $\eta<\delta/\binom{t}{2}$ and $c$. Thus, the
  number of good vertices in~$U'$ is
  $|U'\setminus B|\ge n/(15t)$.

  Next we estimate the number of edges $e_G(U_0,U'\setminus B)$
  between $U_0$ and $U'\setminus B$ in $G$ (we omit the subscript~$G$
  whenever it is clear from the context).  Note that
  $Dn/d^2\ge n^\delta p^{\binom t2}/(4t) \ge n^\delta n^{-\eta t^2}
  /(4t) \ge 1$, because $\eta \le \delta/(2t^2)$.  By
  $|U'\setminus B|\ge n/(15t)$ and Theorem~\ref{thm:EML}, we have
  \[
    e(U_0,U'\setminus B)\ge \frac d
    n\frac{Dn}{15t}-\lambda\sqrt{\frac{Dn}{15t}}\ge
    \frac{dD}{15t}-\frac{Dd}{60t}\sqrt{\frac{d^2}{15t Dn}}\ge
    \frac{Dd}{20t},
  \]
  where we used that
  $ \lambda\le cd^{t-1}/n^{t-2}\le c d^2/n<d^2/(60t n)$ and
  $d^2/(Dn)\le 1$.  Thus, since $\Delta(G)\le d$, we have at least
  $D/(40t)$ vertices in~$U_0$ of degree at least~$d/(40t)$ into
  $U'\setminus B$ in~$G$.

  Given a vertex $u\in U_0$ with $\deg_{U'\setminus B}(u)\ge d/(40t)$,
  we call a vertex $v\in N(u)\cap(U'\setminus B)$ \emph{expensive with
    respect to $u$} if $\deg_{N(u)\cap(U'\setminus B)}(v)\ge 4 d^2/n$,
  and otherwise we call it \emph{inexpensive}.  Let~$R$ be the set of
  expensive vertices with respect to\ $u$. Then
  $e\left(R,N(u)\cap(U'\setminus B)\right)\ge 4d^2|R|/n$, whereas
  Theorem~\ref{thm:EML} asserts that
  \begin{equation*}
    e\left(R,N(u)\cap(U'\setminus B)\right)
    \le\deg_{U'\setminus B}(u)|R|\frac d n
    +c\frac{d^{t-1}}{n^{t-2}}\sqrt{\deg_{U'\setminus B}(u)|R|}
    \le \frac{d^2 |R|}{n} + \frac{c d^2}{n}\sqrt{d|R|},
  \end{equation*}
  where we used that $\deg_{U'\setminus B}(u)\le d$.
  Thus we have $d^2|R|/n\le cd^2 \sqrt{d |R|}/n$ and therefore $|R|\le c^2 d$.

  We now introduce the notion of `nice cliques' in $G$.  Let~$K$ be a
  copy of~$K_t$ in~$G$ with $K\cap U_0=\{u\}$ and
  $|K\cap (U'\setminus B)|=t-1$.  We call~$K$ \emph{nice} if
  $\deg_{U'\setminus B}(u)\ge d/(40t)$, and any other vertex
  $v\in V(K)\setminus\{u\}$ is inexpensive with respect to~$u$.  Since
  there are at least $d/(50t)$ such inexpensive vertices in
  $N(u)\cap (U'\setminus B)$, we can deduce the following lower bound
  on the number of nice cliques in~$G$.  For $u\in U_0$ with
  $\deg_{U'\setminus B}(u)\ge d/(40t)$, let $R_u$ be the set of
  inexpensive vertices with respect to $u$ and notice that
  $|R_u|\ge d/(40t) - c^2 d\ge d/(50t)$.  Let $\cC$ be the family of
  nice cliques in~$G$.  Then we have
  \begin{equation*}
    |\cC|=\sum|\cK_{t-1}(R_u)|
    \overset{\text{Proposition~\ref{prop:Kt_in_U}}}{\ge}
    \frac{D}{40t}\frac{(d/(50t))^{t-1}}{2^{(t-1)^2}(t-1)!}\left(d/n\right)^{\binom{t-1}{2}}
    \ge\frac{D d^{t-1}}{2^{(t-1)^2}(50t)^{t} (t-1)!}\left(d/n\right)^{\binom{t-1}{2}},
  \end{equation*}
  where the sum is over all $u\in U_0$ such that $\deg_{U'\setminus B}(u)\ge d/(40t)$.

  We aim to employ next Janson's inequality to show that at least one
  of these nice cliques survives in $G_p$ with `sufficiently high'
  probability.  Let $X$ be the number of nice cliques that are
  contained in~$G_p$.  By the above bound on $|\cC|$,
  \begin{equation}\label{eq:E_nice_C}
    \EE X=p^{\binom{t}{2}}|\cC|
    \ge\frac{p^{\binom{t}{2}}D
      d^{t-1}}{2^{(t-1)^2}(50t)^{t}(t-1)!}\left(d/n\right)^{\binom{t-1}{2}}
    \ge\frac{p^{t(t-1)}d^{t}}{2^{t^2}(50t)^{t}
      t!}\left(d/n\right)^{\binom{t-1}{2}}. 
  \end{equation}
  It remains to estimate the parameter $\Delta$ in Janson's
  inequality.
  
  Let a clique $K$ from $\cC$ be given.  First we estimate the number
  of nice cliques $K'$ with $K\cap K'\cap U_0=\emptyset$ and
  $|K\cap K'\cap U'|\ge 2$.  Note that $|K'\cap U_0|=1$ and denote
  this only vertex in $K'\cap U_0$ by~$v$.  We need to choose at least
  two vertices to lie in the intersection $K\cap K'\cap U'$, and these
  have to be connected to~$v$.  Since every vertex from $K\cap U'$ is
  good, we have at most $2 Dd/n$ choices for~$v$.  Moreover, we need
  to specify further $t-3$ vertices from $U'$ to belong to the copy of
  $K'$ and these have to form a $(t-3)$-clique in $N(v)$.  For
  $t\ge 5$, since $|N(v)|=d\ge (d/4n)^{2}d$, by
  Proposition~\ref{prop:Kt_in_U} with $m=d$ and~$i=t-3$, there are at
  most
  \begin{equation}\label{eq:B}
    \binom{t-1}{2}\cdot \frac{2 Dd}n\cdot 2^{(t-3)^2}
    d^{t-3}(d/n)^{\binom{t-3}{2}}\le (d^{t-1}/n)
    (d/n)^{\binom{t-3}{2}} 
  \end{equation}
  potential nice cliques $K'$.  Note that the estimates
  in~\eqref{eq:B} also hold for $t=3$ and~$4$.

  Next we estimate the number of nice cliques $K'$ with
  $K\cap K'\cap U_0\neq\emptyset$ and $|K\cap K'\cap U'|\ge 1$ for our
  fixed clique~$K$ from~$\cC$.  We have $t-1$ choices for a common
  vertex $x$ from $K\cap K'\cap U'$.  Since~$x$ is inexpensive, its
  common neighborhood with the vertex $y$ from $K\cap K'\cap U_0$ is
  at most~$4d^2/n$.  It remains to estimate the number of possible
  extensions of $x$ and $y$ to a clique of size~$t$.  For this we
  would like to count the number of copies of $K_{t-2}$ on a set of
  size $4d^2/n$. For $t\ge 4$, since $4d^2/n\ge d^2/n$, by
  Proposition~\ref{prop:Kt_in_U} with $m=d$ and $i=t-2$, there are at most
  \begin{equation}\label{eq:A}
    (t-1)\cdot 2^{(t-2)^2} (4 d^2/n)^{t-2}(d/n)^{\binom{t-2}{2}}\le
    (d^{t-1}/n) (d/n)^{\binom{t-3}{2}} 
  \end{equation}
  potential nice cliques $K'$.  Note that the estimates
  in~\eqref{eq:A} also hold for~$t=3$.

  By~\eqref{eq:B} and~\eqref{eq:A}, we get
  $\Delta \le 2 p^{\binom{t}{2}} |\cC|(d^{t-1}/n)
  (d/n)^{\binom{t-3}{2}} = 2 \EE X (d^{t-1}/n)
  (d/n)^{\binom{t-3}{2}}$.  Then Janson's inequality yields
  \begin{align*}
    \PP[X=0]&\overset{\hphantom{\eqref{eq:E_nice_C}}}{\le}\exp\left(-\frac{
              \mathbb{E}[X]^2}{2\Delta} \right)
              \le 
              \exp\left(-\frac{\EE[X]}{4(d^{t-1}/n) (d/n)^{\binom{t-3}{2}}}\right)\\
            &\overset{\eqref{eq:E_nice_C}}{\le}
              \exp\left(-\frac{p^{t(t-1)}d^{t}\left(d/n\right)^{\binom{t-1}{2}}}{
              2^{t^2+2} (50t)^{t} t!\cdot (d^{t-1}/n)\cdot
              (d/n)^{\binom{t-3}{2}}}\right)=
              \exp\left(-\frac{p^{t(t-1)}}{2^{t^2+2} (50t)^tt!} n^2(d/n)^{2t-4}\right).
  \end{align*}
  Note that $p=d^{-\eta}\ge n^{-\eta}\ge n^{-\delta/(2t^2)}$.
  Together with $d\ge n^{1-1/(2t-3)}/2$ (Fact~\ref{fact:d_bound}) and
  $\delta < 1/(2t-3)$, we get
  \[
    \PP[X=0]\le \exp\left(-\frac{p^{t(t-1)}}{2^{t^2+2t-2}(50t)^{t} t!}
      n^{2-\frac{2t-4}{2t-3}}\right)\le
    \exp\left(-n^{1+1/(4t-6)}\right).
  \]
  Taking the union bound over all choices for $U_0$ and $U'$ (of which
  there are at most~$4^n$), we obtain the desired claim with
  probability $1-4^n\exp(-n^{1+1/(4t-6)})\ge 1-2^{-n}$ for~$n$
  large enough.
\end{proof}

Now we show that it is very likely that~$G_p$ satisfies~\ref{cor:2}
and~\ref{cor:3} in Corollary~\ref{cor:ff} with $D=p^{\binom{t}{2}}d/4$
and $\alpha =0$.

\begin{proposition}
  \label{prop:many_cliques}
  For any $t\ge 3$ and $\delta\in(0,1/(2t-3))$ there exists $n_0$ such
  that the following holds.  Suppose $G$ is an $(n,d,\lambda)$-graph
  with $n\ge n_0$, $d\le n^{1-\delta}$ and
  $\lambda\le (1/(50t\cdot 4^{t-2}))d^{t-1}/n^{t-2}$.  Let
  $p=d^{-\eta}$ for some $\eta\in (0,\delta/(2t^2)]$.  Then, with
  probability $1-2^{-n}$, every $0.11n/t$ vertices of~$G_p$ span a
  copy of $K_t$ and $G_p$ has property
  $\cP(t, p^{\binom{t}{2}}d/4, 0.2 n, n)$.
\end{proposition}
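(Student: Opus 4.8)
The plan is to deduce both assertions directly from Lemma~\ref{prop:clique_Janson}, which already packages the Janson-inequality estimate and the `nice clique' trick; what remains is essentially bookkeeping of set sizes. I would let $\mathcal E$ denote the event that for \emph{every} pair of disjoint sets $U_0, U' \subseteq V(G)$ with $|U_0| = p^{\binom{t}{2}}d/(4t)$ and $|U'| = n/(10t)$ there is a copy of $K_t$ in $G_p$ meeting $U_0$ in exactly one vertex and $U'$ in exactly $t-1$ vertices. By Lemma~\ref{prop:clique_Janson} we have $\PP[\mathcal E] \ge 1 - 2^{-n}$, so it suffices to show that both conclusions hold on $\mathcal E$; this gives the claimed probability bound verbatim. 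The numerical input I would use throughout is that $d \le n^{1-\delta}$, whence $p^{\binom{t}{2}}d \le d = o(n)$, so a set $U_0$ of size $p^{\binom{t}{2}}d/(4t)$ has size $o(n/t)$ once $n \ge n_0$; note also that $p^{\binom{t}{2}}d/(4t) = D/t$ for $D = p^{\binom{t}{2}}d/4$, which is exactly the size of the set $U_0$ appearing in the property $\cP(t, D, 0.2n, n)$ to be verified, so it matches the hypothesis of Lemma~\ref{prop:clique_Janson}.

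For the statement that every $0.11n/t$ vertices of $G_p$ span a copy of $K_t$, I would fix a set $U \subseteq V(G_p)$ with $|U| = 0.11n/t$ (larger sets are easier, since any superset of a $K_t$-spanning set spans a $K_t$), choose an arbitrary subset $U_0 \subseteq U$ with $|U_0| = p^{\binom{t}{2}}d/(4t)$, and then an arbitrary subset $U' \subseteq U \setminus U_0$ with $|U'| = n/(10t)$. This is possible for $n \ge n_0$ because $|U \setminus U_0| = 0.11n/t - p^{\binom{t}{2}}d/(4t) \ge 0.1n/t = n/(10t)$. On $\mathcal E$ there is then a copy of $K_t$ in $G_p$ contained in $U_0 \cup U' \subseteq U$, i.e.\ a copy of $K_t$ in $G_p[U]$, as required.

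For property $\cP(t, p^{\binom{t}{2}}d/4, 0.2n, n)$, I would fix $U \subseteq V(G)$ with $|U| \ge n - p^{\binom{t}{2}}d/4$ and $U_0 \subseteq U$ with $|U_0| = p^{\binom{t}{2}}d/(4t)$, take a family $\cT_0$ of copies of $K_t$ in $G_p[U]$ of maximum cardinality satisfying \eqref{pr:one}--\eqref{pr:three}, and argue by contradiction that $|\cT_0| \ge 0.2n/(t-1)$. If $|\cT_0| < 0.2n/(t-1)$, set $W := \big(\bigcup_{T \in \cT_0} V(T)\big) \setminus U_0$; then $|W| = (t-1)|\cT_0| < 0.2n$, since \eqref{pr:three} forces the sets $V(T) \setminus U_0$ ($T \in \cT_0$) to be pairwise disjoint of size $t-1$ each. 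Maximality then forbids any copy of $K_t$ in $G_p$ with one vertex in $U_0$ and its other $t-1$ vertices in $U \setminus (U_0 \cup W)$, because such a clique could be appended to $\cT_0$ (it would meet each member of $\cT_0$ only inside $U_0$). However,
\[
  |U \setminus (U_0 \cup W)| \ge (n - p^{\binom{t}{2}}d/4) - p^{\binom{t}{2}}d/(4t) - 0.2n \ge 0.7n \ge n/(10t)
\]
for $n \ge n_0$, so there is a subset $U' \subseteq U \setminus (U_0 \cup W)$ with $|U'| = n/(10t)$, and event $\mathcal E$ produces exactly such a forbidden clique --- a contradiction. Hence $|\cT_0| \ge 0.2n/(t-1)$, which is property $\cP$.

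I do not anticipate a genuine obstacle: all the probabilistic difficulty lives in Lemma~\ref{prop:clique_Janson}, and here one only needs to check that the ambient sets $U \setminus U_0$ and $U \setminus (U_0 \cup W)$ can host a set of size $n/(10t)$ --- which works precisely because $d = o(n)$ in the sparse regime. The sole point of care is aligning the parameter $D$ in property $\cP$ with the parameter $D$ of Lemma~\ref{prop:clique_Janson}, which is why the proposition takes $D = p^{\binom{t}{2}}d/4$.
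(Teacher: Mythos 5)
Your proposal is correct and follows essentially the same argument as the paper: both parts are deduced on the high-probability event supplied by Lemma~\ref{prop:clique_Janson}, the property~$\cP$ half via the identical maximal-family-plus-contradiction argument, and the ``every $0.11n/t$ vertices span a $K_t$'' half by carving out suitable $U_0$ and $U'$ inside the given set. The parameter bookkeeping ($D/t = p^{\binom{t}{2}}d/(4t)$ aligning with the lemma's $|U_0|$, and $p^{\binom{t}{2}}d = o(n)$ from $d\le n^{1-\delta}$) is handled correctly and matches the paper's calculations.
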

\begin{proof}
  Let~$t$ and~$\delta$ as in the statement be fixed.
  Let~$n_0=n_0(t,\delta)$ be given by Lemma~\ref{prop:clique_Janson}.
  Then for any $\eta\in(0,\delta/(2t^2)]$ the conclusion in
  Lemma~\ref{prop:clique_Janson} holds with probability at least
  $1-2^{-n}$.  Write $D=p^{\binom{t}{2}}d/4$.

  Let $\cT_0$ be a family of $t$-cliques in~$G_p$ of maximum
  cardinality satisfying properties~\eqref{pr:one}--\eqref{pr:three}
  in the definition of $\cP(t, p^{\binom{t}{2}}d/4, 0.2 n, n)$ (see
  Section~\ref{sec:P_ndl}).  If $|\cT_0|<{0.2n}/{(t-1)}$, then let
  $W:=\left(\bigcup_{K\in \cT_0}V(K)\right)\setminus U_0$. It follows
  that $|W|=(t-1)|\cT_0|<0.2n$.  Because of the maximality of $\cT_0$
  there are no cliques in $G_p$ with one vertex from $U_0$ and the
  other $t-1$ vertices from $U':=U\setminus(U_0\cup W)$.  Observe that
  \[
    |U'|\ge n-D-|U_0|-|W|\ge 0.5n.
  \]
  But then, by the assertion of Lemma~\ref{prop:clique_Janson}, there
  must be yet another copy of $K_t$ in $G_p$ between~$U_0$ and $U'$,
  contradicting the maximality of $\cT_0$.  Hence
  $|\cT_0|\geq{0.2n}/{(t-1)}$. 

  It remains to prove that every set $U$ of $0.11 n/t$ vertices of
  $G_p$ spans a clique $K_t$. But this is immediate since $D<0.01n/t$
  and thus any two disjoint subsets of $U$ of cardinality $D$ and
  $0.1n/t$ span a copy of $K_t$.
\end{proof}


Next we show that $G_p$ is very likely to contain a fractional
$K_t$-factor.

\begin{theorem}
  \label{thm:fract_c_factor_Gp}
  For any $t\ge 3$ and $\delta\in(0,1/(2t-3))$ there exists $n_0$ such
  that the following holds.  Suppose $G$ is an $(n,d,\lambda)$-graph
  with $n\ge n_0$, $d\le n^{1-\delta}$ and
  $\lambda\le (1/(50t\cdot 4^{t-2}))d^{t-1}/n^{t-2}$.  Let
  $p=d^{-\eta}$ for some $\eta\in (0,\delta/(2t^2)]$.  Then~$G_p$
  contains a fractional $K_t$-factor with
  probability at least $1-n\exp(-\sqrt d)-2^{-n}$.
\end{theorem}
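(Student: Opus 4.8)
The plan is to apply Corollary~\ref{cor:ff} directly to $G_p$ with the choices $\alpha=0$, $w\equiv1$, $H=G_p$ (so that every edge of $G_p$ is $\alpha$-rich), and $D=p^{\binom{t}{2}}d/4$, and then take a union bound over the failure events in the two probabilistic lemmas already proved. First I would check that these choices meet the numeric hypotheses of Corollary~\ref{cor:ff}. Trivially $\alpha=0<1/(7t^2)$. For the size constraint $3\le D\le n/2$: by Fact~\ref{fact:d_bound} we have $d\ge n^{1-1/(2t-3)}/2$, while $p=d^{-\eta}$ with $\eta\binom{t}{2}$ bounded away from $1$ (since $\eta\le\delta/(2t^2)$ and $\delta<1/(2t-3)$), so $D=d^{1-\binom{t}{2}\eta}/4$ tends to infinity and thus $D\ge3$ once $n_0$ is taken large enough; and $D\le d/4\le n/2$.

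Next I would invoke the probabilistic lemmas. Since $\lambda\le(1/(50t\cdot4^{t-2}))d^{t-1}/n^{t-2}\le(1/(20t\cdot4^{t-2}))d^{t-1}/n^{t-2}$, since $d\le n^{1-\delta}$, and since $\eta\le\delta/(2t^2)<1/t^2$, Proposition~\ref{prop:clique_stars} applies and yields that, with probability at least $1-n\exp(-\sqrt d)$, every vertex $v$ of $G_p$ lies in a family $\cT_v$ with $|\cT_v|\ge p^{\binom{t}{2}}d/(4t-4)=D/(t-1)$; this is precisely assumption~\ref{cor:1} of Corollary~\ref{cor:ff}. Likewise, Proposition~\ref{prop:many_cliques}, whose hypotheses match ours verbatim, gives that with probability at least $1-2^{-n}$ every $0.11n/t$ vertices of $G_p$ span a copy of $K_t$ and $G_p$ has property $\cP(t,D,0.2n,n)$; with $\alpha=0$ and $H=G_p$ these are exactly assumptions~\ref{cor:2} and~\ref{cor:3} of Corollary~\ref{cor:ff}.

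Finally I would take the union bound over the two failure events: with probability at least $1-n\exp(-\sqrt d)-2^{-n}$, the weighted graph $(G_p,w)$ with $w\equiv1$ satisfies all three hypotheses of Corollary~\ref{cor:ff} for $\alpha=0$ and $D=p^{\binom{t}{2}}d/4$, and hence $G_p$ contains a fractional $K_t$-factor. I do not expect a substantive obstacle here: the genuine difficulties — the clique counts via Proposition~\ref{prop:Kt_in_U} and the variance control for Janson's inequality carried out in Lemma~\ref{prop:clique_Janson} — have already been handled in this subsection, so the only point requiring care is bookkeeping: making sure that the single value $D=p^{\binom{t}{2}}d/4$ is simultaneously the one furnished by Propositions~\ref{prop:clique_stars} and~\ref{prop:many_cliques} and lies in the admissible range $[3,n/2]$ demanded by Corollary~\ref{cor:ff}.
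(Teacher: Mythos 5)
Your proposal is correct and follows the paper's proof essentially verbatim: apply Propositions~\ref{prop:clique_stars} and~\ref{prop:many_cliques} to verify the three hypotheses of Corollary~\ref{cor:ff} with $\alpha=0$ and $D=p^{\binom{t}{2}}d/4$, then union-bound the failure probabilities. The only difference is that you spell out the routine checks $3\le D\le n/2$, which the paper leaves implicit.
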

\begin{proof}
  Let~$t$ and~$\delta$ as in the statement be fixed.
  Let~$n_0=n_0(t,\delta)$ be given by
  Propositions~\ref{prop:clique_stars} and~\ref{prop:many_cliques} for
  the given parameters~$t$ and~$\delta$.  Then, for any fixed
  $\eta\in(0,\delta/(2t^2)]$, the conclusions in
  Propositions~\ref{prop:clique_stars} and~\ref{prop:many_cliques}
  hold with probability at least $1-n\exp(-\sqrt d)-2^{-n}$.  Let
  $D:=p^{\binom{t}{2}}d/4$.  The conclusion in our theorem follows
  from Corollary~\ref{cor:ff} (with $\alpha=0$).
\end{proof}

We are now ready to prove Theorem~\ref{cor:rand_II}.

\subsection{Proof of Theorem~\ref{cor:rand_II}}
Let $p=d^{-\eta}$ where~$\eta=\delta/(2t^2)$.
Let~$\ell:=d^\eta=p^{-1}$.  By Fact~\ref{fact:d_bound}, we have
$d\ge n^{1-1/(2t-3)}/2\ge n^{1/2}$.  Thus
$\ell=d^{\eta}\ge n^{\delta/(4t^2)}$.  Consider the random
variable~$I$ that takes values uniformly at random
in~$[\ell]=\{1,\dots,\ell\}$.  For each~$e\in E(G)$, let~$I_e\sim I$
be an independent copy of~$I$.  We randomly partition the edge set
of~$G$ into spanning subgraphs $G_1,\dots,G_\ell$ of~$G$, where
each~$e\in E(G)$ is put into~$G_{I_e}$.  Observe that each~$G_i$ is
distributed as~$G_p$.  By Theorem~\ref{thm:fract_c_factor_Gp}, the
probability that a given~$G_i$ should \textit{not} contain a
fractional $K_t$-factor is at most $n\exp(-\sqrt d)+2^{-n}$.  Since
$d\ge n^{1-1/(2t-3)}/2$, it follows that, with probability
$1-\ell(n\exp (-\sqrt d)+2^{-n})=1-o(1)>0$, there is a partition
of~$G$ into deterministic edge-disjoint spanning subgraphs
$G_1,\dots,G_\ell$ such that each~$G_i$ ($i\in[\ell]$) possesses a
fractional $K_t$-factor~$f_i$.  Since $G_1,\dots,G_\ell$ are
edge-disjoint, condition~\eqref{eq:dagger} holds because for any
$T\in \cK_t(G)$, there is at most one $i\in [\ell]$ such that
$f_i(T)>0$.\qed

\section{Proof of Theorem~\ref{thm:main}}
\label{sec:main}

Let~$t\geq3$ be fixed and let~$G$ be an $(n,d,\lambda)$-graph as in
the statement of Theorem~\ref{thm:main}.  Let
$\delta=4t^2/((4t^2+1)(2t-3))$ and $\beta=1/((4t^2+1)(2t-3))$, so that
$n^\beta=n^{\delta/(4t^2)}$.  Let~$\ell=n^\beta$.  By
Theorems~\ref{cor:rand_I} and~\ref{cor:rand_II} (depending on whether
$d\ge n^{1-1/(2t-3)+\beta}$ or $d\le n^{1-\delta}$), our graph~$G$
contains fractional $K_t$-factors $f_1,\dots, f_\ell$ such that, for
every edge $uv\in E(G)$, we have
$\sum_{i=1}^\ell \sum_{T\in\cK_t(G)\colon V(T)\supset \{u,v\}}
f_i(T)\le 1$.  Clearly, $f(T):=\sum_{i}^\ell f_i(T)\le 1$ for any
$T\in\cK_t(G)$.  Let $H:=\cK_t(G)$ be the $t$-uniform hypergraph with
copies of~$K_t$ in~$G$ as hyperedges and~$V(H)=V(G)$.  Thus, for every
$v\in V=V(H)$ and $i\in [\ell]$, we have
$\sum_{T\in E(H)\colon v\in T} f_i(T)=1$ and thus
$\sum_{T\in E(H)\colon v\in T} f(T)=\ell$.  Moreover, for any distinct
vertices $u$ and~$v$, we have
\[
  \sum_{T\in E(H)\colon \{u,v\}\subseteq T} f(T) = \sum_{i\in [\ell]}
  \sum_{T\in E(H)\colon \{u,v\}\subseteq T} f_i(T)\le 1.
\]
Let $H_f$ be the random spanning subhypergraph of $H$ such that each
edge $T$ of $H$ is included in~$H_f$ independently with
probability~$f(T)$.

Next we let~$k:=8\beta^{-1/2}$.  For $v\in V$, let~$X_v$ be the degree
of~$v$ in $H_f$.  We have
$\EE X_v=\sum_{T\in E(H)\colon v\in T}f(T)=\ell$.  Then Chernoff's
inequality (Theorem~\ref{thm:chernoff}), implies that
\[
  \PP[|X_v-\ell|>(k/2)\sqrt{\ell\ln\ell}]
  \le2e^{-k^2(\ln\ell)/12}<2e^{-5\ln n}=2n^{-5}.
\] 
On the other hand, for any distinct vertices~$u$ and~$v\in V$, let
$Y_{uv}$ be the collective degree~$\deg_{H_f}(u,v)$ of~$u$ and~$v$
in~$H_f$.  Then
$\EE Y_{uv}=\sum_{T\in E(H)\colon \{u,v\}\subseteq T} f(T)\le 1$ (in
fact, $Y_{uv}=0$ if $uv\notin E(G)$).  By Chernoff's inequality again,
we have $\PP[Y_{uv}\ge 1+3\ln n]\le e^{-3\ln n}=n^{-3}$.  The union
bound over all $u\in V$ and all pairs $\{u,v\}\in \binom{V}{2}$ yields
that, with probability at least $1-1/n$, we have
\[
  \deg_{H_f}(u)=\ell\pm (k/2)\sqrt{\ell\ln\ell} \quad\text{ and }\quad
  \deg_{H_f}(u,v)\le 1+3\ln n
\]
for all~$u$ and~$v\in V$.

It is easy to check now that the assumptions of
Theorem~\ref{thm:Pippenger} are satisfied with $\delta'=1/t$,
$\gamma=0.9$,
$D=\big(1+(k/2)\sqrt{(\ln\ell)/\ell}\big)\ell\le2n^\beta$ and
$C=1+3\ln n$.  Thus an application of that theorem gives us a matching
in $H_f$ covering all but at most
\[
  O\big(n(C/D)^{(1-\delta')/(t-1)}\big)
  =O\left(n \left(\frac{1+3\ln n}{2 n^\beta} \right)^{1/t} \right)
  \le n^{1-1/(8t^4)}
\]
vertices.  These edges correspond to cliques~$K_t$ in the original
$(n,d,\lambda)$-graph~$G$ and thus they correspond to a collection of
vertex-disjoint $t$-cliques covering all but at most $n^{1-1/(8t^4)}$
vertices of the graph.  This completes the proof of
Theorem~\ref{thm:main}. 
\qed

\section{Concluding remarks}\label{sec:remarks}

In this paper we have studied near-perfect $K_t$-factors in sparse
$(n,d,\lambda)$-graphs.  We have presented two different approaches
for finding many `weight-disjoint' fractional $K_t$-factors: one for
`large'~$d$ (Theorem~\ref{cor:rand_I}) and one for `small'~$d$
(Theorem~\ref{cor:rand_II}).

We believe that the first approach is more powerful, since it can be
extended to the whole range of $d$ for all $t\ge 4$ as follows.  In
the proof of Theorem~\ref{cor:rand_I} we used
Proposition~\ref{prop:Kt_in_U} to embed a copy of $K_{t-1}$ in a set
of $\Omega(d)$ vertices, even after removing a small number of edges
from each vertex, say $o((d/n)^{t-2}d)$ many. We can allow to remove
from each vertex even $o(d^2/n)$ many edges if we just ask for the
existence of a single copy of $K_{t-1}$, which would suffice for our
first approach. To establish this, we could have used a Tur\'an-type
result in $(n,d,\lambda)$-graphs due to Sudakov, Szab\'o and
Vu~\cite[Theorem 3.1]{SSV}. However, a drawback is that the
pseudorandomness condition in~\cite[Theorem 3.1]{SSV} is not
numerically explicit (it was stated as $\lambda \ll
d^{t-1}/n^{t-2}$). Thus we chose to present a self-contained and
numerically explicit proof.  Moreover, the first approach seems not to
provide Theorem~\ref{thm:main} for triangle-factors in the whole
range, while triangle-factors can be considered as the most important
testbed at the moment.

As for the second approach, we observe that it can be pushed to work
in a range of~$d$ from $\Omega(n^{1-1/(2t-3)})$ to~$n/\polylog n$,
although near the upper bound we would only be able to assert that one
can cover all but $n/\polylog n$ vertices with vertex-disjoint copies
of~$K_t$.

\subsection*{Recent developments}
When finalizing this paper, we learned that very recently
Nenadov~\cite{Nenadov18} proved the existence of $K_t$-factors in
$(n,d,\lambda)$-graphs with $\lambda\leq\eps d^{t-1}/(n^{t-2} \log n)$
for a suitably small $\eps=\eps_t>0$, which is at most a $\log n$
factor away from Conjecture~\ref{conj:KSS}.  Independently, Morris and
the authors of this paper~\cite{HKMP18} proved a similar result with
$\lambda\leq\eps_td^{t}/n^{t-1}$, which is a stronger requirement,
except when $d\geq c_tn/\log n$ for a suitable constant~$c_t>0$.

An $n$-vertex $(\lambda,p)$-bijumbled graph is a graph that satisfies
inequality~\eqref{eq:EML} of the expander mixing lemma
(Theorem~\ref{thm:EML}). This notion is slightly more general: any
$(n,d,\lambda)$-graph is $(\lambda,d/n)$-bijumbled but in a bijumbled
graph not every vertex has degree $d$.  The result of~\cite{Nenadov18}
applies to $(\eps_tp^2/(n\log n),p)$-bijumbled graphs with minimum degree
$\Omega(pn)$.  The result of the present paper as well as the one
in~\cite{HKMP18} can be easily shown to hold in appropriately
bijumbled graphs as well.


\begin{bibdiv}
\begin{biblist}

\bib{ABHKP14b}{article}{
      author={Allen, P.},
      author={B\"ottcher, J.},
      author={H{\`a}n, H.},
      author={Kohayakawa, Y.},
      author={Person, Y.},
       title={Blow-up lemmas for sparse graphs},
        date={2016},
      eprint={1612.00622},
}

\bib{ABHKP17}{article}{
      author={Allen, Peter},
      author={B\"ottcher, Julia},
      author={H\`an, Hi\d{\^e}p},
      author={Kohayakawa, Yoshiharu},
      author={Person, Yury},
       title={Powers of {H}amilton cycles in pseudorandom graphs},
        date={2017},
        ISSN={0209-9683},
     journal={Combinatorica},
      volume={37},
      number={4},
       pages={573\ndash 616},
         url={https://doi.org/10.1007/s00493-015-3228-2},
}

\bib{AC88}{article}{
      author={Alon, N.},
      author={Chung, F. R.~K.},
       title={Explicit construction of linear sized tolerant networks},
        date={1988},
        ISSN={0012-365X},
     journal={Discrete Math.},
      volume={72},
      number={1-3},
       pages={15\ndash 19},
         url={https://doi.org/10.1016/0012-365X(88)90189-6},
}

\bib{Alon94}{article}{
      author={Alon, Noga},
       title={Explicit {R}amsey graphs and orthonormal labelings},
        date={1994},
        ISSN={1077-8926},
     journal={Electron. J. Combin.},
      volume={1},
       pages={Research Paper 12, 8 pp\ (electronic)},
         url={http://www.combinatorics.org/Volume_1/Abstracts/ v1i1r12.html},
}

\bib{AFHRRS12}{article}{
      author={Alon, Noga},
      author={Frankl, Peter},
      author={Huang, Hao},
      author={R\"odl, Vojtech},
      author={Ruci\'nski, Andrzej},
      author={Sudakov, Benny},
       title={Large matchings in uniform hypergraphs and the conjectures of
  {E}rd{\H{o}}s and {S}amuels},
        date={2012},
        ISSN={0097-3165},
     journal={J. Combin. Theory Ser. A},
      volume={119},
      number={6},
       pages={1200\ndash 1215},
}

\bib{AK98}{article}{
      author={Alon, Noga},
      author={Kahale, Nabil},
       title={Approximating the independence number via the
  $\vartheta$-function},
        date={1998},
     journal={Mathematical Programming},
      volume={80},
      number={3},
       pages={253\ndash 264},
}

\bib{AS16}{book}{
      author={Alon, Noga},
      author={Spencer, Joel~H.},
       title={The probabilistic method},
     edition={4th edition},
   publisher={Hoboken, NJ: John Wiley \& Sons},
        date={2016},
        ISBN={978-1-119-06195-3/hbk; 978-1-119-06207-3/ebook},
}

\bib{B98}{book}{
      author={Bollob\'as, B.},
       title={Modern graph theory},
   publisher={Graduate Texts in Mathematics. 184. New York, NY: Springer},
        date={1998},
}

\bib{CFZ12}{article}{
      author={Conlon, David},
      author={Fox, Jacob},
      author={Zhao, Yufei},
       title={Extremal results in sparse pseudorandom graphs},
        date={2014},
        ISSN={0001-8708},
     journal={Adv. Math.},
      volume={256},
       pages={206\ndash 290},
         url={https://doi.org/10.1016/j.aim.2013.12.004},
}

\bib{fueredi88:_match}{article}{
      author={F\"uredi, Zolt\'an},
       title={Matchings and covers in hypergraphs},
        date={1988},
        ISSN={0911-0119},
     journal={Graphs Combin.},
      volume={4},
      number={2},
       pages={115\ndash 206},
}

\bib{HKP18a}{unpublished}{
      author={Han, J.},
      author={Kohayakawa, Y.},
      author={Person, Y.},
       title={Near-optimal clique-factors in sparse pseudorandom graphs},
        note={Proceedings of Discrete Mathematics Days 2018, Sevilla, to
  appear},
}

\bib{HKMP18}{unpublished}{
      author={Han, Jie},
      author={Kohayakawa, Yoshiharu},
      author={Morris, Patrick},
      author={Person, Yury},
       title={Clique-factors in sparse pseudorandom graphs},
        date={2018},
        note={In preparation},
}

\bib{JLR00}{book}{
      author={Janson, S.},
      author={{\L}uczak, T.},
      author={Ruci{\'n}ski, A.},
       title={Random graphs},
   publisher={Wiley-Interscience, New York},
        date={2000},
}

\bib{KSS_bl}{article}{
      author={Koml{\'o}s, J.},
      author={S{\'a}rk{\"o}zy, G.~N.},
      author={Szemer{\'e}di, E.},
       title={Blow-up lemma},
        date={1997},
     journal={Combinatorica},
      volume={17},
      number={1},
       pages={109\ndash 123},
}

\bib{KoRo}{article}{
      author={Kostochka, A.~V.},
      author={R\"odl, V.},
       title={Partial {S}teiner systems and matchings in hypergraphs},
        date={1998},
        ISSN={1042-9832},
     journal={Random Structures Algorithms},
      volume={13},
      number={3-4},
       pages={335\ndash 347},
         url={https://doi.org/10.1002/(SICI)1098-2418(199807)12:4<335::
  AID-RSA2>3.0.CO;2-U},
}

\bib{Kriv96}{article}{
      author={Krivelevich, Michael},
       title={Perfect fractional matchings in random hypergraphs},
        date={1996},
     journal={Random Structures \& Algorithms},
      volume={9},
      number={3},
       pages={317\ndash 334},
}

\bib{KS06}{incollection}{
      author={Krivelevich, Michael},
      author={Sudakov, Benny},
       title={Pseudo-random graphs},
        date={2006},
   booktitle={More sets, graphs and numbers},
      series={Bolyai Soc. Math. Stud.},
      volume={15},
   publisher={Springer},
     address={Berlin},
       pages={199\ndash 262},
         url={http://dx.doi.org/10.1007/978-3-540-32439-3_10},
}

\bib{KSS04}{article}{
      author={Krivelevich, Michael},
      author={Sudakov, Benny},
      author={Szab{\'o}, Tibor},
       title={Triangle factors in sparse pseudo-random graphs},
        date={2004},
        ISSN={0209-9683},
     journal={Combinatorica},
      volume={24},
      number={3},
       pages={403\ndash 426},
         url={http://dx.doi.org/10.1007/s00493-004-0025-8},
}

\bib{MG07}{book}{
      author={{Matou\v{s}ek}, Ji\v{}r\'\i},
      author={{G\"artner}, Bernd},
       title={Understanding and using linear programming},
   publisher={Berlin: Springer},
        date={2007},
        ISBN={3-540-30697-8/pbk},
}

\bib{Nenadov18}{article}{
      author={Nenadov, Rajko},
       title={Triangle-factors in pseudorandom graphs},
        date={2018},
      eprint={1805.09710},
}

\bib{VR85}{article}{
      author={R{\"o}dl, V.},
       title={On a packing and covering problem},
        date={1985},
     journal={European J. Combin.},
      volume={6},
      number={1},
       pages={69\ndash 78},
}

\bib{SSV}{article}{
      author={Sudakov, Benny},
      author={Szab\'o, Tibor},
      author={Vu, Van},
       title={A generalization of {T}ur\'an's theorem},
        date={2005},
     journal={Journal of Graph Theory},
      volume={49},
      number={3},
       pages={187\ndash 195},
}

\end{biblist}
\end{bibdiv}

\end{document}